\documentclass[letterpaper,11pt]{article}
\usepackage{amsmath,amsfonts,amssymb,latexsym}
\usepackage{fullpage}
\usepackage{amsmath}
\usepackage{amsthm}
\usepackage{mathtools}
\usepackage{parskip}
\usepackage{graphicx}
\usepackage{placeins}
\usepackage{xcolor}
\usepackage{hyperref}
\usepackage{subcaption}
\usepackage{thmtools}
\usepackage{thm-restate}
\usepackage{dsfont}
\usepackage{placeins}
\usepackage{wrapfig}
\usepackage{todonotes}
\usepackage{enumerate}
\usepackage{float}
\usepackage{xcolor}
\usepackage{wasysym}
\usepackage{hyperref}
\usepackage{cleveref}
\usepackage{sectsty}
\usepackage{braket}
\sectionfont{\fontsize{12}{15}\selectfont}
\usepackage{caption}
\definecolor{celadon}{rgb}{0.17, 0.8, 0.69}
\definecolor{darkred}{rgb}{.7,0,0}
\definecolor{darkpink}{rgb}{0.95, 0.1, 0.8}

\declaretheorem[numberwithin = section]{theorem}
\declaretheorem[sibling = theorem]{lemma}
\declaretheorem[sibling = theorem]{corollary}
\declaretheorem[sibling = theorem]{definition}

\declaretheorem[sibling = theorem]{assumptions}
\declaretheorem[sibling = theorem]{remark}

\DeclareMathOperator*{\esssup}{ess\,sup}
\DeclareMathOperator*{\essinf}{ess\,inf}

\def\bbR{\mathbb{R}}
\def\bbC{\mathbb{C}}
\def\p{\partial}
\def\wh{\widehat}
\def\eps{\epsilon}

\def\R{\mathbb{R}}

\def\mcF{\mathcal{F}}
\def\mcG{\mathcal{G}}
\def\mcT{\mathcal{T}}

\def\la{\langle}
\def\ra{\rangle}

\renewcommand{\Omega}{\mathcal{D}}

\newcommand{\TT}{\mathcal{T}}
\newcommand{\TC}{\overline{\TT}}

\numberwithin{equation}{section}

\hypersetup{
    colorlinks=true,
    linkcolor=blue,
    filecolor=magenta,      
    urlcolor=blue,
    pdftitle={Learning Markovian Homogenized Models in Viscoelasticity},
    pdfpagemode=FullScreen,
    }

\begin{document}

\noindent 
\begin{minipage}{6.4in}
  \medskip
  \bigskip
  \begin{center}
    {\Large Learning Markovian Homogenized Models in Viscoelasticity} \\[3mm]
  \end{center}
\end{minipage}
\medskip
\begin{center}
\textbf{Kaushik Bhattacharya\footnote{Mechanical and Civil Engineering, California Institute of Technology, Pasadena, CA, USA (bhatta@caltech.edu)}, Burigede Liu\footnote{Department of Engineering, University of Cambridge, Cambridge, UK (bl377@eng.cam.ac.uk)}, Andrew Stuart\footnote{Computing and Mathematical Sciences, California Institute of Technology, Pasadena, CA, USA \\ \indent\indent (astuart@caltech.edu)}, Margaret Trautner\footnote{Computing and Mathematical Sciences, California Institute of Technology, Pasadena, CA, USA \\ \indent\indent(trautner@caltech.edu)}}
\end{center}
\medskip

\textbf{Abstract: } Fully resolving dynamics of materials with rapidly-varying features involves expensive fine-scale computations which need to be conducted on macroscopic scales. The theory of homogenization provides an approach to derive effective macroscopic equations which eliminates the small scales by exploiting scale separation. An accurate homogenized model avoids the computationally-expensive task of numerically solving the underlying balance laws at a fine scale, thereby rendering a numerical solution of the balance laws more computationally tractable. 

In complex settings, homogenization only defines the constitutive model implicitly, and machine learning can be used to learn the constitutive model explicitly from localized fine-scale simulations. In the case of one-dimensional viscoelasticity, the linearity of the model allows for a complete analysis. We establish that the homogenized constitutive model
may be approximated by a recurrent neural network (RNN) that captures the memory. The memory is encapsulated in the evolution of an appropriate finite set of internal variables, discovered through the learning process and dependent on the history of the strain. Simulations are presented which validate the theory. Guidance for the
learning of more complex models, such as arise in plasticity, by similar techniques,
is given.

\bigskip
\section{Introduction}
\label{sec:I}

Many problems in continuum mechanics lead to constitutive laws which are
history-dependent. This property may be inherent to  physics beneath the
continuum scale (for example
in plasticity \cite{simo1998numerical,simo2006computational}) or may arise from homogenization of rapdily varying continua \cite{bensoussan2011asymptotic,pavliotis2008multiscale}  
(for example in the Kelvin-Voigt (KV) model of viscoelasticity \cite{francfort1986homogenization}). When history-dependence
is present, Markovian models that capture the history dependence
are desirable for both interpretability
and computability. In some cases theory may be used
to justify Markovian models which capture this history-dependence,
but in many cases data plays a central role in finding such models.
The goal of this paper is to study data-driven methods to learn
Markovian models for history-dependence and to provide theoretical
underpinnings for understanding them.

The paper \cite{liu2022learning} adopted a data-driven learning
approach to uncovering history-dependent homogenized models arising
in crystal plasticity. However, the resulting constitutive model is
not causal and instead learns causality approximately from computations
performed at the level of the cell problem.  The paper \cite{biswriting} 
introduces a different approach, learning \emph{causal} constitutive 
models of plasticity. In order to give rigorous underpinnings to the empirical results therein, 
the present work is devoted to studying
the methodology from \cite{biswriting} in the setting of linear one-dimensional viscoelasticity. 
Here we can use theoretical understanding to justify 
and validate the methodology; we show that machine-learned homogenized models 
can accurately approximate the dynamics of multiscale models at much cheaper evaluation cost. We obtain insight into desirable choice of training data to learn the homogenized constitutive model, and we study the effect of
the multiple local minimizers which appear in the underlying optimization problem.
Furthermore, the rigorous underpinnings 
enable us to gain insight into how to test model hypotheses. We demonstrate 
that hypothesizing the correct model leads to robustness with respect to changes in time-discretization in the causal model: 
 the model can be trained at one time-step and
used at others; and the model can be trained with
one time-integration method and used with others.
In contrast, hypothesizing an incorrect 
model leads to intolerable sensitivity with respect  to the  time-step. Thus training
at one time-step and testing at other levels of resolution provides
a method for testing model form hypotheses.
We work primarily with the one-dimensional KV model for viscoelasticity for which 
the constitutive model depends only on strain and strain rate. We will also 
briefly touch on the standard linear solid (SLS) model for which 
the constitutive relation depends only on the strain and the strain history;
in so doing we show that the ideas presented extend beyond the specifics 
of the one-dimensional KV setting.

In Subsection \ref{ssec:SU} we describe the overarching mathematical framework
adopted, and Subsection \ref{ssec:LR} contains a detailed literature review. This is followed, in Subsection \ref{ssec:OC}, by a statement of our contributions and an overview of the paper. 
In Subsection \ref{ssec:N} we summarize  notation used throughout the
remainder of the paper.

\subsection{Set-up}
\label{ssec:SU}
Consider the problem of material response on an arbitrary spatial domain $\Omega \subset \bbR^d$ where the material properties vary rapidly within the domain. 
We denote by $u_{\epsilon} \in \bbR^d$ the displacement, where  $\epsilon: 0 < \epsilon \ll 1$ denotes  the  scale of the material fluctuations.
Denote by $\TT = (0,T)$ the time domain of interest. We consider continuum models which satisfy dynamical equations of the form
\begin{subequations}\label{eqns:general-dynamics}\begin{align}
   \rho\p_t^2u_{\epsilon}&=\nabla\cdot \sigma_{\epsilon}  + f, \quad  (x,t) \in\Omega \times \TT, \label{eqn:force-balance} \\
    \sigma_{\epsilon}(x,t)  &= \Psi^{\dagger}_{\epsilon}\left(\nabla u_{\epsilon}(x,t), \p_t \nabla u_{\epsilon}(x,t), \left\{\nabla u_{\epsilon}(x,\tau)\right\}_{\tau \in \TC},x,t\right),
    \quad  (x,t) \in\Omega \times \TT, \label{eqn:const-gen}\\
    u_{\epsilon}  &= u^*, \quad \p_t u_{\epsilon}  = v^*, \quad  (x,t) \in\Omega \times \{0\}, \\
    u_{\epsilon}  &= 0, \quad  x  \in \p\Omega, \quad  (x,t) \in\partial \Omega \times \TT.
\end{align}
\end{subequations}
From these equations we seek $u_{\epsilon}: \Omega \times \TT \mapsto \bbR^d$.
Equation \eqref{eqn:force-balance} is the balance equation with inertia
term $\rho\p_t^2 u_{\epsilon}$ for known parameter $\rho \in \bbR^+$, resultant stress term $\nabla\cdot(\sigma_{\epsilon})$ where $\sigma_{\epsilon} \in \bbR^{d \times d}$ is the internal stress tensor, and known external forcing $f \in \bbR^d$; equations (\ref{eqns:general-dynamics}c,\ref{eqns:general-dynamics}d) specify the
initial and boundary data for the displacement. Equation \eqref{eqn:const-gen} is the constitutive law relating properties of the the strain $\nabla u_{\epsilon}$ to the stress $\sigma_{\epsilon}$ via map $\Psi^\dagger_{\epsilon}$. In this paper we will consider this model
with inertia ($\rho>0$) and without inertia ($\rho \equiv 0$).

\subsubsection{Constitutive Model}
\label{sssec:CM}

The constitutive model is defined by
$\Psi^{\dagger}_{\epsilon}:\R^{d\times d} \times \R^{d\times d} \times
C(\TC; \R^{d\times d}) \times \Omega \times \TT \to \R^{d \times d}.$
It takes time $t$ as input, which
enables the stress at time $t$ to be expressed only
in terms of the strain history up to time $t$, 
$\left\{\nabla u_{\epsilon}(x,\tau)\right\}_{\tau=0}^t$, and not on the
future of the strain for $\tau \in (t,T].$ It takes $x$ as input to allow
for material properties which depend on the rapidly varying $x/\epsilon;$ 
it is also possible to allow for material properties which exhibit 
additional dependence on the slowly varying $x$, but we exclude this case
for simplicity.

Such constitutive models include a variety of plastic, viscoelastic and viscoplastic materials. In this paper we focus mainly on the one-dimensional KV viscoelastic setting in order to highlight
ideas; in this case $\Psi^\dagger_{\epsilon}$ is independent of
the history of the strain. However, we will briefly demonstrate that
similar concepts relating to the learning of constitutive models
also apply to the case of an SLS,
for which $\Psi^\dagger_{\epsilon}$ is independent of the strain rate
but does depend on the history of the strain; the SLS
contains the KV and Maxwell models of viscoelasticity
as special cases. Furthermore, the paper \cite{biswriting} includes 
empirical evidence that similar concepts relating to the learning of constitutive models also apply in the setting of Maxwell models of
plasticity.


\subsubsection{Homogenized Constitutive Model}
\label{sssec:HCM}

The goal of homogenization is to find constitutive models which eliminate small-scale dependence. To this end, we first discuss the form of a general homogenized problem:
we seek the equation satisfied by $u_0$, an appropriate limit of $u_{\epsilon}$ as $\epsilon \to 0.$ Then map $\Psi^\dagger_0$ defines the constitutive relationship in a homogenized model for $u_0$ of the form
\begin{subequations}\label{eqns:general-dynamics-homog}\begin{align}
   \rho\p_t^2u_{0}&=\nabla\cdot \sigma_{0}  + f, \quad  (x,t) \in\Omega \times \TT, \label{eqn:force-balance-homog} \\
    \sigma_{0}(x,t)  &= \Psi^{\dagger}_{0}\left(\nabla u_{0}(x,t), \p_t \nabla u_{0}(x,t), \left\{\nabla u_{0}(x,\tau)\right\}_{\tau \in \TC},t\right),
    \quad  (x,t) \in\Omega \times \TT, \label{eqn:const-gen-homog}\\
    u_{0}  &= u^*, \quad \p_t u_{0}  = v^*, \quad  (x,t) \in\Omega \times \{0\}, \\
    u_{0}  &= 0, \quad   \quad  (x,t) \in \p \Omega \times \TT.
\end{align}
\end{subequations}
The key assumption about this homogenized model is that parameter $\epsilon$
no longer appears. Furthermore, since we assumed that the multiscale model
material properties depend only on the rapidly varying scale $x/\epsilon$
and not on $x$, we have that $\Psi^{\dagger}_{0}$ is independent of $x$:
$\Psi^{\dagger}_{0}:\R^{d\times d} \times \R^{d\times d} \times
C(\TC; \R^{d\times d}) \times \TT \to \R^{d \times d}.$

If the homogenized model is identified correctly then dynamics under the multiscale model $\Psi_{\epsilon}^\dagger$, i.e. $u_{\epsilon}$, can approximated by dynamics under the homogenized model $\Psi_0^\dagger$, i.e. $u_0$. 
This, potentially, facilitates cheaper computations since
length-scales of size $\epsilon$ need not be resolved.
We observe, however, that for KV
viscoelasticity, the homogenized model contains history-dependence (memory)
even though the multiscale model does not. Markovian history-dependence is desirable for two primary reasons: first, Markovian models
encode conceptual understanding, representing the history dependence
in a compact, interpretable form; second, Markovian expression reduces computational
cost from ${\mathcal O}(|\TT|^2)$ in the general memory case to
${\mathcal O}(|\TT|)$ in the Markovian case.
In the general media setting, for a multitude of models in viscoleasticity,
viscoplasticity, and plasticity, the homogenized model will depend on the memory in a non-Markovian manner. However, it is interesting to determine situations
in which accurate Markovian approximations can be found.

\subsubsection{Markovian Homogenized Constitutive Model}
\label{sssec:MHCM}

We will seek to identify \emph{internal variables} $\xi$ and functions
$\mathcal{F}, \mathcal{G}$ such that, for $B \in C(\TT;\R^{d \times d})$, $\Psi^{\dagger}_{0}$
can be approximated by
\begin{subequations}
\label{eq:MARKOV}
\begin{align}
\Psi_{0}\left(B(t), \p_t B(t), \left\{B(\tau)\right\}_{\tau \in \TC},t\right) &=\mathcal{F}\left(B(t),\p_t B(t),\xi(t)\right),\\
\p_t \xi(t)&=\mathcal{G}\left(\xi(t),B(t)\right).
\end{align}
\end{subequations}
Note that $\xi$ carries the history dependence on $B$ through its Markovian
evolution. We assume that $\xi \in C(\TT;\R^r)$ for some integer $r$ and 
hence that $\mathcal{F}:\R^{d \times d} \times \R^{d \times d} \times \R^r \to \R^{d \times d}$ and that $\mathcal{G}:\R^r \times \R^{d \times d} \to \R^r.$
In dimension $d>1$ there will be further symmetries that should be built
into the model, but as the concrete analysis in this paper is in
dimension $d=1$ we will not detail these symmetries here.

In general such a Markovian model can only {\em approximate} the true model,
and the nature of the physics leading to a good approximation will
depend on the specific continuum mechanics problem. 
To determine $\mathcal{F}$ and $\mathcal{G}$ in practice we will parameterize them as neural
networks, which enables us to use general purpose optimization software to
determine suitable values of the parameters. In doing so we
identify an operator class $\Psi_0\left(\cdot \; ; \theta \right)$ and parameter space $\Theta$ such that, for some judiciously chosen $\theta^*\in \Theta$, $\Psi_0\left(\cdot \; ; \theta^* \right)\approx \Psi_{\epsilon}^\dagger.$

In this paper
we will concentrate on justifying a Markovian homogenized approximation
in the context of one-dimensional KV viscoelasticity.
Our justification will use theory that is specific to one dimensional 
linear viscoelasticity, and we demonstrate that the approach also works for the general SLS,
which includes the KV model as a particular limit.
Furthemore, the paper \cite{biswriting} contains evidence 
that the ideas we develop apply
beyond the confines of one dimensional  linear viscoelasticity
and into nonlinear plasticity in higher spatial dimensions.

The specific property of one-dimensional viscoelasticity that
we exploit to underpin our analysis (and which applies
to the SLS and therefore also to the KV model) is that, for piecewise-constant media, the homogenized model has a memory term which can be
represented in a Markovian way. Therefore, to justify our strategy of
approximating by Markovian models we will: first, approximate the 
rapidly-varying medium by a piecewise-constant rapidly-varying medium;
secondly, homogenize this model to find a Markovian description; and 
finally, demonstrate how the Markovian description can be learned from 
data at the level of the unit cell problem, using neural networks.
For more general problems 
we anticipate a similar justification
holding, but with different specifics leading to the existence of
good approximate Markovian homogenized models. 
The benefit of the one-dimensional viscoelastic
setting is that, through theory, we obtain underpinning insight into the
conceptual approach more generally, and in particular for plasticity;
this theory underpins the numerical experiments which follow.

\subsection{Literature Review}
\label{ssec:LR}

The continuum assumption for physical materials approximates the inherently particulate nature of matter by a continuous medium and thus allows the use of partial differential equations to describe response dynamics.   We refer the reader to \cite{gurtin_cont_mech,spencer_cont_mech,gonzalez2008first} for a general background.  In continuum mechanics, the governing equations are derived by combining universal balance laws of physics (balance of mass, momenta and energy) with a constitutive relation that describes the properties of the material being studied.  This is typically specified as the relation between a dynamic quantity like stress or energy and kinematic quantities like strain and its history.  The constitutive relation of many materials are history dependent, i.e., the state of stress at an instant depends on the history of deformation.  It is common in continuum mechanics to incorporate this history dependence through the introduction of internal variables.  We refer the reader to  \cite{rice1971inelastic} for a systematic formulation of internal variable theories.

Of particular interest in this work are viscoelastic materials.  We refer the reader to  \cite{christensen_visco,lakes_visco} for a general background.  In viscoelastic materials, the state of stress at any instant depends on the strain and its history.  There are various models where the stress depends only on strain and strain rate (Kelvin-Voigt), internal variables (standard linear solids), convolution kernels and fractional time derivatives.

While constitutive relations were traditionally determined empirically, more recently there has been a systematic attempt to understand them from more fundamental solids, and this has given rise to a rich activity in multiscale modeling of materials \cite{fish2010multiscale,tadmor,van2020roadmap}.
Materials are heterogeneous on various length (and time) scales, and it is common to use different theories to describe the behavior at different scales \cite{phillips2001crystals}.  The goal of multiscale modeling of materials is to use this hierarchy of scales to understand the overall constitutive behavior at the scale of applications.  The hierarchy of scales include a number of continuum scales.  For example, a composite material is made of various distinct materials arranged at a scale that is small compared to the scale of application, but large enough compared to an atomistic/discrete scale that their behavior is adequately described by continuum mechanics.  Or, for example, a polycrystal, is made of a large collection of grains (regions  of identical anisotropic material but with differing orientation) that are small compared to the scale of application but large enough for a continuum theory.  Homogenization theory leverages the assumption of the separation of scales to average out the effects of fine-scale material variations.  To estimate macroscopic response of heterogeneous materials, asymptotic expansion of the displacement field yields a set of boundary value problems whose solution produces an approximation that does not depend on the microscale \cite{bensoussan2011asymptotic,sanchez1980non}. The fundamentals of asymptotic homogenization theory are well-established \cite{pavliotis2008multiscale,cioranescu_donato,allaire1992homogenization}.  
Milton \cite{milton_book} provides a comprehensive survey of the effective or homogenized properties.  

Homogenization in the context of viscoelasticity was initiated by Sanchez-Palencia (\cite{sanchez1980non}  Chapter 6), who pointed out that the homogenization of a Kelvin-Voigt model leads to a model with fading memory.  Further discussion of homogenization theory in (thermo-)viscoelasticity can be found in
Francfort and Suquet \cite{francfort1986homogenization}, and a detailed discussion of the overall behavior including memory in Brenner and Suquet \cite{brenner2013overall}.  A broader discussion of homogenization and memory effects can be found in Tartar \cite{tartar_1990}.  It is now understood that homogenization of various constitutive models gives rise to memory.

As noted above, according to homogenization theory, the macroscopic behavior depends on the solution of a boundary value problem at the microscale.  Evaluating the macroscopic behavior by the solution of a boundary value problem computationally leads to what has been called computational micromechanics \cite{zohdi_introduction_2005}.  These often involve periodic boundary conditions, and fast Fourier transform-based methods are widely used since Moulinec and Suquet \cite{moulinec1998numerical} (see \cite{mishra_review,moulinec_review} for recent summaries).  While these enable us to compute the macroscopic response for a particular deformation history, one needs to repeat the calculation for all possible deformation histories.

Therefore, recent work in the mechanics literature addresses the issue of learning 
homogenized constitutive models from  computational data
\cite{mozaffar2019deep,wu_rnn,liu2022learning,biswriting} or experimental data \cite{as2022mechanics}. This learning problem
requires determination of maps that take as inputs functions describing
microstructural properties, and leads in to the topic of operator learning.


Neural networks  on finite-dimensional spaces have proven effective at solving longstanding computational problems. A natural question which arises from this success is whether neural networks can be used to solve partial differential equations. From a theoretical standpoint, the classical notion of neural networks as maps between finite-dimensional spaces is insufficient. Indeed, a differential operator is a map between infinite-dimensional spaces. Similar to the case of finite-dimensional maps, universal approximation results for nonlinear operator learning have been developed \cite{chen1995universal}. In one approach to operator learning, model reduction methods are applied to the operator itself. In this setting, a low-dimensional approximation space is assumed, and the operator approximation is constructed via regression using the latent basis \cite{peherstorfer2016data,qian2020lift}. In a second method, classical dimension reduction maps are applied to the input and output spaces, and an approximation map is learned between these finite-dimensional spaces \cite{bhattacharya2020model}. An important result of this method is its mesh-invariance property. Data for any operator-learning problem must consist of a finite discretization of the true input and output functions. One side effect of this practical fact is that some existing methods for operator learning depend critically on the choice of discretization. Mesh-invariant methods are desirable for both practical and theoretical purposes. Practically, it would be expensive to train a new model to accomodate a finer data resolution. From a theory standpoint, since the true map between infinite-dimensional spaces is inherently resolution-independent, the operator approximation ought to have this property as well. Mesh-invariance allows the operator approximation to be applied in the use of various numerical approximation methods for PDEs, which is of particular importance when the operator is being used as a surrogate model for the overall PDE system \cite{li2020neural}.

Surrogate modeling bypasses expensive simulation computations by replacing part of the PDE with a neural network. In several application settings, including fluid flows and solid mechanics computations, surrogate modeling has met empirical success in approximating the true solution \cite{sun2020surrogate,haghighat2021physics}. This work continues to use ideas from physics-informed machine learning; in \cite{haghighat2021physics} in particular, the differential operator is incorporated into the cost function via automatic differentiation. Other work in surrogate modeling for solid mechanics proposes a hierarchical network architecture that mimics the heterogeneous material structure to yield an approximation to the homogenized solution \cite{liu2019deep}.

In this paper we use an RNN as a surrogate model for the constitutive relation on the microscale. The RNN can then be used to evaluate the forward dynamic response on the microscale cells, whose results are combined with traditional numerical approximation methods to yield the macroscale response. Furthermore, the RNN that we train at
a particular time-discretization is also accurate when used at other time-discretizations, if the correct model form is proposed. However, 
the RNN is trained for a particular choice of material parameters, and the resulting model cannot be used at different material parameter values; simultaneously
learning material parameter dependence is left for future work.

\subsection{Our Contributions and Paper Overview}
\label{ssec:OC}
Our contributions are as follows:
\begin{enumerate}
    \item We provide theoretical underpinnings for the discovery of 
    Markovian homogenized models in viscoelasticity and 
plasticity; the methods use data generated by solving cell problems to learn
constitutive models of the desired form.
    \item We prove that in the one-dimensional Kelvin-Voigt (KV) setting, any solution of the multiscale problem can be approximated by solution of a homogenized problem with Markovian structure.
    \item We prove that the constitutive model for this Markovian homogenized  system can be approximated by a recurrent neural network (RNN) operator,
    learnt from data generated by solving the appropriate cell problem.
    \item We provide simulations which numerically demonstrate the accuracy of
    the learned Markovian model.
    \item We offer guidance for the application of this methodology, beyond the
    setting of one-dimensional viscoelasticity, into multi-dimensional plasticity.
    \end{enumerate}

In Section \ref{sec:KV}, we formulate the KV viscoelastic problem and its homogenized solution. In Section \ref{sec:main_theorems}, we present our
main theoretical results, addressing contributions 1., 2. and 3.; 
these are in the setting of one-dimensional 
KV viscoelasticity. We prove that solution of the multiscale problem
can be approximated by solution of a homogenized Markovian memory-depedent 
model that does not depend on small scales, and we prove that an RNN can 
approximate the constitutive law for this homogenized problem. Section 
\ref{sec:Numerics} contains numerical experiments which address
contributions 4. and 5.; the start of that section details the findings.

\subsection{Notation}
\label{ssec:N}
We define notation that will be useful throughout the paper. Recall
that $\mathcal{T} = (0,T)$ is the time domain of interest, and in the
one-dimensional setting we let $\mathcal{D} = [0,L]$ be the spatial domain.
Let $\la \cdot, \cdot \ra$ and $\|\cdot\|$ denote the standard inner product and induced norm operations on the Hilbert space $L^2(\mathcal{D};\bbR)$. Additionally, let $\|\cdot \|_{\infty}$ denote the $L^{\infty}(\mathcal{D};\bbR)$ norm. 

It will also be convenient to define the $\xi-$dependent quadratic form
\begin{equation}\label{eqn:ipq}q_{\xi}(u,w) := \int_{\mathcal{D}} \xi(x) \frac{\p u(x)}{\p x}\frac{\p w(x)}{\p x}\; dx
\end{equation}
for arbitrary $\xi \in L^{\infty}\bigl(\mathcal{D};(0,\infty)\bigr)$; furthermore we define
\begin{equation}\label{eqn:xi_prop1}
    \xi^+ := \esssup_{x \in \mathcal{D}}\xi(x) < \infty
\end{equation}
and 
\begin{equation}\label{eqn:xi_prop2}
    \xi^- := \essinf_{x \in \mathcal{D}} \xi(x) \geq 0.
\end{equation} 
In this paper we always work with $\xi$ such that $\xi^- >0$. 
Under these assumptions $q_{\xi}(\cdot,\cdot)$ 
defines an inner-product, and we can define the following norm
$$\|u\|_{H_0^1, \xi}^2 := q_{\xi}(u,u)$$
from it; note also that we may define a norm on $H^1_0(\mathcal{D};\R)$ by
$$\|u\|_{H_0^1}^2 := q_{\mathds{1}}(u,u),$$
where $\mathds{1}(\cdot)$ is the function in $L^{\infty}\bigl(\mathcal{D};(0,\infty)\bigr)$ taking value $1$ in $D$ a.e.\,.
The resulting norms are all equivalent on the space $H^1_0(\mathcal{D};\bbR)$; this is a consequence of the
following lemma:

\begin{lemma} \label{lem:h01equi}
    For any $\xi_1, \xi_2 \in L^{\infty}\bigl(\mathcal{D};(0,\infty)\bigr)$ satisfying properties (\ref{eqn:xi_prop1}) and (\ref{eqn:xi_prop2}), the norms $\|u\|_{H_0^1,\xi_1}$ and $\|u\|_{H_0^1, \xi_2}$ are equivalent in the sense that $$\frac{\xi_2^-}{\xi_1^+} \|u\|_{H_0^1, \xi_1}^2 \leq \|u\|_{H_0^1, \xi_2}^2  \leq \frac{\xi_2^+}{\xi_1^-} \|u\|_{H_0^1, \xi_1}^2. $$
\end{lemma}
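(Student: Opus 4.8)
The plan is to reduce the norm equivalence to two-sided pointwise bounds on the weights combined with monotonicity of the Lebesgue integral. First I would record the essential inequalities implied by the definitions \eqref{eqn:xi_prop1} and \eqref{eqn:xi_prop2}: for each $i \in \{1,2\}$ we have $\xi_i^- \leq \xi_i(x) \leq \xi_i^+$ for almost every $x \in \mathcal{D}$. Because we always work with $\xi_i^- > 0$, the reciprocal $1/\xi_1$ is then essentially bounded, with $1/\xi_1^+ \leq 1/\xi_1(x) \leq 1/\xi_1^-$ a.e.; this strict positivity is the only place that assumption \eqref{eqn:xi_prop2} (in its strengthened form $\xi^->0$) is genuinely used, as it both licenses the division by $\xi_1$ and keeps the constants finite.

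The key step is to interpolate the weighted form $q_{\xi_2}$ against $q_{\xi_1}$ by inserting the ratio $\xi_2/\xi_1$. For almost every $x \in \mathcal{D}$ I would write $\xi_2(x)\,|\p_x u(x)|^2 = \bigl(\xi_2(x)\,\xi_1(x)^{-1}\bigr)\,\bigl(\xi_1(x)\,|\p_x u(x)|^2\bigr)$ and bound the scalar factor $\xi_2(x)\,\xi_1(x)^{-1}$ above by $\xi_2^+/\xi_1^-$ and below by $\xi_2^-/\xi_1^+$, using the pointwise bounds from the previous paragraph. Since the remaining factor $\xi_1(x)\,|\p_x u(x)|^2$ is nonnegative, integrating these two a.e.\ inequalities over $\mathcal{D}$ and invoking monotonicity of the integral yields
\[
\frac{\xi_2^-}{\xi_1^+}\, q_{\xi_1}(u,u) \;\leq\; q_{\xi_2}(u,u) \;\leq\; \frac{\xi_2^+}{\xi_1^-}\, q_{\xi_1}(u,u),
\]
which is precisely the claim once we recall the definition $\|u\|_{H_0^1,\xi}^2 = q_{\xi}(u,u)$ from \eqref{eqn:ipq}. (An equivalent two-step route would first compare each weighted form to the unweighted Dirichlet energy $q_{\mathds{1}}(u,u) = \|u\|_{H_0^1}^2$ and then chain the resulting inequalities; the one-shot interpolation above is marginally cleaner and avoids introducing the intermediate constant.)

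I do not anticipate any real obstacle here: the statement is a direct consequence of the essential two-sided control on the weights and nonnegativity of the integrand, and the constants $\xi_2^-/\xi_1^+$ and $\xi_2^+/\xi_1^-$ emerge automatically from the pointwise ratio bounds rather than requiring any optimization. The only point deserving a moment's care is bookkeeping of the almost-everywhere qualifiers: all the weight comparisons hold only a.e., so the argument must pass through monotonicity of the integral rather than a pointwise comparison of the integrated quantities, and one should note that swapping the roles of $\xi_1$ and $\xi_2$ reproduces the reverse pair of inequalities, confirming internal consistency and that the displayed constants are sharp in the sense dictated by the essential extrema.
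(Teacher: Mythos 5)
Your proof is correct and takes essentially the same approach as the paper: both arguments rest on the a.e.\ two-sided bounds $\xi_i^- \leq \xi_i(x) \leq \xi_i^+$ and monotonicity of the Lebesgue integral, yielding the same constants. The only cosmetic difference is that the paper chains each weighted form through the unweighted Dirichlet energy $q_{\mathds{1}}(u,u)$ (the two-step route you mention parenthetically), whereas you insert the ratio $\xi_2(x)/\xi_1(x)$ directly; the two are trivially equivalent.
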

\begin{proof}
For $i = 1,2:$
\begin{equation*}
    \xi_i^- \int_0^1 \left|\frac{\p u }{\p x}\right|^2 dx \leq \int_0^1 \xi_i(x) \left|\frac{\p u}{\p x}\right|^2 dx \leq  \xi_i^+ \int_0^1 \left|\frac{\p u}{\p x}\right|^2 dx.
\end{equation*}
The result follows. 
\end{proof}

We denote by $V$ the Hilbert space $H_0^1(\mathcal{D};\bbR)$ noting
that, as a consequence of the preceding lemma, we may use
$q_{\xi}(u,w)$ as the inner product on this space for any
$\xi$ satisfying \eqref{eqn:xi_prop1} and \eqref{eqn:xi_prop2}. 
We also define $$ \mathcal{Z} = L^{\infty}(\mathcal{T};L^2(\mathcal{D};\bbR)),
\quad \mathcal{Z}_2 = L^{2}(\mathcal{T};L^2(\mathcal{D};\bbR))$$ 
with norms 
$$\|r\|_{\mathcal{Z}} = \esssup_{t \in \mathcal{T}}(\|r(\cdot,t)\|), \quad
\|r\|_{\mathcal{Z}_2} = \Bigl(\int_0^{\mathcal{T}}\|r(\cdot,t)\|^2 dt \Bigr)^{\frac12}.$$
We note that $\mathcal{Z}$ is continuously embedded into $\mathcal{Z}_2.$

\section{One-Dimensional Kelvin-Voigt Viscoelasticity}\label{sec:KV}

This paper is focused on one-dimensional KV viscoelasticity because 
the model is amenable to rigorous analysis. The resulting analysis 
sheds light on the learning of constitutive models more generally. 
In Section \ref{subsec:KV-eqns} we present the equations for the model 
in an informal fashion and in a weak form suitable for analysis; 
and in Section \ref{subsec:KV-homog} we homogenize the model and 
define the operator defining the effective constitutive model.

\subsection{Governing Equations and Weak Form} \label{subsec:KV-eqns}
The one-dimensional KV model for viscoelasticity postulates that stress is affine in the strain and strain rate, with affine transformation dependent on 
the (typically spatially varying) material properties. For a multiscale material varying with respect to $x/\epsilon$ we thus have the following
definition of $\Psi^{\dagger}_{\epsilon}$ from \eqref{eqns:general-dynamics},
in the one-dimensional KV model:
$$\sigma_{\epsilon} = E_{\epsilon} \p_x u_{\epsilon} + \nu_{\epsilon}\p^2_{xt}u_{\epsilon}$$ where $E_{\epsilon}(x) = E\left(\frac{x}{\epsilon}\right)$ and $\nu_{\epsilon}(x) = \nu\left(\frac{x}{\epsilon}\right)$ are rapidly-varying material elasticity and viscosity, respectively. Both $E$ and $\nu$ are assumed to be 1-periodic. Then equations \eqref{eqns:general-dynamics} without momentum ($\rho \equiv 0$) become 
\begin{subequations}\label{eqn:main1d}
\begin{align}
    -&\p_x\left(E_{\epsilon}\p_x u_{\epsilon} + \nu_{\epsilon}\p^2_{xt} u_{\epsilon}\right) = f, \quad (x,t) \in \p\Omega \times \TT,\label{eqn:main1d1}\\
    &u_{\epsilon}(0,x)  = u^*, \quad \p_t u_{\epsilon}(0,x)  = v^*, \quad x \in \p\Omega,\\
    &u_{\epsilon}(x,t)  = 0, \quad  x  \in \p\Omega.
\end{align}
\end{subequations}
Any classical solution to equations \eqref{eqn:main1d} will also solve the corresponding \textit{weak form}: find $u_{\epsilon} \in \mathcal{C}(\mathcal{T};V)$ such that 
\begin{equation}\label{eqn:gen_q}
    q_{\nu_{\epsilon}}(\p_t u_{\epsilon}, \varphi) + q_{E_{\epsilon}}(u_{\epsilon},\varphi) = \la f, \varphi \ra
\end{equation}
for all test functions $\varphi \in V$. 
\subsection{Homogenization}\label{subsec:KV-homog}
In the inertia-free setting $\rho=0$ we perform homogenization to eliminate 
the dependence on the small scale $\epsilon$ in \eqref{eqn:main1d}.
First, we take the Laplace transform of \eqref{eqn:main1d},  which gives, for Laplace parameter $s$
and with the hat symbol denoting Laplace transform,
\begin{align*}
    &-\p_x\left((E_{\eps}+\nu_{\eps}s)\p_x\wh{u}_{\eps}\right) = \wh{f},
    \quad x \in \Omega,\\
   &\wh{u}_{\epsilon}(x,s)  = 0, \quad  x  \in \p\Omega.
\end{align*}
The initial condition $u_{\epsilon}(0,x)  = u^*$ is applied
upon Laplace inversion. Since $\eps \ll 1$, we may apply standard techniques
from multiscale analysis \cite{bensoussan2011asymptotic,pavliotis2008multiscale} and seek a solution in the form
\begin{equation*}
    \widehat{u_{\epsilon}} = \widehat{u}_0 + \epsilon \widehat{u}_1 + \epsilon^2 \widehat{u}_2 + \dots\,.
\end{equation*}
For convenience, define 
$\wh{a}(s,y) = E(y) + \nu(y) s$. Note that $\wh{a}(s,\cdot)$ is $1$-periodic. The leading order term in our approximation, $\wh{u}_0$, solves the following uniformly elliptic PDE with Dirichlet boundary conditions:
\begin{subequations}\label{eqn:homogen}
    \begin{equation}
        -\p_x\left(\widehat{a}_0(s)\p_x \widehat{u}_0\right) = \widehat{f} \quad \text{for}\quad x \in \mathcal{D},
    \end{equation}
    \begin{equation}
        \widehat{u}_0 = 0 \quad \text{for} \quad  x\in \p\mathcal{D}.
    \end{equation}
\end{subequations}
Here the coefficient $\wh{a}_0$ is given by 
\begin{equation*}
    \wh{a}_0 = \int_0^1\left(\wh{a}(y) + \wh{a}(y)\p_y\chi(y)\right)\; dy
\end{equation*}
and $\chi(y): [0,1] \to \bbR$ satisfies the \textit{cell problem}
\begin{subequations} \label{eq:cell}
\begin{equation}
    -\p_y\left(\wh{a}(y)\p_y\chi(y)\right) = \p_y \wh{a}(y)
\end{equation}
\begin{equation}
\chi \text{ is $1$-periodic}, \quad \int_{0}^1\chi(y)dy = 0.
\end{equation}
\end{subequations}
Using this, the coefficient $\wh{a}_0$ can be computed explicitly as the harmonic average of the original coefficient $\wh{a}$ \cite{pavliotis2008multiscale}[Subsection 12.6.1]:
\begin{align}\label{eqn:a_0}
    \wh{a}_0(s) = \left\la \wh{a}(s)^{-1}\right\ra^{-1} = \left(\int_0^1 \frac{dy}{s\nu(y) + E(y)}\right)^{-1},
    \end{align}
where $\la \cdot \ra$ denotes spatial averaging over the unit cell. 

Equations \eqref{eqn:homogen} indicate that the homogenized 
map $\Psi_0^\dagger$ appearing in \eqref{eqns:general-dynamics-homog} is, 
for one-dimensional linear viscoelasticity, defined from 
\begin{equation}\label{eqn:Lap_inv}
\Psi^{\dagger}_{0}\left(\p_{x} u_{0}(x,t), \p_{xt} u_{0}(x,t), \left\{\p_{x} u_{0}(x,\tau)\right\}_{\tau \in \TC},t\right)=\mathcal{L}^{-1}\Bigl(\wh{a}_0(s)\p_x\wh{u}_0\Bigr);
\end{equation}
here $\mathcal{L}^{-1}$ denotes the inverse Laplace transform.
Note that \eqref{eqn:a_0} shows that $\wh{a}_0$ grows linearly in $s \to \infty$ and computing
the constant term in a regular power series expansion at $s=\infty$ shows that we may write
$$\wh{a}_0=\nu' s + E'+\wh{\kappa}(s)$$
where $\wh{\kappa}(s)$ decays to $0$ as $s \to \infty$.
Here 
$$\nu'=\Bigl(\int_0^1 \frac{1}{\nu(y)} dy \Bigr)^{-1}, \quad  E'=\Bigl(\int_0^1 \frac{E(y)}{\nu(y)^2} dy \Bigr)\Big/\Bigl(\int_0^1 \frac{1}{\nu(y)} dy \Bigr)^{2}.$$
Details is presented in Appendix \ref{assec:Enu_s_limit}. Laplace inversion of $\wh{a}_0(s)\p_x\wh{u}_0$ then yields the conclusion that
\begin{equation}\label{eqn:homogc_general}
    \Psi_0^{\dagger}(\p_x u_0(t), \p^2_{xt} u_0(t), \{\p_x u_0(\tau)\}_{\tau \in \overline{\mathcal{T}}}. t; \theta) = E' \p_x u_0(t) + \nu'\p^2_{xt} u_0(t)  
+ \int_0^t \kappa(t-\tau) \p_x u_0(\tau) \; d\tau.
\end{equation}

\begin{remark}\label{rem:inertia}
When $\rho = 0$, the homogenized solution provably approximates $u_{\eps}$ in the 
$\eps \to 0$ limit; see Theorem \ref{thm:FS}. However, although we derived it with inertia set to zero, 
the homogenized solution given by equation \eqref{eqns:1dVE_PDE} is also valid when the inertia term $\rho\p_t^2u_\epsilon$ generates contributions which are $\mathcal{O}(1)$ with respect to $\eps.$
\end{remark}

The homogenized PDE for one-dimensional viscoelasticity follows by combining equations \eqref{eqns:general-dynamics-homog} with equation \eqref{eqn:homogc_general} to give
\begin{subequations}\label{eqns:1dVE_PDE}\begin{align}
   \rho\p_t^2u_{0}&=\nabla\cdot \sigma_{0}  + f, \quad  (x,t) \in\Omega \times \TT, \label{eqn:force-balance-homog} \\
    \sigma_{0}(t)  &=  E' \p_x u_0(t) + \nu'\p^2_{xt} u_0(t)  
+ \int_0^t \kappa(t-\tau) \p_x u_0(\tau) \; d\tau
    \quad  (x,t) \in\Omega \times \TT, \label{eqn:const-gen-homog}\\
    u_{0}  &= u^*, \quad \p_t u_{0}  = v^*, \quad  (x,t) \in\Omega \times \{0\}, \\
    u_{0}  &= 0, \quad   \quad  (x,t) \in \p \Omega \times \TT.
\end{align}
\end{subequations}

The price paid for homogenization 
is dependence on the strain history.
We will show in the next section, however, that we can 
approximate the general homogenized map with one in 
which the history-dependence is expressed in a Markovian manner.

\section{Main Theorems: Statement and Interpretation}\label{sec:main_theorems}

In this section we present theoretical results of three types. 
Firstly, in Subsection \ref{subsec:PC-approx}, we show that the solution 
$u_{\epsilon}$ to equation \eqref{eqn:main1d} is Lipschitz when viewed
as a mapping from the unit cell material properties $E(\cdot), \nu(\cdot)$
in $L^\infty$ into $\mathcal{Z};$ hence, an $\mathcal{O}(\delta)$ approximation of $E,\nu$ by piecewise-constant functions leads to an $\mathcal{O}(\delta)$ approximation of $u_{\epsilon}.$ Second, in Subsection \ref{subsec:homog}, we demonstrate that the homogenized  model based on piecewise-constant material properties can be represented in a Markovian fashion by introducing \emph{internal
variables}; hence, combining with the first point, we have a mechanism to
approximate $u_{\epsilon}$ by solution  a Markovian homogenized model. 
Third, in Subsection \ref{subsec-RNN}, we show the existence of neural 
networks which provide arbitrarily good approximation 
of the constitutive law arising in the Markovian homogenized model; this
suggests a model class within which to learn homogenized, Markovian
constitutive models from data. Subsection \ref{ssec:RNN_Optimization} 
establishes our framework for the optimization methods used to learn such
constitutive models; this framework is employed in the 
subsequent Section \ref{sec:Numerics}.

\begin{assumptions}\label{ass:stab}
We will make the following assumptions on $E$, $\nu$, and $f$ throughout:
\begin{enumerate}
    \item $f \in L^2(\mathcal{D};\bbR)$ for all $t \in \TC $; thus $\|f\|_{\mathcal{Z}} < \infty;$
    \item $E^+, \nu^+ < \infty$, and $E^-, \nu^- > 0.$
\end{enumerate}
\end{assumptions}

Note that $E^+ = E_{\epsilon}^+$ and $\nu^+ = \nu_{\epsilon}^+$, 
so we will drop the $\epsilon$ superscript in this notation.

\subsection{Approximation by Piecewise Constant Material}\label{subsec:PC-approx}
Consider \eqref{eqn:main1d} with continuous material properties $E$ and $\nu$. We show in Theorem \ref{thm:PC_approx} that we can approximate the solution $u_{\eps}$ to this system by a solution $u_{\eps}^{PC}$ which solves \eqref{eqn:main1d} with suitable piecewise-constant  material properties $E^{PC}$ and $\nu^{PC}$, in such a way that $u_{\eps}$ and $u_{\eps}^{PC}$ are close. To this end we 
make  precise the definition of piecewise-constant material properties.

\begin{definition}[\bf{Piecewise Constant}]\label{def:piecewise} A material is {\em piecewise constant} on the unit cell with $L$ pieces if the elasticity function $E(y)$ and the viscosity function $\nu(y)$ both take constant values on $L$ intervals $[0,a_1),[a_1,a_2),\dots,[a_{L-1},1]$. In particular, $E(y)$ and $\nu(y)$ have discontinuities only at the same $L-1$ points in the unit cell. We use
terminology $L-$piecewise constant to specify the number of pieces.
\end{definition}
\begin{remark} The situation in which $E(y)$ and $\nu(y)$  have discontinuities at different values of $y \in (0,1)$ can be reduced to the case in Definition \ref{def:piecewise} by increasing the value of $L$.
\end{remark}

\begin{restatable}[{\bf Piecewise-Constant Approximation}]{theorem}{PCapprox}\label{thm:PC_approx}
Let $E$ and $\nu$ be piecewise continuous functions, with a finite number of discontinuities,  satisfying Assumptions \ref{ass:stab}; let $u_{\epsilon}$ be the corresponding solution to \eqref{eqn:main1d}.
Then, for any $\delta > 0$, there exist
piecewise constant $E^{PC}$ and $\nu^{PC}$ (in the sense of
Definition \ref{def:piecewise}) such that solution $u^{PC}_{\epsilon}$ 
of equations \eqref{eqn:main1d} with these material properties satisfies
$$ \|u^{PC}_{\epsilon} - u_{\epsilon}\|_{\mathcal{Z}} < \delta. $$
\end{restatable}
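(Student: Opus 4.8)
The plan is to reduce \Cref{thm:PC_approx} to a quantitative stability estimate for the solution map $(E,\nu)\mapsto u_\epsilon$, regarded as a map from $L^\infty\bigl(\mathcal{D};(0,\infty)\bigr)^2$ into $\mathcal{Z}$, and then to invoke the elementary fact that a piecewise continuous function with finitely many discontinuities is approximable to arbitrary accuracy in $L^\infty$ by a piecewise-constant function. Concretely, I would first establish that for material properties $(E,\nu)$ and $(E^{PC},\nu^{PC})$ both satisfying \Cref{ass:stab}, the associated weak solutions of \eqref{eqn:gen_q} obey
\begin{equation*}
\|u_\epsilon^{PC}-u_\epsilon\|_{\mathcal{Z}}^2 \le C\Bigl(\|\nu-\nu^{PC}\|_\infty^2\int_0^T\|\p_t u_\epsilon\|_{H_0^1}^2\,dt + \|E-E^{PC}\|_\infty^2\int_0^T\|u_\epsilon\|_{H_0^1}^2\,dt\Bigr),
\end{equation*}
where $C$ depends only on $\mathcal{D}$ and on $E^\pm,\nu^\pm$. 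Since rescaling by $\epsilon$ preserves sup-norm distances, $\|\nu_\epsilon-\nu_\epsilon^{PC}\|_\infty=\|\nu-\nu^{PC}\|_\infty$ and likewise for $E$, so this is exactly Lipschitz dependence on the unit-cell properties.

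To obtain the estimate I would set $w=u_\epsilon-u_\epsilon^{PC}$, subtract the two weak forms, and exploit that $q_\xi$ is linear in $\xi$ and bilinear in its function arguments to write, for all $\varphi\in V$,
\begin{equation*}
q_{\nu_\epsilon^{PC}}(\p_t w,\varphi)+q_{E_\epsilon^{PC}}(w,\varphi) = -q_{\nu_\epsilon-\nu_\epsilon^{PC}}(\p_t u_\epsilon,\varphi)-q_{E_\epsilon-E_\epsilon^{PC}}(u_\epsilon,\varphi).
\end{equation*}
Testing with $\varphi=w$ and using $q_{\nu_\epsilon^{PC}}(\p_t w,w)=\tfrac12\tfrac{d}{dt}\|w\|_{H_0^1,\nu_\epsilon^{PC}}^2$ (valid since the coefficients are time-independent), together with the bound $|q_\xi(a,b)|\le\|\xi\|_\infty\|a\|_{H_0^1}\|b\|_{H_0^1}$ and Young's inequality to absorb $\|w\|_{H_0^1}^2$ into the coercive term $\|w\|_{H_0^1,E_\epsilon^{PC}}^2\ge E^-\|w\|_{H_0^1}^2$, yields a differential inequality for $\|w\|_{H_0^1,\nu_\epsilon^{PC}}^2$. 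Integrating in time with $w(0)=0$ (the two problems share the datum $u^*$), then applying \Cref{lem:h01equi} and the Poincaré inequality to pass from $\|\cdot\|_{H_0^1}$ to $\|\cdot\|$, gives the displayed bound.

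The crux is verifying that the two integrals $\int_0^T\|u_\epsilon\|_{H_0^1}^2\,dt$ and $\int_0^T\|\p_t u_\epsilon\|_{H_0^1}^2\,dt$ are finite for the fixed solution. The first follows by testing \eqref{eqn:gen_q} with $\varphi=u_\epsilon$ and using $\la f,u_\epsilon\ra\le\|f\|\,\|u_\epsilon\|$ with Poincaré and Young, bounding it through $\|f\|_{\mathcal{Z}}$ and $\|u^*\|_{H_0^1}$. The second is the main obstacle and requires temporal regularity: testing with $\varphi=\p_t u_\epsilon$ gives
\begin{equation*}
\tfrac{\nu^-}{2}\int_0^T\|\p_t u_\epsilon\|_{H_0^1}^2\,dt + \tfrac12\|u_\epsilon(T)\|_{H_0^1,E_\epsilon}^2 \le \tfrac12\|u^*\|_{H_0^1,E_\epsilon}^2 + \tfrac{C_P^2}{2\nu^-}\int_0^T\|f\|^2\,dt,
\end{equation*}
so the viscous term—here $\nu^->0$ is essential—controls $\p_t u_\epsilon$ in $L^2(\TT;V)$. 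To justify testing with $\p_t u_\epsilon$ I would first record that \eqref{eqn:gen_q} is an affine ODE in $V$, namely $\p_t u_\epsilon=A_{\nu_\epsilon}^{-1}(f-A_{E_\epsilon}u_\epsilon)$ with $A_{\nu_\epsilon},A_{E_\epsilon}:V\to V^*$ the operators of $q_{\nu_\epsilon},q_{E_\epsilon}$ (the former invertible by Lax--Milgram since $\nu^->0$); this yields $u_\epsilon\in C(\TT;V)$ with $\p_t u_\epsilon\in L^2(\TT;V)$ whenever $u^*\in V$, so the energy identity is legitimate (rigorously, by performing it on a Galerkin approximation and passing to the limit).

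Finally, given $\delta>0$, I would construct the approximants. On each of the finitely many intervals of continuity, $E$ and $\nu$ are uniformly continuous, hence approximable in sup norm by step functions to any tolerance; taking the common refinement of the two partitions and of the discontinuity set produces $E^{PC},\nu^{PC}$ that are piecewise constant in the sense of \Cref{def:piecewise}, share their breakpoints, and satisfy $E^-\le E^{PC}\le E^+$, $\nu^-\le\nu^{PC}\le\nu^+$ (so \Cref{ass:stab} holds and $C$ is uniform). Choosing $\|E-E^{PC}\|_\infty$ and $\|\nu-\nu^{PC}\|_\infty$ small enough relative to $C$ and the now-finite integrals of $u_\epsilon$ makes the right-hand side of the stability bound less than $\delta^2$, giving $\|u_\epsilon^{PC}-u_\epsilon\|_{\mathcal{Z}}<\delta$ and completing the proof.
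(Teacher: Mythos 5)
Your proposal is correct and follows essentially the same route as the paper: subtract the two weak forms to get an error equation driven by $q_{\nu_\epsilon-\nu_\epsilon^{PC}}$ and $q_{E_\epsilon-E_\epsilon^{PC}}$, run an energy estimate (test with the difference, Young, absorb into coercivity, integrate from zero initial error) to obtain Lipschitz-type stability of the solution map with respect to $(E,\nu)$ in $L^\infty$, and then approximate the piecewise continuous coefficients in sup norm by step functions on a common refinement of their continuity intervals, exactly as in the paper's Lemma \ref{lem:u1u2} and its proof of Theorem \ref{thm:PC_approx}. The only differences are minor: you control the source terms via $L^2$-in-time integrals of $\|u_\epsilon\|_{H_0^1}$ and $\|\p_t u_\epsilon\|_{H_0^1}$ where the paper uses sup-in-time bounds (Propositions \ref{prop:bounds_u} and \ref{prop:bound_u1u2}), and you add a Lax--Milgram/Galerkin justification for testing with $\p_t u_\epsilon$ that the paper leaves implicit.
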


Note that Theorem \ref{thm:PC_approx} is stated in the setting of no inertia.
The proof depends on the following lemma; proof of both the theorem
and the lemma may be found in Appendix \ref{sapdx:PC_approx}.
We observe that, since the Lipschitz result is in the $L^{\infty}-$norm with respect to
the material properties, it holds with constant $C$ independent of $\epsilon$, in the
case of interest where the material properties vary rapidly on scale $\epsilon.$
\begin{restatable}[Lipschitz Solution]{lemma}{lemmau}\label{lem:u1u2}
Let $u_i$ be the solution to 
\begin{align}
    -&\p_x\left(E_{i}\p_x u_{i} + \nu_{i}\p^2_{xt} u_{i}\right) = f, \quad (x,t) \in \p\Omega \times \TT, \\
    &u_{i}(x,t)  = u^*, \quad (x,t) \in \mathcal{D} \times \{0\},\\
    &u_{i}(x,t)  = 0, \quad  (x,t)  \in \p\Omega \times \mathcal{T} ,
\end{align}
associated with material properties $E_i$, $\nu_i$, for $i \in \{1,2\}$,
and forcing $f$, all satisfying the Assumptions \ref{ass:stab}. Then 
$$ \|u_1 - u_2\|_{\mathcal{Z}} \leq C \left( \|\nu_1 - \nu_2 \|_{{\infty}} + \|E_1 - E_2 \|_{{\infty}}\right)$$
for some constant $C \in \bbR^+$ dependent on $f, E_i^+, E_i^-, \nu_i^+$, $\nu_i^-$ and $L$ and independent of $\epsilon.$
\end{restatable}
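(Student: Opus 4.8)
The plan is a direct energy-perturbation argument exploiting linearity: set $w = u_1 - u_2$, derive the weak equation $w$ satisfies, and bound $w$ in $\mathcal{Z}$ using a priori estimates on the reference solution $u_1$ together with the coefficient differences. No Gr\"onwall step will be needed, since the source driving $w$ depends on $u_1$ but not on $w$ itself.

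First I would subtract the two weak forms. Telescoping via $q_{\nu_1}(\p_t u_1,\varphi) - q_{\nu_2}(\p_t u_2,\varphi) = q_{\nu_1-\nu_2}(\p_t u_1,\varphi) + q_{\nu_2}(\p_t w,\varphi)$ and the analogous identity for the elastic terms, the common forcing $\langle f,\varphi\rangle$ cancels, so $w$ solves
\[ q_{\nu_2}(\p_t w, \varphi) + q_{E_2}(w, \varphi) = -q_{\nu_1-\nu_2}(\p_t u_1,\varphi) - q_{E_1-E_2}(u_1,\varphi) \]
for all $\varphi \in V$, with $w(0) = u^* - u^* = 0$. The left side has the structure of the original operator with coefficients $(\nu_2,E_2)$; the right side is a source controlled by the coefficient perturbations and by $u_1$.

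Next, I test with $\varphi = w$, using $q_{\nu_2}(\p_t w,w) = \tfrac12 \tfrac{d}{dt}\|w\|_{H_0^1,\nu_2}^2$ and $q_{E_2}(w,w) = \|w\|_{H_0^1,E_2}^2 \ge 0$. Bounding the right side by the Cauchy--Schwarz estimate $|q_\xi(u,v)| \le \|\xi\|_\infty \|\p_x u\|\,\|\p_x v\|$ followed by Young's inequality, and comparing $\|\p_x w\|$ with $\|w\|_{H_0^1,E_2}$ through Lemma \ref{lem:h01equi}, a small enough Young parameter absorbs the resulting $\|w\|_{H_0^1,E_2}^2$ into the left side, leaving
\[ \tfrac12 \tfrac{d}{dt}\|w\|_{H_0^1,\nu_2}^2 \le C\big(\|\nu_1-\nu_2\|_{\infty}^2\,\|\p_x \p_t u_1\|^2 + \|E_1-E_2\|_{\infty}^2\,\|\p_x u_1\|^2\big). \]
Integrating from $0$ to $t$ and using $w(0)=0$ bounds $\|w(t)\|_{H_0^1,\nu_2}^2$ uniformly in $t$ by the squared coefficient differences times $\int_0^T \|\p_x \p_t u_1\|^2\,dt + \int_0^T \|\p_x u_1\|^2\,dt$. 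The Poincar\'e inequality on $\mathcal{D}=[0,L]$ (whose constant supplies the advertised $L$-dependence) together with Lemma \ref{lem:h01equi} converts this $H_0^1$-control into the $L^2$-norm appearing in $\mathcal{Z}$; taking $\esssup_t$, then square roots, and using $\sqrt{a^2+b^2}\le a+b$, gives the stated Lipschitz bound.

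The remaining ingredient, and the \emph{main obstacle}, is the a priori bound $\int_0^T \|\p_x \p_t u_1\|^2\,dt + \sup_{t}\|\p_x u_1(t)\|^2 < \infty$ with constant depending only on $f, E_1^{\pm}, \nu_1^{\pm}, L$. I would obtain this from a separate energy estimate for $u_1$, testing its weak form with $\varphi = \p_t u_1$ to get $\|\p_t u_1\|_{H_0^1,\nu_1}^2 + \tfrac12 \tfrac{d}{dt}\|u_1\|_{H_0^1,E_1}^2 = \langle f, \p_t u_1\rangle$; bounding the right side by Poincar\'e and Young absorbs half the dissipation, after which integration in time controls both quantities using $\|f\|_{\mathcal{Z}}<\infty$ and the fixed initial data $u^*$. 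The delicate point is that $\p_t u_1$ must be an admissible test function; this is justified rigorously by carrying out the estimate at the level of a Galerkin (or time-discrete) approximation and passing to the limit, which is legitimate because $q_{\nu_1}(\cdot,\cdot)$ is coercive (as $\nu_1^->0$), rendering the evolution well posed. All generated constants are of the claimed form, and the absence of any $w$-dependent term on the right side is exactly what makes the perturbation estimate direct.
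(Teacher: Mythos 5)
Your proof is correct, and its architecture matches the paper's: subtract the two weak forms so that the difference solves a viscoelastic equation whose operator coefficients come from one material and whose source is the coefficient perturbation acting on the other solution, test with the difference itself, control the source through a priori estimates on the reference solution, and convert to $\mathcal{Z}$ via Poincar\'e. (The paper does the mirror-image split, keeping $(E_1,\nu_1)$ on $\gamma=u_1-u_2$ and loading $(\Delta E,\Delta\nu)$ onto $u_2$; that difference is purely cosmetic.) Where you genuinely diverge is in closing the estimate. The paper needs the source bounded in $\mathcal{Z}$, i.e.\ uniformly in time, so it proves the pointwise-in-time bound $\|\p_t u(t)\|_{H_0^1,\nu}\le \frac{C_1}{\nu^-}\|f\|_{\mathcal{Z}}+\frac{E^+}{\nu^-}\|u(t)\|_{H_0^1,E}$ by reading the weak form at each fixed $t$ as an elliptic equation for $\p_t u$, and then applies Gr\"onwall to $\frac{d}{dt}\|\gamma\|^2_{H_0^1,\nu}+c\|\gamma\|^2_{H_0^1,\nu}\le K$, exploiting the damping to obtain a constant uniform in the horizon. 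You instead absorb the Young remainder into the retained elastic term $q_{E_2}(w,w)$ and integrate the resulting inequality directly, so you only need $L^2$-in-time control of $\p_x\p_t u_1$, supplied by the standard parabolic dissipation estimate (test with $\p_t u_1$, absorb, integrate); you are also more careful than the paper about admissibility of $\p_t u_1$ as a test function, which the paper uses without comment. The trade-off is that your constant acquires a dependence on $T$ (through $\int_0^T\|\p_x u_1\|^2\,dt\le T\sup_{t}\|\p_x u_1\|^2$ and the integrated forcing), which the lemma's statement does not list among the dependencies; but since $T$ is fixed throughout the paper, and the paper's own constant depends on the equally unlisted $\|u^*\|_{H_0^1,\nu}$, this weakening is cosmetic rather than substantive.
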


\subsection{Homogenization for Piecewise Constant Material}\label{subsec:homog}
We show in Theorem \ref{thm:PC_exact} that for piecewise-constant material properties $E(\cdot)$ and $\nu(\cdot)$, the homogenized map $\Psi_0^{\dagger}$ given in \eqref{eqn:homogc_general}
can be written explicitly with a finite number of parameters, and in particular the memory is expressible in a Markovian form. This Markovian form  implicitly defines a finite number of internal variables.

\begin{restatable}[\bf{Existence of Exact Parametrization}]{theorem}{exactpieceparamthm}\label{thm:PC_exact}
Let $\Psi_0^{\dagger}$ be the map from strain history to stress in the homogenized model, as defined by equation \eqref{eqn:homogc_general}, in a piecewise constant material with $L'+1$ pieces. Define $\Psi_0^{PC}: \R^2 \times
C(\TC; \R) \times \TT \times \Theta \to \R$ by
\begin{subequations}\label{eqn:PC_exact}
\begin{align}
    &\Psi_0^{PC}(\p_x u_0(t), \p^2_{xt} u_0(t), \{\p_x u_0(\tau)\}_{\tau \in \overline{\mathcal{T}}}, t; \theta) = E_0 \p_x u_0(t) + \nu_0\p^2_{xt} u_0(t)  - \sum_{\ell=1}^{L_0} \xi_{\ell}(t),\\
    &\p_t \xi_{\ell}(t)  = \beta_{\ell}\p_x u_0(t) - \alpha_{\ell}\xi_{\ell}(t),\, \xi_{\ell}(0)=0, \quad \ell \in \{1, \dots, L_0\},
\end{align}
\end{subequations}
with parameter space 
\begin{equation}\label{eqn:Theta}
        \Theta = \left(E_0 \in \bbR_+, \; \nu_0 \in \bbR_+,\; L_0 \in \mathbb{Z}_+, \; \alpha_0 \in \bbR^{L_0}_+, \; \beta_0 \in \bbR^{L_0}\right).
\end{equation}
Then, under Assumptions \ref{ass:stab}, there exists $\theta^* \in \Theta$ with $(E_0,\nu_0,L_0,\alpha_0,\beta_0) = (E',\nu', L',\alpha,\beta)$ such that $$\Psi_0^{\dagger}(\p_x u_0(t),\p^2_{xt} u_0(t), \left\{\p_x u_{0}(\tau)\right\}_{\tau \in \overline{\mathcal{T}}},t)=\Psi^{PC}_0(\p_x u_0(t),\p^2_{xt}u_0(t),\{\p_x u_0(\tau)\}_{\tau \in \overline{\mathcal{T}}},t; \theta^*)$$ for all $u_0 \in \mathcal{C}^2( \overline{\mathcal{D}}\times \TC; \bbR)$
and $t \in \mathcal{T}$. 
\end{restatable}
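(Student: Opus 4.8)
The plan is to exploit the fact that, for piecewise-constant media, the homogenized symbol $\wh{a}_0(s)$ is a rational function of the Laplace variable $s$, and to read off the Markovian structure from its partial-fraction decomposition. Write the unit cell as $L'+1$ intervals of lengths $p_1,\dots,p_{L'+1}$ on which $E\equiv E_j$ and $\nu\equiv\nu_j$. Then \eqref{eqn:a_0} gives
\[
\wh{a}_0(s) = \Bigl(\sum_{j=1}^{L'+1}\frac{p_j}{s\nu_j+E_j}\Bigr)^{-1} = \frac{D(s)}{N(s)}, \qquad D(s)=\prod_{j}(s\nu_j+E_j),\quad N(s)=\sum_j p_j\prod_{k\ne j}(s\nu_k+E_k),
\]
so that $\wh{a}_0$ is a rational function with $\deg D = L'+1$ and $\deg N = L'$. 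Since it was already established in the excerpt that $\wh{a}_0(s)=\nu's+E'+\wh{\kappa}(s)$ with $\wh{\kappa}$ strictly proper and with $\nu',E'>0$ matching the parameter space $\Theta$, the remaining task is to show that $\wh{\kappa}$ inverts to a finite sum of decaying exponentials and to recast the associated convolution through internal variables.

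The main step is to analyze the poles of $\wh{a}_0$, i.e. the zeros of $g(s):=\sum_{j}\frac{p_j}{s\nu_j+E_j}=N(s)/D(s)$. I would first assume the ratios $E_j/\nu_j$ are distinct (the degenerate case merely merges terms and lowers the internal-variable count) and order the real poles $s_j^*=-E_j/\nu_j<0$ of $g$. On the real axis $g'(s)=-\sum_j p_j\nu_j/(s\nu_j+E_j)^2<0$, so $g$ is strictly decreasing on each interval between consecutive poles; as $s$ increases across such an interval $g$ runs from $+\infty$ to $-\infty$, yielding exactly one simple real zero per interval, while $g$ has no zeros outside $(s_1^*,s_{L'+1}^*)$ since it is sign-definite there. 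This accounts for all $L'$ zeros of $N$, so the poles of $\wh{a}_0$ are real, simple, and strictly negative. Writing each pole as $-\alpha_\ell$ with $\alpha_\ell>0$ and performing the partial-fraction expansion $\wh{\kappa}(s)=\sum_{\ell=1}^{L'}\frac{c_\ell}{s+\alpha_\ell}$ with real residues $c_\ell$, inversion of the Laplace transform gives $\kappa(t)=\sum_{\ell=1}^{L'}c_\ell e^{-\alpha_\ell t}$.

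Finally I would encode the convolution $\int_0^t\kappa(t-\tau)\p_xu_0(\tau)\,d\tau$ in Markovian form by defining $\xi_\ell(t):=-c_\ell\int_0^t e^{-\alpha_\ell(t-\tau)}\p_xu_0(\tau)\,d\tau$. Differentiating shows $\xi_\ell(0)=0$ and $\p_t\xi_\ell=-\alpha_\ell\xi_\ell-c_\ell\p_xu_0$, which matches the internal-variable ODE in \eqref{eqn:PC_exact} upon setting $\beta_\ell=-c_\ell$, while $-\sum_\ell\xi_\ell(t)$ reproduces the memory integral in \eqref{eqn:homogc_general} exactly. Taking $(E_0,\nu_0,L_0,\alpha_0,\beta_0)=(E',\nu',L',\alpha,\beta)$ then identifies $\Psi_0^{\dagger}$ with $\Psi_0^{PC}(\cdot\,;\theta^*)$ pointwise for all smooth $u_0$, as claimed. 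The chief obstacle is the pole analysis of the previous paragraph: establishing that every pole of $\wh{a}_0$ is real, simple, and negative is precisely what guarantees that $\kappa$ is a genuine (decaying, non-oscillatory) sum of exponentials, and hence that the finite internal-variable system \eqref{eqn:PC_exact} with real $\beta_\ell$ and $\alpha_\ell>0$ reproduces the memory exactly; the interlacing/monotonicity argument, together with correct bookkeeping in the coincident-ratio case, is the crux.
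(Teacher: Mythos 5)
Your proof is correct, and its overall skeleton matches the paper's: both write $\wh{a}_0$ from \eqref{eqn:a_0} as a rational function of degree $(L'+1)/L'$, peel off the linear part $\nu' s + E'$, expand the strictly proper remainder in partial fractions, invert the Laplace transform to obtain a finite sum of decaying exponentials, and encode the resulting convolution through the internal variables $\xi_\ell$ exactly as in \eqref{eqn:PC_exact}. Where you genuinely diverge is in the key lemma that the poles of $\wh{a}_0$ are real and negative (the paper's Lemma \ref{lem:exactpieceparam}). The paper argues algebraically: for a putative complex root $a+bi$ of $Q(s)=\sum_\ell d_\ell \prod_{j\neq\ell}(E_j+\nu_j s)$ it factors $Q(a+bi)=\bigl[\prod_j (E_j+\nu_j(a+bi))\bigr]\sum_\ell d_\ell/(E_\ell+\nu_\ell(a+bi))$ and observes that every term of the imaginary part of the sum carries the same sign factor $-\nu_\ell b$, so it cannot vanish unless $b=0$; negativity then follows from positivity of the coefficients of $Q$. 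You instead restrict $g=1/\wh{a}_0$ to the real axis and use strict monotonicity of $g$ between consecutive poles $-E_j/\nu_j$ to produce, via the intermediate value theorem, one zero in each of the $L'$ gaps, which exhausts the degree of $N$ and excludes complex zeros by counting. Your route buys something the paper's does not: the zeros are automatically \emph{simple} (and interlace the $-E_j/\nu_j$), and simplicity is precisely what licenses the partial-fraction form $\sum_\ell \beta_\ell/(s+\alpha_\ell)$ with no $(s+\alpha_\ell)^{-2}$ terms --- a point the paper's proof uses silently and never establishes. The cost is your standing assumption that the ratios $E_j/\nu_j$ are distinct, so you must separately handle the coincident-ratio case by merging terms and then padding with $\beta_\ell=0$ entries to report exactly $L'$ internal variables as the theorem statement demands; the paper's computation, by contrast, applies verbatim without any genericity assumption (though it, too, would need your merging observation to rule out the repeated-root pathology in its partial-fraction step). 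Both routes then finish identically, with $\beta_\ell$ equal to minus the residue of $\wh{\kappa}$ at $-\alpha_\ell$ and $\Psi_0^\dagger$ recovered from \eqref{eqn:homogc_general}.
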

The proof of the above theorem may be found in Appendix \ref{sapdx:PC_exact}. 
Note that the model in equations \eqref{eqn:PC_exact} is Markovian. Furthermore, although the model in \eqref{eqn:PC_exact} requires an input of $t$ for evaluation, the spatial variable $x$ only enters implicitly through the local values of $\p_x u_0$ and $\p^2_{xt}u_0$; the model acts pointwise in space. Thus we have not included $x$ explicitly in the theorem statement,
for economy of notation. In what follows it is useful to define
$u_0^{PC}$ to be the solution to the following system defined 
with constitutive model $\Psi_0^{PC}$: 
\begin{subequations}\label{eqns:PCC}\begin{align}
   &\rho\p_t^2u_{0}^{PC}-\p_x\sigma_{0}  = f, \quad  (x,t) \in\Omega  \times \mathcal{T}\label{eqn:PCC-force-balance}\\
    &\sigma_0(t) = \Psi_0^{PC}\left(\p_xu^{PC}_{0}(t), \p_{xt}^2  u^{PC}_{0}(t), \left\{\p_x u^{PC}_{0}(\tau)\right\}_{\tau \in \overline{\mathcal{T}}},t\right), \quad (x,t) \in \Omega \times \mathcal{T}\\
    &u^{PC}_{0}\rvert_{t=0}  = u^*, \quad \p_t u^{PC}_{0}\rvert_{t=0}  = v^*, \quad (x,t) \in \Omega \times \{0\} \\
    &u^{PC}_{0}  = 0, \quad  (x,t) \in \p\Omega \times \mathcal{T}.
\end{align}
\end{subequations}

Using a homogenization theorem, together with approximation by piecewise
constant material properties, we now show that $u_{\epsilon}$ can be approximated
by $u_0^{PC}$; this will follow from the inequality
$$\|u_{\epsilon} -  u_0^{PC}\|_{\mathcal{Z}_2} \le 
\|u_{\epsilon}-u_{\eps}^{PC}\|_{\mathcal{Z}_2}
+\|u_{\eps}^{PC}-u_{0}^{PC}\|_{\mathcal{Z}_2}.$$
The first term on the righthand-side may be controlled using Theorem \ref{thm:PC_approx}.
The fact that dynamics under constitutive law
$\Psi^\dagger_{\eps}$ converge to those under 
$\Psi^\dagger_{0}$ as $\eps \to 0$ may be used to control the second term; 
this fact is a consequence of the following theorem:
\begin{theorem}\label{thm:FS} Under Assumptions \ref{ass:stab}, the solution $u_{\epsilon}$ to equations \eqref{eqn:main1d} converges weakly to $u_0$, the solution to equations \eqref{eqns:1dVE_PDE} with $\rho=0$, in $W^{1,2}(\mathcal{T};V)$. Thus, for any $\eta >0$ there exists $\epsilon_{\text{crit}} >0 $ such that for all $\eps \in (0,\eps_{\text{crit}})$,
\begin{equation}\label{eqn:FS}
\|u_{\epsilon} - u_0\|_{\mathcal{Z}_2} < \eta .
\end{equation}
\end{theorem}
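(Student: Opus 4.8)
The plan is to prove the asserted weak convergence by the standard energy-plus-compactness scheme for linear homogenization, and then to upgrade it to the strong $\mathcal{Z}_2$ bound \eqref{eqn:FS} by a compactness lemma. First I would establish uniform a priori bounds. Testing the weak form \eqref{eqn:gen_q} with $\varphi = \p_t u_{\eps}$ and using $q_{E_{\eps}}(u_{\eps},\p_t u_{\eps}) = \tfrac12 \tfrac{d}{dt} q_{E_{\eps}}(u_{\eps},u_{\eps})$ gives
\[
\|\p_t u_{\eps}\|_{H_0^1,\nu_{\eps}}^2 + \tfrac12 \tfrac{d}{dt}\|u_{\eps}\|_{H_0^1,E_{\eps}}^2 = \la f, \p_t u_{\eps}\ra .
\]
Bounding the right-hand side by Cauchy--Schwarz, a Poincar\'e inequality, and Young's inequality (absorbing the gradient of $\p_t u_{\eps}$ into the first term using $\nu^->0$), then integrating in $t$ and invoking Assumptions \ref{ass:stab} to control $\|u^*\|_{H_0^1,E_{\eps}}$ and $\|f\|_{\mathcal{Z}}$, yields a bound on $\|u_{\eps}\|_{L^{\infty}(\TT;V)} + \|\p_t u_{\eps}\|_{L^2(\TT;V)}$ uniform in $\eps$; the constants depend only on $E^{\pm},\nu^{\pm}$ through Lemma \ref{lem:h01equi} and hence survive the rapid oscillation on scale $\eps$. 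Thus $\{u_{\eps}\}$ is bounded in $W^{1,2}(\TT;V)$ and, by reflexivity, a subsequence converges weakly to some $u_* \in W^{1,2}(\TT;V)$.

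The crux is to identify $u_* = u_0$, i.e.\ to show the weak limit obeys the memory-dependent homogenized law \eqref{eqn:homogc_general}. Here I would exploit the one-dimensional structure. The flux $\sigma_{\eps} = E_{\eps}\p_x u_{\eps} + \nu_{\eps}\p_x \p_t u_{\eps}$ satisfies $\p_x \sigma_{\eps} = -f$, so $\sigma_{\eps}(x,t) = \sigma_{\eps}(0,t) - \int_0^x f(\cdot,t)\,dx'$ is affine in $x$ up to the single unknown $\sigma_{\eps}(0,t)$. Taking the Laplace transform in $t$, as in the derivation of \eqref{eqn:a_0}, the constitutive law becomes $\p_x \wh{u}_{\eps} = \wh{a}_{\eps}(s)^{-1}\wh{\sigma}_{\eps}$, and the Dirichlet condition forces $\int_0^L \wh{a}(\cdot/\eps,s)^{-1}\wh{\sigma}_{\eps}\,dx = 0$, which pins down $\wh{\sigma}_{\eps}(0,s)$. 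Since $\wh{a}(\cdot/\eps,s)^{-1}$ is rapidly oscillating and $1$-periodic it converges weak-$*$ in $L^{\infty}(\mathcal{D})$ to its mean $\la \wh{a}(s)^{-1}\ra = \wh{a}_0(s)^{-1}$, while $\wh{\sigma}_{\eps}$ converges strongly (being affine in $x$ with a convergent constant); passing to the limit in this product of weak-$*$ and strongly convergent factors identifies $\wh{\sigma}_0 = \wh{a}_0(s)\p_x \wh{u}_0$, which on Laplace inversion is exactly \eqref{eqn:homogc_general}. The delicate part --- and the main obstacle --- is to make this uniform enough in $s$ to justify inversion back to $W^{1,2}(\TT;V)$ and to recover the decaying memory kernel $\kappa$ from the residual $\wh{\kappa}(s)$; this is precisely the content of the Francfort--Suquet / Sanchez--Palencia analysis \cite{francfort1986homogenization,sanchez1980non}, which I would invoke directly, or alternatively reproduce by Tartar's oscillating-test-function method in the time domain, where the convolution kernel emerges from the time history carried by the corrector.

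Finally, to obtain \eqref{eqn:FS} I would upgrade the weak convergence to strong convergence in $\mathcal{Z}_2 = L^2(\TT;L^2(\mathcal{D}))$. Since $\{u_{\eps}\}$ is bounded in $L^2(\TT;V)$ and $\{\p_t u_{\eps}\}$ is bounded in $L^2(\TT;V) \subset L^2(\TT;L^2(\mathcal{D}))$, and the embedding $V = H_0^1(\mathcal{D}) \hookrightarrow L^2(\mathcal{D})$ is compact by Rellich's theorem, the Aubin--Lions--Simon lemma shows that $\{u_{\eps}\}$ is precompact in $\mathcal{Z}_2$. Combined with the identification $u_* = u_0$ from the previous step, this gives $u_{\eps} \to u_0$ strongly in $\mathcal{Z}_2$. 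Uniqueness of the solution to \eqref{eqns:1dVE_PDE} --- obtained by applying the same energy estimate to the difference of two solutions --- ensures the limit does not depend on the extracted subsequence, so the whole family converges. Hence for any $\eta>0$ there exists $\eps_{\mathrm{crit}}>0$ with $\|u_{\eps}-u_0\|_{\mathcal{Z}_2}<\eta$ whenever $\eps \in (0,\eps_{\mathrm{crit}})$, which is \eqref{eqn:FS}.
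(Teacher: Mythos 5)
Your proof is correct and follows essentially the same route as the paper: the crucial identification of the homogenized limit (the memory-kernel constitutive law) is delegated to the Francfort--Suquet theorem \cite{francfort1986homogenization}, exactly as the paper does, and the upgrade from weak convergence in $W^{1,2}(\mathcal{T};V)$ to strong convergence in $\mathcal{Z}_2$ is the same compactness argument (your explicit Aubin--Lions--Simon step is precisely the compact embedding of $W^{1,2}(\mathcal{T};V)$ into $\mathcal{Z}_2$ that the paper invokes). Your additional scaffolding --- the uniform energy estimates, the Laplace-transform sketch of the limit identification, and the uniqueness argument for full-sequence convergence --- is sound but supplementary to the same underlying proof.
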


\begin{proof}
Since $f \in \mathcal{Z}$,  continuous embedding gives
$f \in \mathcal{Z}_2.$ Applying Theorem 3.1\cite{francfort1986homogenization} 
(noting that the work in that paper is set in dimension $d=3$,
but is readily extended to dimension $d=1$)
establishes weak convergence of $u_{\epsilon}$ to $u_0$ in
$W^{1,2}(\TT,V)$. Hence strong convergence in $\mathcal{Z}_2$ follows,
by compact embedding of $W^{1,2}(\TT;V)$ into $\mathcal{Z}_2$.
\end{proof}

The following corollary is a consequence of Theorem \ref{thm:FS}.

\begin{corollary}
Under Assumptions \ref{ass:stab} and assuming $E,\nu$ are piecewise
constant, the solution $u_{\epsilon}^{PC}$ to equations \eqref{eqn:main1d} converges weakly to $u_0^{PC}$, the solution to equations \eqref{eqns:PCC} with $\rho=0$, in $W^{1,2}(\mathcal{T};V)$. Thus, for any $\eta >0$ there exists $\epsilon_{\text{crit}} >0 $ such that for all $\eps \in (0,\eps_{\text{crit}})$,
\begin{equation}\label{eqn:FS_PC}
    \|u_{\eps}^{PC} - u_0^{PC} \|_{\mathcal{Z}_2} < \eta.
\end{equation}
\end{corollary}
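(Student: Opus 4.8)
The plan is to obtain this corollary as a direct specialization of Theorem \ref{thm:FS}, once the homogenized limit is identified with the Markovian solution $u_0^{PC}$. First I would observe that a piecewise-constant material with finitely many pieces, each taking values bounded above by $E^+,\nu^+$ and below by $E^-,\nu^->0$, satisfies Assumptions \ref{ass:stab} (finitely many discontinuities, uniform ellipticity). Consequently Theorem \ref{thm:FS} applies verbatim to the multiscale problem \eqref{eqn:main1d} with these coefficients: its solution $u_\epsilon^{PC}$ converges weakly in $W^{1,2}(\mathcal{T};V)$ to $u_0$, the solution of the homogenized system \eqref{eqns:1dVE_PDE} whose constitutive law is the general memory map $\Psi_0^{\dagger}$ of \eqref{eqn:homogc_general}. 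The compact embedding of $W^{1,2}(\mathcal{T};V)$ into $\mathcal{Z}_2$ then upgrades this to strong convergence, yielding $\|u_\epsilon^{PC}-u_0\|_{\mathcal{Z}_2}<\eta$ for all sufficiently small $\epsilon$.

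The remaining step is to show $u_0 = u_0^{PC}$. For piecewise-constant material, Theorem \ref{thm:PC_exact} furnishes $\theta^*\in\Theta$ for which $\Psi_0^{\dagger}$ and $\Psi_0^{PC}(\cdot\,;\theta^*)$ coincide as maps. Hence the constitutive relation in the homogenized system \eqref{eqns:1dVE_PDE} is, pointwise in $(x,t)$ and on each admissible strain history, identical to the constitutive relation in \eqref{eqns:PCC}. Since the two systems \eqref{eqns:1dVE_PDE} and \eqref{eqns:PCC} additionally share the same forcing $f$, the same homogeneous Dirichlet boundary condition, and the same initial data $(u^*,v^*)$, they constitute the same initial-boundary-value problem. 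By uniqueness of solutions for this linear problem (in the relevant weak class), we conclude $u_0=u_0^{PC}$. Substituting into the bound of the previous paragraph gives both the asserted weak convergence $u_\epsilon^{PC}\rightharpoonup u_0^{PC}$ and the estimate \eqref{eqn:FS_PC}.

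The main obstacle I anticipate is the regularity mismatch in the identification step: Theorem \ref{thm:PC_exact} asserts equality of the two constitutive maps only for smooth inputs $u_0\in\mathcal{C}^2(\overline{\mathcal{D}}\times\TC;\bbR)$, whereas the PDE solutions live a priori in the weaker space $W^{1,2}(\mathcal{T};V)$. I would address this either by noting that the memory-kernel representation \eqref{eqn:homogc_general} and its Markovian counterpart in \eqref{eqn:PC_exact} agree as bounded operators on the full solution space, the equality extending by density of smooth strain histories; or, more directly, by writing both \eqref{eqns:1dVE_PDE} and \eqref{eqns:PCC} in weak form and checking that the elastic, viscous, and convolution/internal-variable terms match term-by-term, so that any weak solution of one is a weak solution of the other. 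Given this identification, uniqueness for the linear homogenized problem is standard and the corollary follows immediately.
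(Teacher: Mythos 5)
Your proposal is correct and follows essentially the same route as the paper, which states this corollary without a separate proof precisely because it is Theorem \ref{thm:FS} applied to piecewise-constant coefficients, with the limit identified as $u_0^{PC}$ via Theorem \ref{thm:PC_exact} and uniqueness for the linear homogenized problem. Your explicit treatment of the identification step (including the regularity caveat and its resolution by matching the weak forms term-by-term) simply fills in what the paper leaves implicit.
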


Combining this result with that of Theorem \ref{thm:PC_approx}, noting continuous
embedding of $\mathcal{Z}$ into $\mathcal{Z}_2$, allows us to approximate $u_{\eps}$ by $u_0^{PC}$:

\begin{corollary}\label{cor:ue_uPC}
Let $E$ and $\nu$ be piecewise-continuous functions, with a finite number of discontinuities satisfying, along with $f$, Assumptions \ref{ass:stab}; let $u_{\epsilon}$ be the corresponding solution to \eqref{eqn:main1d}. 
Then for any $\eta > 0$, there exists $L_{crit}$ and $\eps_{crit}$ with the property that for all $L\geq L_{crit}$ there are $L-$piecewise-constant $E^{PC}$ and $\nu^{PC}$  such that for all $\eps \in (0,\eps_{crit})$,   
the solution to $u_0^{PC}$ to \eqref{eqns:PCC} with $\rho=0$ satisfies
\begin{equation}\label{eqn:u_eps_u0_PC}
\|u_{\epsilon} -  u_0^{PC}\|_{\mathcal{Z}_2} < \eta.
\end{equation}
\end{corollary}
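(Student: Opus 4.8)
The plan is to establish the bound by the triangle inequality already displayed in the excerpt,
\[
\|u_{\epsilon} - u_0^{PC}\|_{\mathcal{Z}_2} \le \|u_{\epsilon}-u_{\eps}^{PC}\|_{\mathcal{Z}_2} + \|u_{\eps}^{PC}-u_{0}^{PC}\|_{\mathcal{Z}_2},
\]
controlling the first term with the piecewise-constant approximation result (Theorem \ref{thm:PC_approx}) and the second with the preceding homogenization corollary. Throughout I work in the inertia-free setting $\rho=0$, consistent with all three ingredients. The strategy is simply to fix the target tolerance $\eta$, allocate $\eta/2$ to each summand, and verify that the quantifiers line up; the only genuine content beyond bookkeeping is checking this alignment, which I address below.

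For the first term I would use the continuous embedding $\mathcal{Z}\hookrightarrow\mathcal{Z}_2$, which gives $\|r\|_{\mathcal{Z}_2}\le \sqrt{T}\,\|r\|_{\mathcal{Z}}$. Applying Theorem \ref{thm:PC_approx} with $\delta = \eta/(2\sqrt{T})$ produces piecewise-constant $E^{PC},\nu^{PC}$, with some finite number of pieces that I designate $L_{crit}$, satisfying $\|u_{\epsilon}-u_{\eps}^{PC}\|_{\mathcal{Z}}<\delta$ and hence $\|u_{\epsilon}-u_{\eps}^{PC}\|_{\mathcal{Z}_2}<\eta/2$. The key feature, flagged after Lemma \ref{lem:u1u2}, is that this estimate is \emph{uniform in} $\epsilon$, since the underlying Lipschitz constant is measured in the $L^\infty$ material norm and is independent of $\epsilon$; thus the first summand is controlled for all $\epsilon$ at once, for this single fixed choice of material.

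The main (if mild) obstacle is the quantifier order in the statement: $\eps_{crit}$ is declared before $L$, yet must serve every $L\ge L_{crit}$, while a priori a different $L$ could call for a different piecewise-constant approximation and hence a different critical scale. I would resolve this by keeping the material properties fixed across all $L\ge L_{crit}$: the $L_{crit}$-piece functions $E^{PC},\nu^{PC}$ may be viewed as $L$-piecewise-constant for any $L\ge L_{crit}$ by inserting redundant breakpoints, as in the remark following Definition \ref{def:piecewise}. This changes neither the harmonic-average coefficient $\wh{a}_0$ nor the solutions $u_{\eps}^{PC},u_0^{PC}$, so a single $\eps_{crit}$ suffices for every $L$. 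With the material fixed, the homogenization corollary applied with tolerance $\eta/2$ yields $\eps_{crit}>0$ such that $\|u_{\eps}^{PC}-u_0^{PC}\|_{\mathcal{Z}_2}<\eta/2$ for all $\eps\in(0,\eps_{crit})$. Combining the two estimates through the triangle inequality gives $\|u_{\epsilon}-u_0^{PC}\|_{\mathcal{Z}_2}<\eta/2+\eta/2=\eta$ for all $L\ge L_{crit}$ and all $\eps\in(0,\eps_{crit})$, which is the claim.
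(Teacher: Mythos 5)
Your proposal is correct and takes essentially the same route as the paper: the paper's own justification is exactly the displayed triangle inequality, with the first term controlled by Theorem \ref{thm:PC_approx} together with the continuous embedding of $\mathcal{Z}$ into $\mathcal{Z}_2$, and the second term controlled by the corollary of Theorem \ref{thm:FS}, i.e.\ the bound \eqref{eqn:FS_PC}. The paper leaves the quantifier alignment implicit; your observation that the fixed $L_{crit}$-piece material may be regarded as $L$-piecewise-constant for every $L \ge L_{crit}$ by inserting redundant breakpoints (so that a single $\eps_{crit}$ serves all $L$) is a sound and worthwhile way of making that step rigorous.
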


\subsection{Neural Network Approximation of Constitutive Model}\label{subsec-RNN}

For the specific KV model in dimension $d=1$
we know the postulated form of $\Psi_0^{PC}$
and can in principle use this directly as a constitutive model.
However, in more complex problems we do not know the constitutive model
analytically, and it is then desirable to learn it from data from within an expressive model class. To this end we demonstrate
that $\Psi_0^{PC}$ can be approximated by an operator $\Psi_0^{RNN}$ which
has a similar form to that defined by equations
\eqref{eqn:PC_exact} but in which the right hand sides
of those equations are represented by neural networks, leading
to a recurrent neural network structure.

Recall the definitions of $(E',\nu',L')$ and $\theta^*$ in Theorem \ref{thm:PC_exact}.
We first define the linear functions $\mathcal{F}^{PC}: \bbR \times \bbR \times \bbR^{L'} \to  \bbR$
and $\mathcal{G}^{PC}: \bbR^{L'} \times \bbR \to \bbR$ by
\begin{subequations}\label{eqns:alphaellbetaell}
\begin{align}
    \mathcal{F}^{PC}\left(b,c,r\right) &= E'b + \nu' c -  \la \mathds{1},r\ra\\
 \mathcal{G}^{PC}(r,b) &= -Ar + \beta b
\end{align}
\end{subequations}
where $A = \text{diag}(\alpha_{\ell}) \in \bbR^{L' \times L'}$ and $\beta = \{\beta_1,\dots,\beta_{\ell}\} \in \bbR^{L'}$. We then have
\begin{subequations}
 \label{eqn:psiPC}   
\begin{align}
    \Psi_0^{PC}(\p_x u_0(t),\p_{xt}^2 u_0(t),\{\p_x u_0(\tau)\}_{\tau \in \overline{\mathcal{T}}},t;\theta^*)  &= \mathcal{F}^{PC}\left(\p_x u_0(t),\p^2_{xt} u_0(t),\xi(t)\right),\\
    \dot{\xi}(t)  &= \mathcal{G}^{PC}(\xi(t),\p_x u_0(t)), \quad \xi(0)=0
\end{align}
\end{subequations}
as in Theorem \ref{thm:PC_exact}.

We seek to approximate this map by $\Psi_0^{RNN}$ defined
by replacing the linear functions $\mathcal{F}^{PC}$ and $\mathcal{G}^{PC}$ by neural
networks $\mathcal{F}^{RNN}: \bbR \times \bbR \times \bbR^{L'} \to  \bbR$
and $\mathcal{G}^{RNN}: \bbR^{L'} \times \bbR \to \bbR$ to obtain
\begin{subequations}
 \label{eqn:psiRNN}   
\begin{align}
    \Psi_0^{RNN}(\p_x u_0(t),\p_{xt}^2 u_0(t),\{\p_x u_0(\tau)\}_{\tau \in \overline{\mathcal{T}}},t) & = \mathcal{F}^{RNN}\left(\p_x u_0(t),\p^2_{xt} u_0(t),\xi(t)\right)\\
    \dot{\xi}(t) & = \mathcal{G}^{RNN}(\xi(t),\p_x u_0(t)), \quad \xi(0)=0.
\end{align}
\end{subequations}

Let $R>0$ and define the bounded set $\mathsf{Z}_R=\{w: \bbR^+ \to \bbR\; \rvert\; \sup_{t \in \mcT} |w(t)| \le R\}.$

\begin{restatable}[RNN Approximation]{theorem}{RNNapprox}\label{thm:RNN_approx}Consider $\Psi_0^{PC}$ defined as by equations \eqref{eqns:alphaellbetaell}, \eqref{eqn:psiPC}. Assume that
there exist $\rho>0$ and $0 \le B <\infty$ such that $\rho < \min_{\ell}|\alpha_{\ell}|$ and $\max_{\ell}|\beta_{\ell}| \le B$. 
Then, under Assumptions \ref{ass:stab}, for every $\eta > 0$ there exists 
$\Psi_0^{RNN}$ of the form (\ref{eqn:psiRNN}) such that 
$$ \sup_{t \in \mathcal{T}, b,c \in \mathsf{Z}_R}\;\left|\Psi^{PC}_0\bigl(b(t),c(t), \left\{b(\tau)\right\}_{\tau \in \overline{\mathcal{T}}},t; \theta^*\bigr) - \Psi_0^{RNN}\bigl(b(t), c(t), \left\{b(\tau)\right\}_{\tau \in \overline{\mathcal{T}}},t\bigr)\right| < \eta. $$
\end{restatable}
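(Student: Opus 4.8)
The plan is to invoke a universal approximation theorem for the two finite-dimensional maps $\mathcal{F}^{PC}$ and $\mathcal{G}^{PC}$ and then to control the propagation of the resulting errors through the internal-variable ODE using the dissipativity supplied by the stability hypothesis. First I would establish an a priori bound on the true internal variable. Since the evolution in \eqref{eqn:PC_exact} decouples componentwise with $\xi_\ell(0)=0$, one has the explicit representation $\xi_\ell(t) = \int_0^t e^{-\alpha_\ell(t-s)}\beta_\ell\, b(s)\,ds$, so the hypotheses $\alpha_\ell > \rho$ and $|\beta_\ell| \le B$ together with $b \in \mathsf{Z}_R$ give the uniform-in-time bound $|\xi_\ell(t)| \le BR/\rho$, hence $|\xi(t)| \le M := \sqrt{L'}\,BR/\rho$ in Euclidean norm. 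Fixing $M' := M+1$, all states $(b,c,\xi)$ that can arise lie in the fixed compact set $K := [-R,R]^2 \times \{r \in \bbR^{L'} : |r| \le M'\}$.

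Next I would apply universal approximation (as in \cite{chen1995universal} and the classical finite-dimensional theory) to produce, for parameters $\eps_F, \eps_G > 0$ to be chosen at the end, neural networks $\mathcal{F}^{RNN}$ and $\mathcal{G}^{RNN}$ satisfying $\sup_{K}\,|\mathcal{F}^{RNN} - \mathcal{F}^{PC}| \le \eps_F$ and $\sup_{|r| \le M',\, |b| \le R}\,|\mathcal{G}^{RNN}(r,b) - \mathcal{G}^{PC}(r,b)| \le \eps_G$. Let $\tilde\xi$ denote the internal variable generated by $\mathcal{G}^{RNN}$ through the evolution equation in \eqref{eqn:psiRNN}. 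To bound $e := \tilde\xi - \xi$ I would use the decomposition $\dot e = \bigl[\mathcal{G}^{RNN}(\tilde\xi,b) - \mathcal{G}^{PC}(\tilde\xi,b)\bigr] - A e$, since $\mathcal{G}^{PC}(\tilde\xi,b) - \mathcal{G}^{PC}(\xi,b) = -Ae$. Differentiating $\tfrac12 |e|^2$ and using that $A$ is diagonal with entries exceeding $\rho$ yields $\tfrac{d}{dt}\tfrac12|e|^2 \le \eps_G|e| - \rho|e|^2$, whence $|e(t)| \le \eps_G/\rho$ uniformly in $t$.

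The main obstacle is that this estimate presupposes $\tilde\xi(t)$ remains inside the ball of radius $M'$ on which the approximation bound for $\mathcal{G}^{RNN}$ is valid; justifying this needs a continuation (bootstrapping) argument. I would set $T^* := \sup\{t \in \mcT : |\tilde\xi(s)| \le M'\ \text{for all } s \le t\}$; on $[0,T^*)$ the estimate applies, so $|\tilde\xi(t)| \le |\xi(t)| + |e(t)| \le M + \eps_G/\rho$, which is strictly below $M'=M+1$ as soon as $\eps_G < \rho$. Continuity then forces $T^* = T$, so the bound $\sup_{t \in \mcT}|e(t)| \le \eps_G/\rho$ holds on all of $\mcT$. (Local existence and uniqueness of $\tilde\xi$ follow from the Lipschitz continuity of the network $\mathcal{G}^{RNN}$, and the a priori bound extends the solution to the whole interval.) It is exactly the stability hypothesis $\rho < \min_\ell|\alpha_\ell|$ that supplies the dissipative term $-\rho|e|^2$, preventing the ODE-approximation error from growing; without it a Gronwall bound would degrade like $e^{LT}$ and close only on a bounded interval with a worse constant.

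Finally I would assemble the output error through the triangle inequality. Writing $\Psi_0^{PC} - \Psi_0^{RNN} = \bigl[\mathcal{F}^{PC}(b,c,\xi) - \mathcal{F}^{PC}(b,c,\tilde\xi)\bigr] + \bigl[\mathcal{F}^{PC}(b,c,\tilde\xi) - \mathcal{F}^{RNN}(b,c,\tilde\xi)\bigr]$, the first bracket equals $\la \mathds{1}, e\ra$ and is bounded by $\sqrt{L'}\,\eps_G/\rho$, while the second is bounded by $\eps_F$ because $(b,c,\tilde\xi) \in K$. Choosing $\eps_F = \eta/4$ and $\eps_G = \min\{\rho/2,\ \rho\eta/(4\sqrt{L'})\}$ makes $\sqrt{L'}\,\eps_G/\rho \le \eta/4$ and keeps $\eps_G < \rho$, giving the claimed uniform bound strictly below $\eta$ and completing the argument.
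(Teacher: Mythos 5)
Your proposal is correct and takes essentially the same approach as the paper's proof: an a priori bound $\sqrt{L'}BR/\rho$ on the true internal variable, universal approximation of $\mathcal{F}^{PC},\mathcal{G}^{PC}$ on a fixed compact set, a dissipative error estimate on the internal-variable mismatch giving the uniform bound $\epsilon_G/\rho$, a continuation argument to keep the learned internal variable inside the compact set, and the identical triangle-inequality decomposition of the output error. The only differences are cosmetic: the paper splits the argument into a proposition under global approximation assumptions followed by a compactness bootstrap phrased as a contradiction at a first crossing time, and it uses Young's inequality plus Gronwall where you use an invariant-region comparison.
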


The proof of Theorem \ref{thm:RNN_approx} can be found in Appendix \ref{sapdx:RNN}.


Note that $\Psi_0^{RNN}$ both avoids dependence on the fine-scale $\epsilon$ and is Markovian. The non-homogenized map $\Psi_{\epsilon}^\dagger$ is local in time while the homogenized map $\Psi_0^{RNN}$ is non-local in time and depends on the strain history. Let $u_0^{RNN}$ be the solution to the following system with
constitutive model $\Psi_0^{RNN}$: 
\begin{subequations}\label{eqns:RNN}\begin{align}
   &\rho\p_t^2u_{0}^{RNN}-\p_x\sigma_{0}  = f, \quad  (x,t) \in\Omega  \times \mathcal{T}\label{eqn:RNN-force-balance}\\
    &\sigma_0(t) = \Psi_0^{RNN}\left(\p_xu^{RNN}_{0}(t), \p_{xt}^2  u^{RNN}_{0}(t), \left\{\p_x u^{RNN}_{0}(\tau)\right\}_{\tau \in \overline{\mathcal{T}}},t\right), \quad (x,t) \in \Omega \times \mathcal{T}\\
    &u^{RNN}_{0}\rvert_{t=0}  = u^*, \quad \p_t u^{RNN}_{0}\rvert_{t=0}  = v^*, \quad (x,t) \in \Omega \times \{0\} \\
    &u^{RNN}_{0}  = 0, \quad  (x,t) \in \p\Omega \times \mathcal{T}.
\end{align}
\end{subequations}

Ideally we would like an approximation result bounding $\|u_{\epsilon} - u_0^{RNN}\|_{\mathcal{Z}_2}$, the
difference between solution of the multiscale problem \eqref{eqn:main1d} and the Markovian RNN model
\eqref{eqns:RNN}, in the case $\rho=0.$ Using Corollary \ref{cor:ue_uPC} shows that this would follow from
a bound on $\|u_{0}^{PC} - u_0^{RNN}\|_{\mathcal{Z}}$, where $u_0^{PC}$ solves \eqref{eqns:PCC}, in the
case $\rho=0.$ We note, however, that although
Theorem \ref{thm:RNN_approx} gives us an approximation result between $\Psi_0^{PC}$ and $\Psi_0^{RNN}$, proving that $u_{0}^{PC}$ and $u_{0}^{RNN}$ are close requires developing new theory for the fully nonlinear PDE for $u_0^{RNN}$; developing 
such a theory is beyond the scope of this work. Developing such a theory
is difficult for two primary reasons: (i) the monotonicity property of $\Psi_0^{RNN}$
with respect to strain rate is hard to establish globally, for a trained model;
(ii) the functions $\mathcal{F}^{RNN}, \mathcal{G}^{RNN}$ may not be differentiable. As a result, existence and uniqueness of $u_0^{RNN}$ remains unproven; however, numerical experiments in Section \ref{sec:Numerics} indicate that in practice, $u_0^{RNN}$ does approximate $u_{\epsilon}$ well.

\begin{remark}\hfill
\begin{itemize}
\item Monotonicity of $\Psi_0^{RNN}$ with respect to strain rate is a particular
issue when $\rho=0$ (no inertia) as in this case it is needed to define an
(implicit) equation for $\p_{t} u_0$ to determine the dynamics.
It is for this reason that our experiments
will all be conducted with $\rho>0$, obviating the need for the determination of
an (implicit) equation for $\p_{t} u_0$. However this leads to the issue that
the homogenized equation is only valid for a
subset of initial conditions, in the inertial setting $\rho>0$; see Remark \ref{rem:inertia}.

\item In practice we will train $\Psi_0^{RNN}$ 
using data for the inputs $b,c$ which are
obtained from a random function, or set of realizations of random functions.
The choice of the probability measure from which this training data is drawn
will affect the performance of the learned model when $\Psi_0^{RNN}$ is
evaluated at $b=\p_x u_0^{RNN}$ and $c=\p^2_{xt} u_0^{RNN}$, for displacement $u$ generated by the model given by \eqref{eqns:RNN}.
\end{itemize}
\end{remark}

\subsection{RNN Optimization}\label{ssec:RNN_Optimization}

In the following section, we present numerical results using a trained RNN as a \textit{surrogate model}: an efficient approximation of the complex microscale dynamics; in this section we discuss the problem of finding such an RNN. 
To learn the RNN operator approximation, we are given data 
\begin{equation}
\label{eq:datatt}
\bigl\{\bigl(\p_x u_0\bigr)_n, \bigl(\p^2_{xt} u_0\bigr)_n, \bigl(\sigma_0\bigr)_n\}_{n=1}^N
\end{equation}
where the suffix $n$ denotes the $n^{\text{th}}$ strain, strain rate, and stress trajectories over the entire time interval $\mcT$. Each strain trajectory $(\p_x u_0)_n$ is drawn i.i.d. from a measure $ \mu$ on $\mathcal{C}(\mathcal{T};\bbR)$.
There is no need to generate training data on the same time interval $\TT$ as the macroscale model;
we do so for simplicity.

The data for the homogenized constitutive model is given by
$$\sigma_0(t) = \Psi_0^{\dagger}(\p_x u_0(t),\p_{xt}^2 u_0(t), \left\{\p_x u_{0}(\tau)\right\}_{\tau \in \overline{\mathcal{T}}},t),$$
defined via solution of the cell-problem \eqref{eq:cell}; but it may also 
be obtained as the solution to a forced boundary problem on the microscale, 
as stated in the following lemma.

\begin{restatable}{lemma}{lemmaFBP}\label{lem:data_jus}
   Let $u$ solve the equations 
    \begin{subequations}\label{eqns:OG}
    \begin{align}
        \p_y \sigma (y,t) &= 0, \quad (y,t) \in \Omega \times \TT \\
        \sigma(y,t) &= E(y)\p_y u(y,t) + \nu(y) \p_{ty} u(y,t), \quad (y,t) \in \Omega \times \TT \\
        u(0,t) &= 0 , \quad u(1,t) = b(t), \quad (y,t) \in \p\Omega \times \TT \\
        u(y,0) &= 0, \quad y \in \Omega.
    \end{align}
\end{subequations} Then 
    $$ \{\sigma(t)\}_{t=0}^T = \Psi_0^{\dagger}\left(b(t),\p_tb(t), \{b(t)\}_{t=0}^T,t\right)$$
    where $\Psi_0^{\dagger}$ is the map defined in \eqref{eqn:Lap_inv}. 
\end{restatable}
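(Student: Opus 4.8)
The plan is to pass to the Laplace transform in time and exploit the fact that the spatial balance law $\p_y\sigma=0$ forces the stress to be constant across the unit cell; the homogenized coefficient $\wh a_0(s)$, which is exactly the harmonic average of $\wh a(s,y)=E(y)+\nu(y)s$, then emerges algebraically upon integrating the constitutive relation across the cell.

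First I would Laplace-transform the system \eqref{eqns:OG} in $t$, writing $\wh u(y,s)$ and $\wh\sigma(y,s)$ for the transforms. The zero initial condition $u(y,0)=0$ gives $\p_y u(y,0)=0$, so $\mathcal{L}(\p_{ty}u)=s\,\p_y\wh u$, and the constitutive law becomes $\wh\sigma(y,s)=\bigl(E(y)+\nu(y)s\bigr)\p_y\wh u(y,s)=\wh a(s,y)\,\p_y\wh u(y,s)$; the balance law gives $\p_y\wh\sigma=0$, so $\wh\sigma$ is independent of $y$ and I write it as $\wh\sigma(s)$. Solving for the strain, $\p_y\wh u(y,s)=\wh\sigma(s)/\wh a(s,y)$, and integrating over the cell using $\wh u(0,s)=0$, $\wh u(1,s)=\wh b(s)$ yields
\[
\wh b(s)=\wh\sigma(s)\int_0^1\frac{dy}{\wh a(s,y)}=\wh\sigma(s)\,\wh a_0(s)^{-1},
\]
by the explicit harmonic-average formula \eqref{eqn:a_0}. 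Rearranging gives $\wh\sigma(s)=\wh a_0(s)\,\wh b(s)$.

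To close the argument I would compare with the definition \eqref{eqn:Lap_inv}, which presents $\Psi_0^\dagger$ as the operator sending a strain history $e$ to $\mathcal{L}^{-1}\bigl(\wh a_0(s)\,\wh e(s)\bigr)$. The boundary datum plays precisely the role of the strain argument because the cell-averaged strain is $\int_0^1\p_y u\,dy=u(1,t)-u(0,t)=b(t)$. Inverting the relation $\wh\sigma(s)=\wh a_0(s)\,\wh b(s)$ therefore gives $\sigma(t)=\mathcal{L}^{-1}\bigl(\wh a_0(s)\,\wh b(s)\bigr)=\Psi_0^\dagger\bigl(b(t),\p_t b(t),\{b(t)\}_{t=0}^{T},t\bigr)$, which is the claim.

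The hard part is conceptual rather than computational: one must correctly identify the boundary displacement $b$ with the macroscopic strain that $\Psi_0^\dagger$ ingests, which rests on the averaging identity $b(t)=\int_0^1\p_y u\,dy$ noted above. The remaining issues are technical and routine. Assumptions \ref{ass:stab} (in particular $E^-,\nu^->0$) keep $\wh a(s,\cdot)$ bounded away from zero for $s$ in the relevant right half-plane, so that $\wh\sigma(s)/\wh a(s,y)$ is well defined and the harmonic-average integral converges; and one must assume enough regularity and decay of $b$, together with well-posedness of \eqref{eqns:OG}, to justify taking the Laplace transform and its inverse. These parallel the manipulations already used to derive \eqref{eqn:a_0}.
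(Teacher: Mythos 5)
Your proof is correct and follows essentially the same route as the paper's: Laplace transform in time (using the zero initial condition), exploit $\p_y\wh\sigma=0$ to pull $\wh\sigma$ out of the cell integral, identify $\wh b(s)=\wh\sigma(s)\int_0^1 \wh a(s,y)^{-1}\,dy$ via the averaging identity $b(t)=\la \p_y u\ra$, and invert using \eqref{eqn:a_0} and \eqref{eqn:Lap_inv}. Your write-up is simply a more detailed version of the paper's argument, making explicit the role of the initial condition and the harmonic-average structure.
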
 The proof can be found in Appendix \ref{sapdx:FBP} and justifies the application of data resulting from this problem to the homogenized model. 
In the following, we denote by $(\widehat{\sigma}_0)_n$ and $\widehat{\dot{\xi}}_n$ the output of $\mcF^{RNN}$ and $\mcG^{RNN}$ on data point $n$:
\begin{align*}
    (\widehat{\sigma}_0)_n(t) & = \mcF^{RNN}\left((\p_x u_0)_n(t), (\p_{xt}^2 u_0)_n(t),\widehat{\xi}_n(t) \right)\\
    \widehat{\dot{\xi}}_n(t) & = \mcG^{RNN}\left((\p_x u_0)_n(t), \widehat{\xi}_n(t)\right), \quad \widehat{\xi}_n(0) = 0.
\end{align*}
To train the RNN, we use the following relative $L^2$ loss function, which should be viewed as
a function of the parameters defining the neural networks $\mcF^{RNN}, \mcG^{RNN}.$

\textbf{Accessible Loss Function:}
\begin{equation}\label{eqn:loss_accs}
   L_1(\{\sigma_0\}_{n=1}^N, \{\widehat{\sigma}_0\}_{n=1}^N)  = \frac{1}{N}\sum_{n=1}^N \frac{\|(\sigma_0)_n -(\widehat{\sigma}_0)_n\|_{L^2(\mcT;\bbR)}}{\|(\sigma_0)_n\|_{L^2(\mcT;\bbR)}}.
\end{equation}
\begin{remark}
To test robustness of our conclusions, we also employed relative and absolute $L^2$ squared 
loss functions. In doing so we did not observe significant differences in the predictive accuracy of the resulting models.
\end{remark}

In the case of a material that is $2$-piecewise-constant on the microscale, we can explicitly write down the analytic form of the solution, and thus can also know the values of the hidden variable $\{\xi_n\}_{n=1}^N$ and its derivative $\{\dot{\xi}_n\}_{n=1}^N$, for each data trajectory as expressed in equation \eqref{eqn:psiPC}. It is intuitive that
training an RNN on an extended data set which includes the hidden variable should
be easier than using the original data set \eqref{eq:datatt}. In order to deepen our understanding of the training process we will include training in a  
$2$-piecewise-constant which uses this hidden data, motivating
the following loss function. Since, in general, the hidden variable is inaccessible in the data, we refer to the resulting loss as the inaccessible relative loss function:

\textbf{Inaccessible Loss Function:}\label{eqn:loss_inacc}
\begin{equation}
\label{eqn:loss_inacc}
   L_2(\{(\sigma_0)_n\}_{n=1}^N, \{(\widehat{\sigma}_0)_n\}_{n=1}^N, \{\dot{\xi}_n\}_{n=1}^N, \{\widehat{\dot{\xi}}_n\}_{n=1}^N)  = \frac{1}{N}\sum_{n=1}^N\left( \frac{\|(\sigma_0)_n -(\widehat{\sigma}_0)_n\|_{L^2(\mcT;\bbR)}}{\|(\sigma_0)_n\|_{L^2(\mcT;\bbR)}} + \frac{\|\dot{\xi}_n -\widehat{\dot{\xi}}_n\|_{L^2(\mcT;\bbR)}}{\|\dot{\xi}_n\|_{L^2(\mcT;\bbR)}}\right).
\end{equation}

This inaccessible loss function helps identify the RNN whose existence in proved in previous sections. 


\section{Numerical Results}\label{sec:Numerics}

The numerical results make the following contributions, all of which guide the
use of machine-learned constitutive models within complex nonlinear problems beyond the
one-dimensional linear viscoelastic setting considered in this work.
\begin{enumerate}[I.]
    \item {\bf Machine-Learned Constitutive Models.} \label{bul:numpt1} We can find  RNNs that yield low-error simulations when used as a surrogate model in the macroscopic 
    system \eqref{eqns:general-dynamics-homog},\eqref{eq:MARKOV} to approximate
    the multiscale system \eqref{eqns:general-dynamics}, in the one dimensional
    KV setting with inertia. We also discuss how in some material parameter settings, inertial effects lead to higher error in the homogenized approximation.
    \item {\bf Choice of Training Data.} \label{bul:numpt2}We describe our choice of data sampling distribution $\mu$ and show that it exhibits desirable properties.
    \item {\bf Effect of Non-Convex Optimization}\label{bul:numpt3}
    When the inaccessible loss function is used for training, the trained RNN exhibits desirable properties of (approximate) linearity in its arguments in the domain of interest, as is proved for the homogenized constitutive model \eqref{eqns:alphaellbetaell} for piecewise-constant materials. When using the accessible loss function, the trained RNN may perform well as a surrogate model without exhibiting linearity in the equation for evolution of the hidden variables. This is attributable to the
    existence of local minimizers of the loss function and highlights the need for caution in
    training constitutive models.
    \item {\bf Model Choice.} \label{bul:numpt4}The correct choice of architecture for the RNN leads to discretization-robustness in time: a model learned with one choice of time discretization 
    $dt$ performs well when tested on another $dt$; this is not true for poor model choices.
    Discretization-robustness can thus be used as a guide to model choice.
\end{enumerate}

In Subsection \ref{ssec:RNN_macro} we demonstrate that in appropriate settings the solution $u_0^{RNN}$ obtained under the dynamics of a trained RNN approximates the true solution $u_{\eps}$ well when used in the macroscopic setting; furthermore,
this RNN is shown to exhibit linearity in its arguments in the domain of interest. We also discuss the error arising from inertial effects. In Subsection \ref{ssec:RNN_standard} we discuss the performance of an RNN learned using the accessible loss function. In Subsection \ref{ssec:discret_invar} we discuss the discretization-robustness property of the RNN and the choice of data sampling distribution $\mu$.

\FloatBarrier
\subsection{RNN as Surrogate Model}\label{ssec:RNN_macro}
In this subsection we discuss two RNNs: RNN ``A" trained using only the inaccessible loss function in equation \eqref{eqn:loss_inacc} and RNN ``B" trained with the standard loss function in equation \eqref{eqn:loss_accs}, but initialized at parameters obtained via training with the inaccessible loss function. Descriptions of these RNNs, and others we will introduce in subsequent subsections, may be found in Table \ref{tab:RNNs}. 
For details on RNN training, see Appendix \ref{assec:RNN_train_test}.

\begin{table}
\centering
    \begin{tabular}{|c|p{14cm}|}
    \hline
        \textbf{RNN} & \textbf{Description} \\
        \hline
        \textbf{A} & Trained on $2$-piecewise-constant media only with inaccessible loss function \eqref{eqn:loss_inacc} \\
        \hline
        \textbf{B} & Trained on $2$-piecewise-constant media; initialized at solution found with inaccessible loss function then trained  with accessible loss function \eqref{eqn:loss_accs}\\
        \hline
        \textbf{C} & Trained on $2$-piecewise-constant media only with accessible loss function \\
        \hline
        \textbf{D} & Trained on continuous media with accessible loss function\\
        \hline
    \end{tabular}
    \caption{RNN Descriptions}
    \label{tab:RNNs}
\end{table}
In the first surrogate model experiment, we subjected the material to sinusoidal boundary forcing of $b(t) = 0.1\sin(2\pi t)$ starting from $0$ initial displacement and velocity. As a ground-truth comparison, we used a traditional finite element solver with periodic domain of width $0.04$, spatial resolution of $h = 0.005$, and time discretization $dt = 0.1h^2$; we refer to this solution as $u_{\eps}$ and name it FEM. In contrast, the RNN-based macroscale computation  employs spatial resolution of $h_{cell} = 0.04$, with a time discretization of $dt = 0.4h_{cell}^2$; for economy of notation; this solution is denoted $u_0^{RNN}$ and named
RNN. We also compared the results to the displacement obtained using as macroscale constitutive model the analytic solution to the cell problem. To
make comparisons we use the relative error given by
\begin{equation}
    e(u_0,u_{\eps})(t) = \frac{\|u_{\eps}(t) - u_0^{RNN}(t)\|_{L^2(\mathcal{D};\bbR)}}{\|u_{\eps}(t)\|_{L^2(\mathcal{D};\bbR)} + 0.01}.
\end{equation}

The relative error plots for RNNs ``A" and ``B" are shown in Figures \ref{fig:macro_sim_sinuA} and \ref{fig:macro_sim_sinuB}. 
In a second experiment, we subjected the material to integrated Brownian motion forcing starting from null initial conditions. The FEM solver uses the same discretizations as in the sinusoidal forcing experiment, and the RNN spatial discretization was $h_{cell} = 0.05$ with time discretization of $dt = 0.4h_{cell}^2$. The results for RNNs ``A" and ``B" are shown in Figures \ref{fig:macro_sim_brownianA} and \ref{fig:macro_sim_brownianB}.

Both sets of experiments show that the RNN-based macroscopic models accurately 
reproduce the microscale FEM simulation at far lower computational cost. The RNN-based
results have some errors in comparison with the microscale simulation,
but the errors are of the same order of magnitude as the errors arising when the exact homogenized constitutive model is used. The initial error between the analytic solution and the FEM solution is due to inertial effects discussed in Remark \ref{rem:inertia}. The inertial errors become more significant with the ratio between $E$ and $\nu$ varies more across the interval. Relative error results for an RNN trained on a material with more inertial effects is shown in Figure \ref{fig:RNN_inertial}.

Both RNNs also exhibit the desirable property of linearity in the inputs in an appropriate domain as discussed in Section \ref{ssec:RNN_Optimization}; this is presented in Figure \ref{fig:RNN_behavior}.

\begin{figure}[hb!]
        \begin{subfigure}[b]{0.32\textwidth}
            \includegraphics[width = \textwidth]{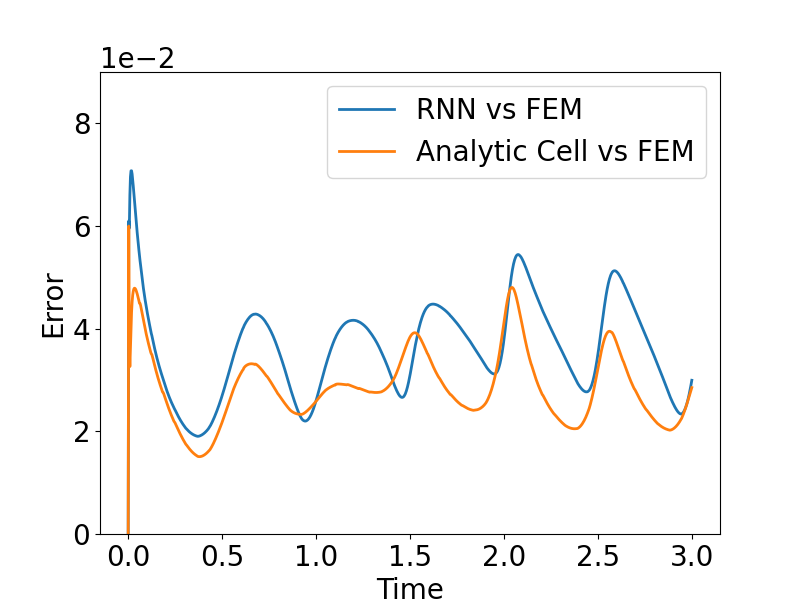}
            \caption{RNN ``A"}
            \label{fig:macro_sim_sinuA}
        \end{subfigure}
        \begin{subfigure}[b]{0.32\textwidth}
            \includegraphics[width = \textwidth]{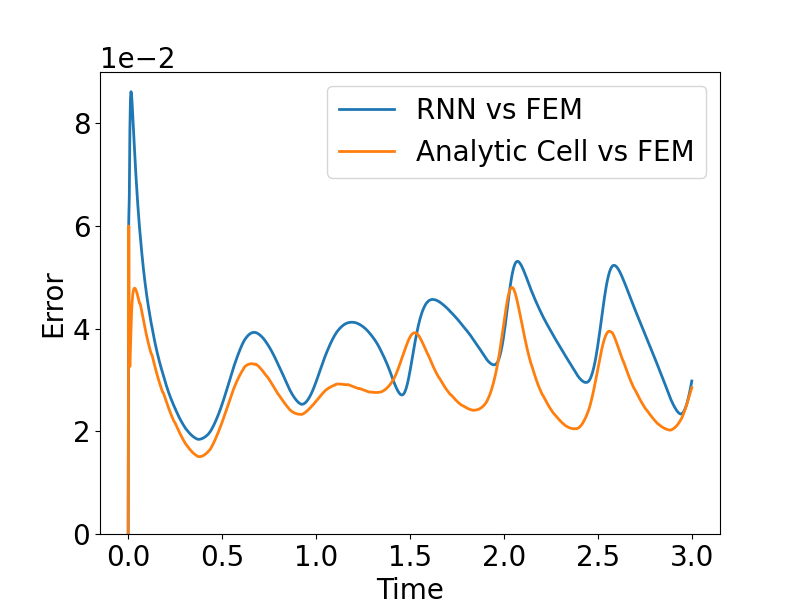}
            \caption{RNN ``B"}
            \label{fig:macro_sim_sinuB}
        \end{subfigure}
        \begin{subfigure}[b]{0.32\textwidth}
            \includegraphics[width = \textwidth]{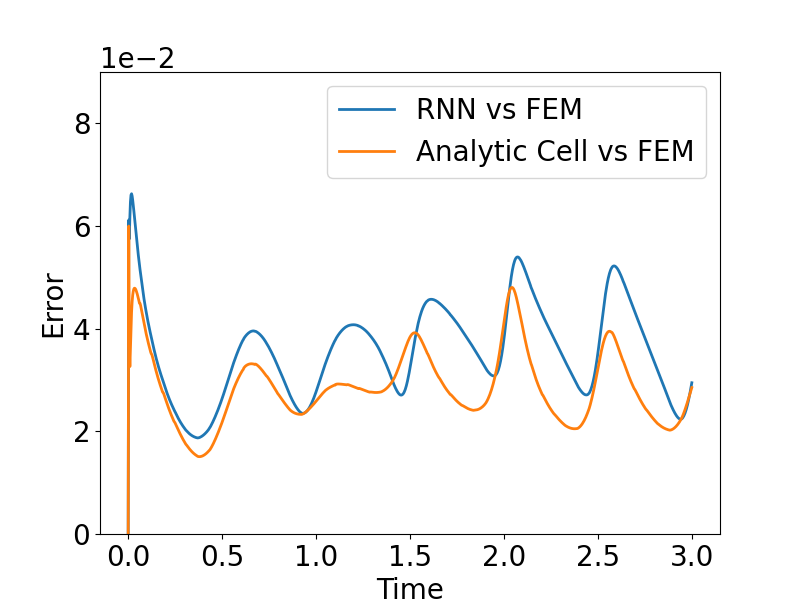}
            \caption{RNN ``C"}
            \label{fig:macro_sim_sinuC}
        \end{subfigure}
        \centering
        \caption{Analytic cell and RNN relative error versus FEM solution using sinusoidal forcing; this supports Numerical Experiments, conclusion \ref{bul:numpt1}.}
        \label{fig:macro_sim_sinu}
\end{figure}

\begin{figure}[ht!]
        \begin{subfigure}[b]{0.32\textwidth}
            \includegraphics[width = \textwidth]{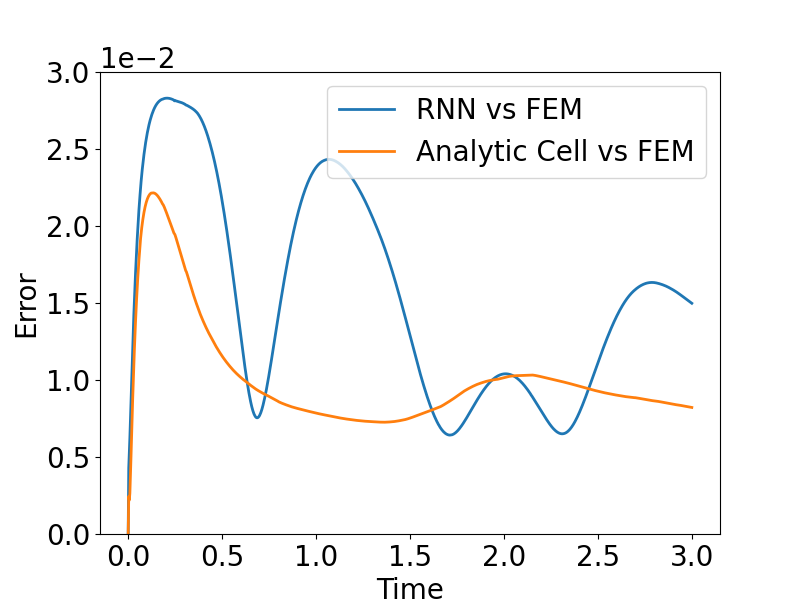}
            \caption{RNN ``A"}
            \label{fig:macro_sim_brownianA}
        \end{subfigure}
        \begin{subfigure}[b]{0.32\textwidth}
            \includegraphics[width = \textwidth]{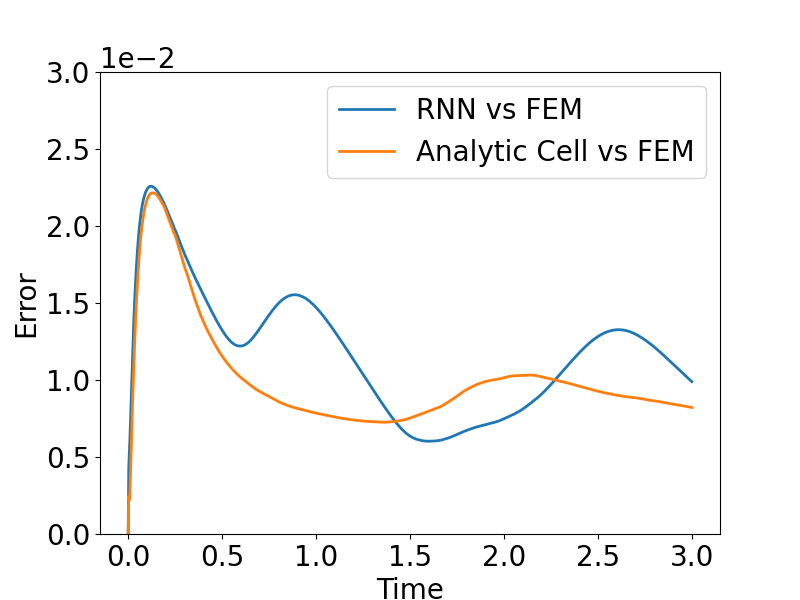}
            \caption{RNN ``B"}
            \label{fig:macro_sim_brownianB}
        \end{subfigure}
        \begin{subfigure}[b]{0.32\textwidth}
            \includegraphics[width = \textwidth]{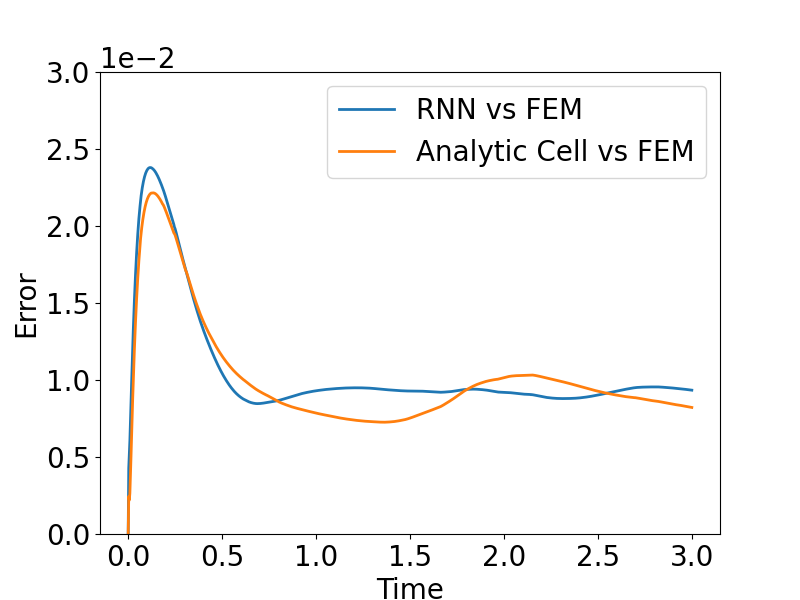}
            \caption{RNN ``C"}
            \label{fig:macro_sim_brownianC}
        \end{subfigure}
        \centering
        \caption{Analytic cell and RNN relative error versus FEM solution using integrated Brownian motion forcing; this supports Numerical Experiments, conclusion \ref{bul:numpt1}.}
        \label{fig:macro_sim_brownian}
\end{figure}

\begin{figure}[ht!]
        \begin{subfigure}[b]{0.49\textwidth}
            \includegraphics[width = \textwidth]{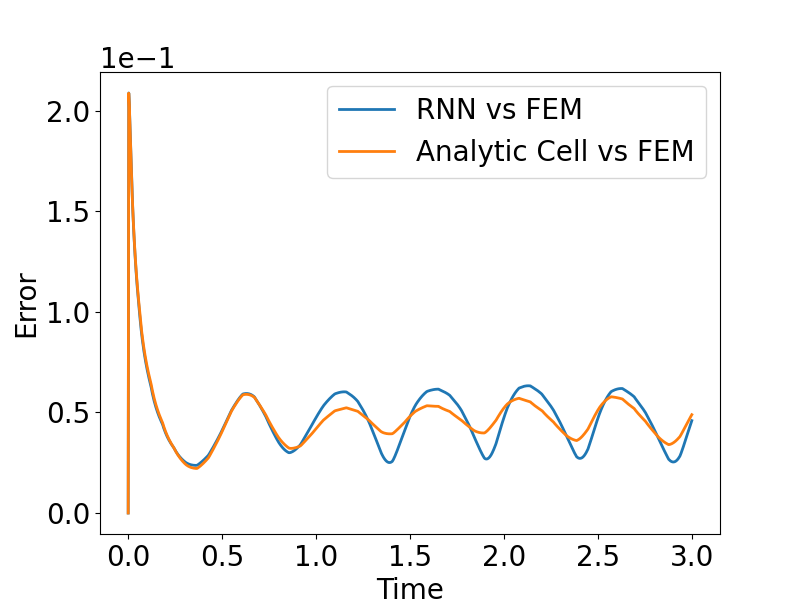}
            \caption{Sinusoidal forcing}
        \end{subfigure}
        \begin{subfigure}[b]{0.49\textwidth}
            \includegraphics[width = \textwidth]{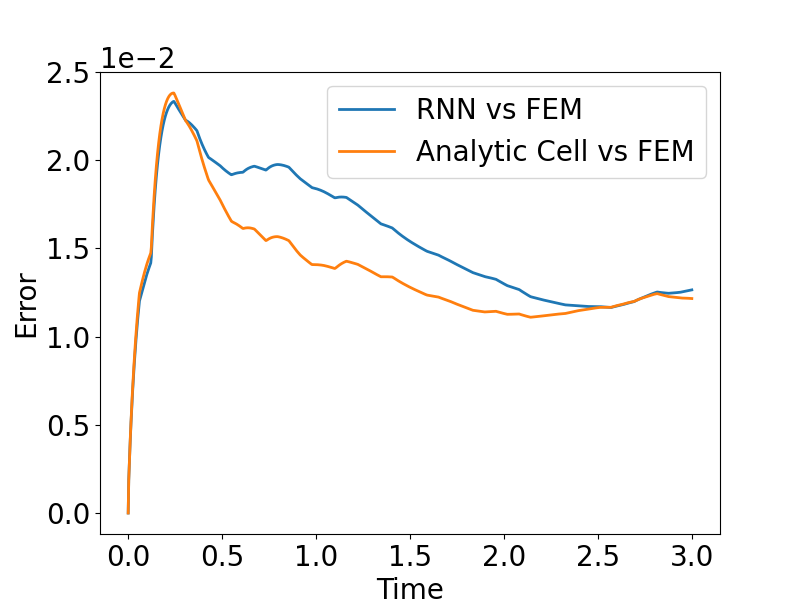}
            \caption{Integrated Brownian motion forcing}
        \end{subfigure}
        \centering
        \caption{Relative error of RNN trained on material parameters with higher inertial effects in response to sinusoidal and integrated Brownian motion forcing; this demonstrates Numerical Experiments, conclusion \ref{bul:numpt1}.}
        \label{fig:RNN_inertial}
\end{figure}

\begin{figure}
    \centering
    \includegraphics[width = \textwidth]{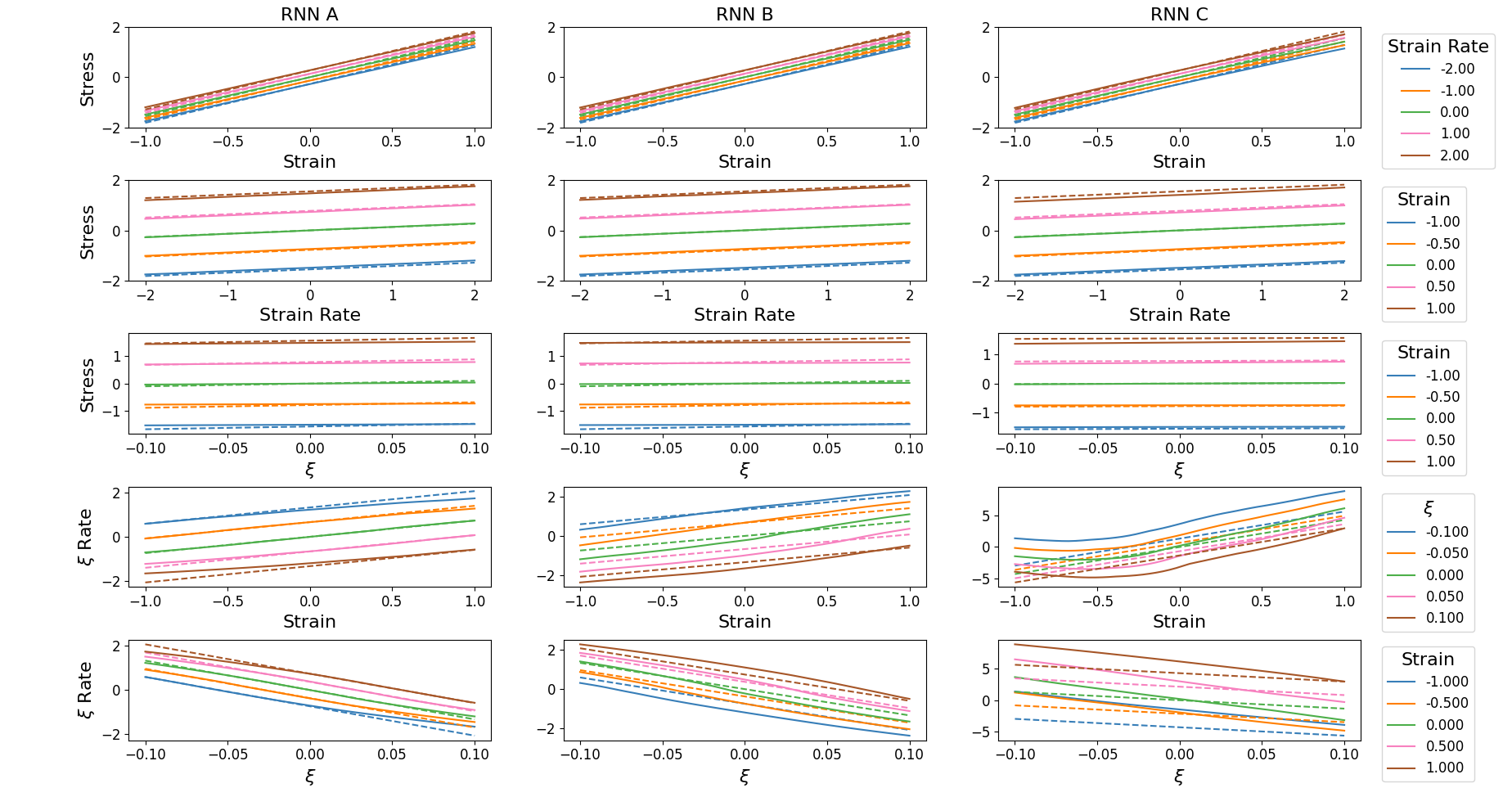}
    \caption{RNN outputs versus the truth (dashed) for each of the three candidate RNNs. The columns correspond to RNNS ``A," ``B," and ``C" respectively. The first row shows the strain-stress dependence for five fixed strain rate inputs. The second row shows the strain rate-stress dependence for five fixed strain inputs. The third row shows the $\xi$, stress relationships for hidden variable $\xi$ for five fixed strain inputs. The fourth row shows the strain, $\dot{\xi}$ relationship for five different fixed values of $\xi$. Finally, the fifth row shows the $\xi,\dot{\xi}$ relationship for five fixed strain inputs. This supports Numerical Experiments, conclusion \ref{bul:numpt3}.}
    \label{fig:RNN_behavior}
\end{figure}

\FloatBarrier

\subsection{RNN Trained with Standard Loss}\label{ssec:RNN_standard}

We also trained a third RNN, denoted RNN ``C," using only the accessible loss function. Details of training may be found in Appendix \ref{assec:RNN_train_test}. The performance of this RNN as a surrogate model in the macroscale simulation experiments may be seen in Figures \ref{fig:macro_sim_sinuC} and \ref{fig:macro_sim_brownianC}. While this RNN performed well as a surrogate model, indeed is comparable in errors
to those of RNNs ``A" and ``B", it does not exhibit a close linear match to the known analytic expression for $\mcG$, as shown in Figure \ref{fig:RNN_behavior}. In the figure, all three RNNs approximate the linear structure of $\mathcal{F}$ well; the difficulty is in obtaining the correct linear dependence in the hidden variable rate, $\dot{\xi}$. Interestingly, by changing the material parameter $\nu_2$ from $0.2$ to $2$, training via the method of RNN ``C" with only the accessible loss function yields an RNN that matches the true linear dependence in $\mathcal{G}$ very well. However, in this parameter regime, inertial effects perturb the simulations on the macroscale to an unacceptable degree,  meaning that the homogenization theory that we use
as benchmark is not valid, and so we avoid this regime. The inability of RNN ``C" to capture the exact linear dependence in $\mathcal{G}$ is unsurprising; indeed, had we guaranteed convergence to the optimal function for any choice of material parameters, we would have entirely sidestepped the problem of high-dimensional optimization inherent to machine learning. 
\begin{figure}[hb!]
        \begin{subfigure}[b]{0.49\textwidth}
            \includegraphics[width = \textwidth]{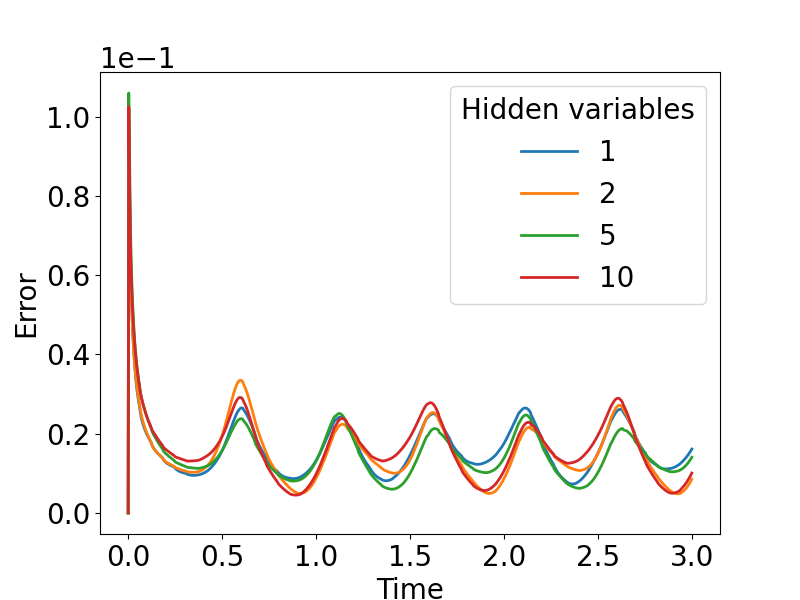}
            \caption{Sinusoidal forcing}
        \end{subfigure}
        \begin{subfigure}[b]{0.49\textwidth}
            \includegraphics[width = \textwidth]{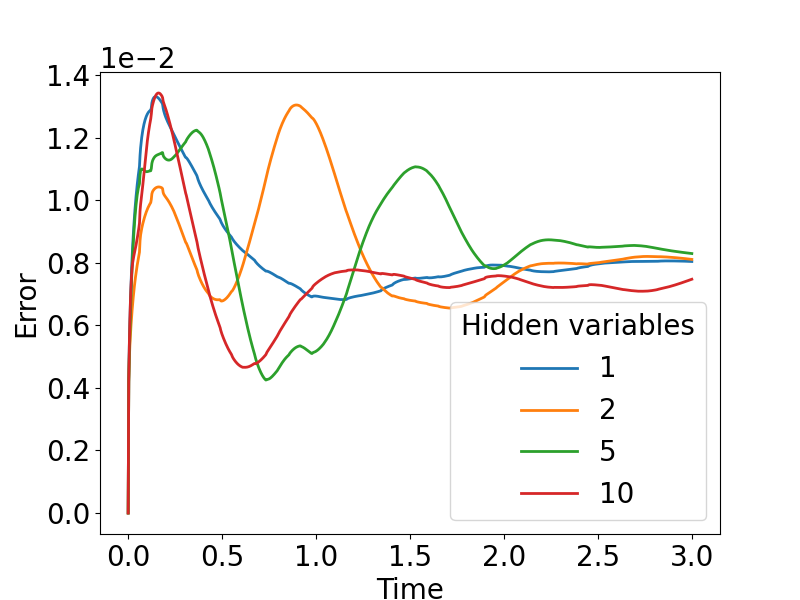}
            \caption{Integrated Brownian motion forcing}
        \end{subfigure}
        \centering
        \caption{Relative error of continuous-material RNNs ``D" with different numbers of hidden variables when used as a surrogate model in a the macroscale system; this supports Numerical Experiments, conclusion \ref{bul:numpt1}.}
        \label{fig:RNND}
\end{figure}
In the case of continuous material properties, we do not have a known analytic solution to the microscale problem and thus do not have access to the hidden variable $\xi$ in the train and test data; in this case, we may only use the accessible loss function. We trained RNNs, denoted RNN type ``D" on continuous media with different numbers of hidden variables $\xi_{\ell}$ and used the trained RNNs as surrogate models in the macroscale system subjected to boundary forcing. Training details may be found in Appendix \ref{assec:RNN_cont}. The relative error of RNN ``D" for the sinusoidal and Brownian motion forcing experiments described previously is shown in Figure \ref{fig:RNND}. We note that similar error is found with all dimensions of the
hidden variable, suggesting that $1$ hidden variable suffices in this case; the
fact that the error does not decrease suggests that the error we see is primarily
from homogenization rather than piecewise-constant approximation.

\FloatBarrier
\subsection{Time Discretization and RNN Training}\label{ssec:discret_invar}
Discretization-robustness is a desirable feature of an RNN surrogate model. To test robustness to time discretization we work in the piecewise-constant media setup leading to RNNs ``A'', ``B'' and ``C''. We evaluated the test error when employing each of the three RNNs using values of timestep $dt$ different from
those used in the training. Additionally, to demonstrate the value of postulating
the correct model form, we trained three additional RNNs via the same methods as described in Subsection \ref{ssec:RNN_standard} but without strain rate dependence.
Figure \ref{fig:time_disc} shows that all three RNNs trained with strain rate as an input parameter were more robust to changes in time discretization than their non-strain-rate counterparts. 

\begin{figure}[ht!]
        \begin{subfigure}[b]{0.32\textwidth}
            \includegraphics[width = \textwidth]{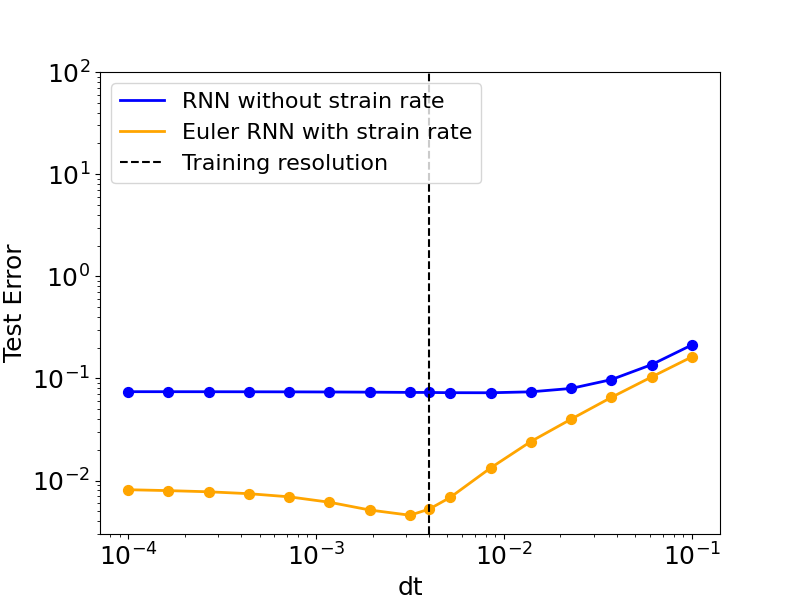}
            \caption{RNN ``A"}
        \end{subfigure}
        \begin{subfigure}[b]{0.32\textwidth}
            \includegraphics[width = \textwidth]{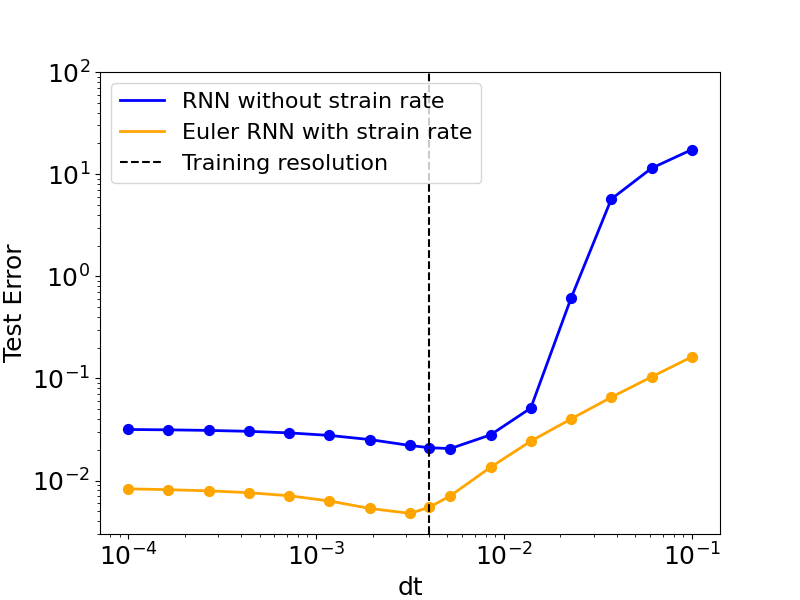}
            \caption{RNN ``B"}
        \end{subfigure}
        \begin{subfigure}[b]{0.32\textwidth}
            \includegraphics[width = \textwidth]{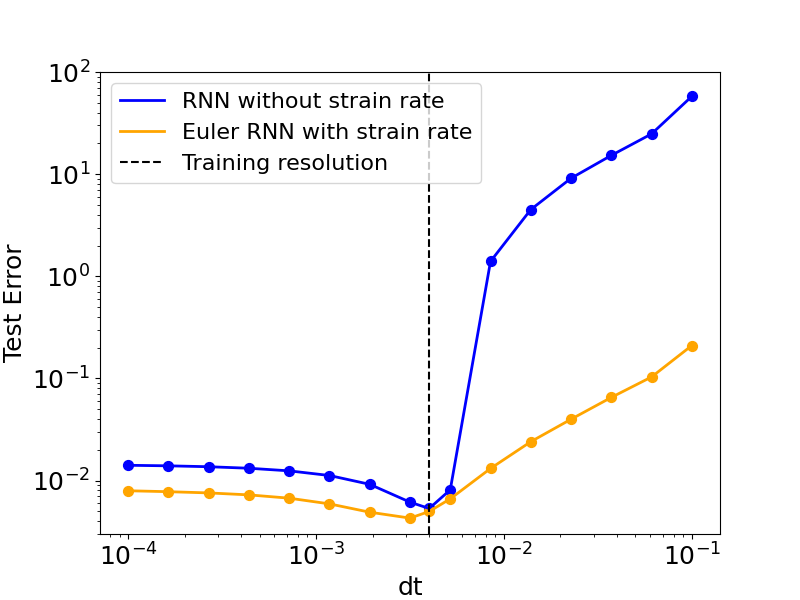}
            \caption{RNN ``C"}
        \end{subfigure}
        \centering
        \caption{Time discretization error for RNNs ``A," ``B," and ``C." This supports Numerical Experiments, conclusion \ref{bul:numpt4}.}
        \label{fig:time_disc}
\end{figure}

\begin{wrapfigure}{R}{0.5\textwidth}
    \centering
    \vspace{-20pt}
    \includegraphics[width = 0.45\textwidth]{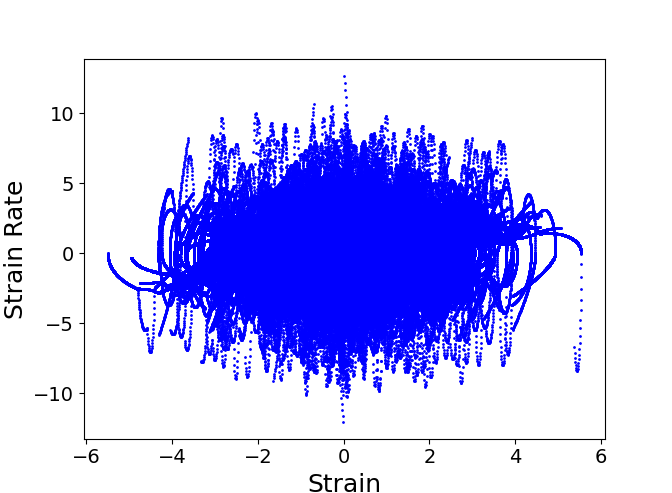}
    \caption{Strain and strain rate distributions in the training data generated by $\mu$; this supports the choice of training data discussed in Numerical Experiments, conclusion \ref{bul:numpt2}.}
    \label{fig:mu}
    \vspace{-20pt}
\end{wrapfigure}

To generate the training strain, we sampled trajectories as follows: first, we randomly partitioned the time interval $\mcT$ into $10$ pieces; second, at each point between these time intervals, we generated a value of strain via a balanced random walk from the previous value scaled by the length of the time interval; third, we used a piecewise cubic hermite interpolating polynomial (pchips) function to interpolate between these values of strain. This choice of distribution has the desirable property that it generates data with a variety of strain/strain-rate pairings evenly dispersed throughout the domain of interest rather than introducing large correlations between the two. A scatterplot of the associated values is shown in Figure \ref{fig:mu}.

\section{Conclusions}
In this paper, we develop theory underlying the learning of Markovian models for history-dependent
constitutive laws. The theory presented applies to the one-dimensional KV case, but the underlying ideas extend to more complex systems. In \cite{biswriting}, numerical experiments suggest that the methodology can be useful in the case of plasticity. Conclusions drawn from our numerical
experiments, underpinned by the theory of this paper and enumerated at the start of Section \ref{sec:Numerics}, provide useful guidance for these more complex nonlinear models in higher spatial dimensions.

Several research directions are suggested by this work. Firstly, when inertial effects are significant, the homogenization theory used in this paper, and underlying the computational work in
\cite{biswriting}, is not valid; extending the methodology to this setting would be useful.
Second, the development of theoretical guidance and methodology for choice of training measure $\mu$
will be very important in this field. Third, as is the case with most machine learning applications, convergence of the RNN to the globally optimal solution is not guaranteed; 
considering learning techniques from reservoir computing could be useful to alleviate this issue
as they lead to convex quadratic optimization problems.

\vspace{0.1in}
\noindent{\bf Acknowledgements} The work of KB, BL and AMS 
was sponsored by the Army Research Laboratory, United States,
and was accomplished under Cooperative Agreement Number W911NF-12-2-0022.
MT is funded by the Department
of Energy Computational Science Graduate Fellowship under Award Number DE-
SC002111.
The authors are grateful to Matt Levine for helpful discussions about
training RNN models and to Pierre Suquet for helpful discussions about 
homogenization theory.
\vspace{0.1in}

\bibliographystyle{unsrt}
\bibliography{refs}

\appendix
\addcontentsline{toc}{section}{Appendices}
\section*{Appendices}

\section{Proofs}\label{apdx:proofs}

\subsection{Proof of Theorem \ref{thm:PC_approx}}\label{sapdx:PC_approx}

The proof of Lemma \ref{lem:u1u2}, which underlies the proof of Theorem \ref{thm:PC_approx},
uses the following two propositions

\begin{restatable}{proposition}{qboundsprop}
\label{prop:bounds_u}
Under Assumptions \ref{ass:stab}, for all solutions $u$ of equation \eqref{eqn:gen_q} the following bounds hold for some constant $C_1$: 
\begin{enumerate}
    \item $\sup_{t \in \mathcal{T}} \|u\|_{H_0^1,\nu}^2 \leq \|u|_{t=0}\|_{H_0^1,\nu}^2+ \left(\frac{\nu^+}{E^-}\right)^2\frac{1}{\nu^-}C_1^2\|f\|^2_{\mathcal{Z}}$
    \item $\sup_{t \in \mathcal{T}} \|u\|_{H_0^1, E}^2 \leq \frac{E_+}{\nu_-}\|u|_{t=0}\|_{H_0^1,\nu}^2+ \left(\frac{\nu^+}{E^-}\right)^2\frac{E^+}{(\nu^-)^2}C_1^2\|f\|_{\mathcal{Z}}^2$
    \item $\|\p_t u\|_{H_0^1, \nu} \leq \frac{C_1\|f\|_{\mathcal{Z}}}{\nu^-} + \frac{E^+}{\nu^-}\|u\|_{H_0^1, E}$, for all ${t\in \mathcal{T}}.$
\end{enumerate}
\end{restatable}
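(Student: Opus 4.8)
The plan is to read the three inequalities as \emph{a priori energy estimates} for the weak form \eqref{eqn:gen_q}, obtained by inserting judicious test functions and then exploiting the norm equivalence of Lemma \ref{lem:h01equi} together with Grönwall's inequality. Throughout, $C_1$ denotes the Poincaré constant on $\mathcal{D}$, which converts the $L^2$ pairing against $f$ into control by an $H_0^1$-seminorm. These manipulations are formal, but can be justified rigorously by carrying them out on a Galerkin approximation and passing to the limit, since testing with $\p_t u$ presumes $\p_t u \in V$.

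For the first two bounds I would test \eqref{eqn:gen_q} with $\varphi = u$. Using $q_{\nu}(\p_t u, u) = \tfrac12\tfrac{d}{dt}\|u\|_{H_0^1,\nu}^2$ and $q_E(u,u) = \|u\|_{H_0^1,E}^2$ gives
\begin{equation*}
\tfrac12\tfrac{d}{dt}\|u\|_{H_0^1,\nu}^2 + \|u\|_{H_0^1,E}^2 = \la f, u\ra .
\end{equation*}
I would estimate $|\la f,u\ra| \le \|f\|\,\|u\| \le C_1\|f\|\,\|u\|_{H_0^1} \le \tfrac{C_1}{\sqrt{\nu^-}}\|f\|\,\|u\|_{H_0^1,\nu}$, and then handle the dissipative term in two moves. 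First, the lower bound of Lemma \ref{lem:h01equi}, $\|u\|_{H_0^1,E}^2 \ge \tfrac{E^-}{\nu^+}\|u\|_{H_0^1,\nu}^2$, makes it coercive in the $\nu$-norm. Second, I would apply Young's inequality to the forcing term with a weight tuned to absorb exactly half of this coercive term, leaving a dissipative differential inequality $\tfrac{d}{dt}y + \tfrac{E^-}{\nu^+}y \le \tfrac{\nu^+ C_1^2}{E^-\nu^-}\|f\|_{\mathcal{Z}}^2$ for $y = \|u\|_{H_0^1,\nu}^2$. Its integrating-factor (Grönwall) solution, whose decay rate $E^-/\nu^+$ supplies the factor $(\nu^+/E^-)^2$ and whose time-uniformity removes any dependence on $|\TT|$, is precisely bound 1. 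Bound 2 then follows at once from the \emph{upper} estimate $\|u\|_{H_0^1,E}^2 \le \tfrac{E^+}{\nu^-}\|u\|_{H_0^1,\nu}^2$ of Lemma \ref{lem:h01equi}: multiplying the right-hand side of bound 1 by $E^+/\nu^-$ reproduces bound 2 exactly.

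Bound 3 is pointwise in $t$ and needs no time integration: I would test \eqref{eqn:gen_q} with $\varphi = \p_t u$, obtaining $\|\p_t u\|_{H_0^1,\nu}^2 = \la f,\p_t u\ra - q_E(u,\p_t u)$. Estimating $|\la f,\p_t u\ra| \le \tfrac{C_1}{\sqrt{\nu^-}}\|f\|\,\|\p_t u\|_{H_0^1,\nu}$ and, via Cauchy--Schwarz for $q_E$ followed by $\|\p_t u\|_{H_0^1,E}\le (E^+/\nu^-)^{1/2}\|\p_t u\|_{H_0^1,\nu}$, $|q_E(u,\p_t u)| \le (E^+/\nu^-)^{1/2}\|u\|_{H_0^1,E}\,\|\p_t u\|_{H_0^1,\nu}$, I would divide through by $\|\p_t u\|_{H_0^1,\nu}$ and use $\|f\|\le\|f\|_{\mathcal{Z}}$ to reach an inequality of the stated shape $\|\p_t u\|_{H_0^1,\nu} \lesssim \|f\|_{\mathcal{Z}} + \|u\|_{H_0^1,E}$.

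I expect the main obstacle to be arranging the constants so that dissipation is genuinely exploited rather than discarded: a naive integration of the energy identity would introduce a spurious factor of $|\TT|$, so the argument must route the dissipative term through the lower norm-equivalence, use Young's inequality to absorb part of it, and then invoke Grönwall, in order to obtain the time-uniform, $T$-free constants displayed in the statement. A secondary technical point is the regularity required to legitimately choose $\varphi=\p_t u$; I would address this by deriving all estimates at the level of a finite-dimensional Galerkin scheme, where $\p_t u$ is admissible, and passing to the limit using weak lower semicontinuity of the norms.
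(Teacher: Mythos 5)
Your proposal is correct and takes essentially the same route as the paper's own proof: test with $\varphi=u$, apply Poincar\'e, the norm equivalence of Lemma \ref{lem:h01equi}, Young's inequality with the same tuned weight $\delta^2 = E^-\nu^-/\nu^+$, and Gr\"onwall for bounds 1--2, then test with $\varphi=\p_t u$, use Cauchy--Schwarz and the norm equivalence, and divide through for bound 3. The only differences are cosmetic: your bound 3 carries the sharper constants $C_1/\sqrt{\nu^-}$ and $(E^+/\nu^-)^{1/2}$ where the paper loosely writes $C_1/\nu^-$ and $E^+/\nu^-$, and your Galerkin remark supplies rigor for the choice $\varphi = \p_t u$ that the paper leaves implicit.
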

\begin{proof}
To show the first bound, let $\varphi = u$ in equation \eqref{eqn:gen_q}. We have
$$q_{\nu}(\p_t u, u) + q_E(u,u) = \la f, u \ra $$
so that 
\begin{align*}
    \frac{1}{2}\frac{d}{dt}\|u\|_{H_0^1, \nu}^2 + \|u\|_{H_0^1, E}^2 &\leq \|f\|_{H^{-1}}\|u\|_{H_0^1}\\
    &\leq C_1\|f\|\|u\|_{H_0^1}
\end{align*}
for some constant $C_1$, by compact embedding. Then, using Lemma \ref{lem:h01equi},
$$
    \frac{1}{2}\frac{d}{dt}\|u\|_{H_0^1, \nu}^2 + \frac{E^-}{\nu^+}\|u\|_{H_0^1, \nu}^2  \leq \frac{C_1}{2\delta^2}\|f\|^2 + \frac{\delta^2}{2\nu^-}\|u\|_{H_0^1,\nu}^2
$$
for any $\delta >0$ by Young's Inequality. Letting $\delta^2 = \frac{E^- \nu^-}{\nu^+}$, we have
\begin{align*}
    \frac{d}{dt}\|u\|_{H_0^1, \nu}^2 + \frac{E^-}{\nu^+}\|u\|_{H_0^1, \nu}^2 & \leq \frac{C_1^2\nu^+}{E^-\nu^-}\|f\|_{\mathcal{Z}}^2.
\end{align*}
Finally, Gronwall's inequality yields 
\begin{equation*}
    \sup_{t \in \mathcal{T}} \|u\|_{H_0^1, \nu}^2  \leq \|u|_{t=0}\|_{H_0^1,\nu}^2+\left(\frac{\nu^+}{E^-}\right)^2 \frac{C_1^2}{\nu^-}\|f\|^2.
\end{equation*}
The second bound follows from Lemma \ref{lem:h01equi}.
For the third bound, let $\varphi = \p_t u$ in equation \eqref{eqn:gen_q}. Then 
$$q_{\nu}(\p_t u, \p_t u) + q_E(u,\p_t u) = \la f, \p_t u \ra$$
so that, again using Lemma \ref{lem:h01equi}, and using the Poincar\'e inequality,
$$\|\p_t u\|_{H_0^1, \nu}^2  \leq \frac{C_1}{\nu^-}\|f\|\|\p_t u\|_{H_0^1, \nu} + \frac{E^+}{\nu^-}\|u\|_{H_0^1, E}\|\p_t u\|_{H_0^1, \nu}$$
and
$$\|\p_t u\|_{H_0^1, \nu} \leq \frac{C_1}{\nu^-}\|f\|_{\mathcal{Z}} + \frac{E^+}{\nu^-}\|u\|_{H_0^1, E}.$$
\end{proof}
Additionally, we need to bound the difference between two solutions $u_1$ and $u_2$ of the PDE in Lemma \ref{lem:u1u2} with different material properties. 
Notice that $u_1$ and $u_2$ satisfy 
\begin{align*}
    \frac{\p}{\p x}\left(E_1 \left(\frac{\p}{\p x} u_1\right)+\nu_1\left(\frac{\p^2}{\p t\p x} u_1\right)\right)& = -f \\
    \frac{\p}{\p x}\left(E_1 \left(\frac{\p}{\p x} u_2\right)+\nu_1\left(\frac{\p^2}{\p t\p x} u_2\right)\right)& = -f + \frac{\p}{\p x}\left[(E_1 - E_2)\frac{\p}{\p x} u_2 + (\nu_1 - \nu_2) \frac{\p^2}{\p t\p x}u_2\right]
\end{align*}
Subtracting yields 
\begin{align*}
    \p_x\left[E_1 \p_x \gamma + \nu_1 \p^2_{t,x} \gamma\right]& = -\p_x\left[(\Delta E)\p_x u_2 + (\Delta \nu)\p^2_{t,x} u\right]
\end{align*}
where $\gamma = u_1-u_2$, $\Delta E = E_1 - E_2$, and $\Delta \nu = \nu_1 - \nu_2$. 
We can rewrite this as an equation for $\gamma$ in weak form: for all test functions $\varphi \in V$ 
\begin{equation}\label{eqn:gen_q_gamma}
    q_{\nu_1}(\p_t \gamma, \varphi) + q_{E_1}(\gamma, \varphi) = \la g, \p_x \varphi \ra, \quad \gamma|_{t=0}=0,
\end{equation}
where $g = \Delta E \p_x u_2 + \Delta \nu\p^2_{t,x}u_2$. For the following discussion of bounds including both $u_1$ and $u_2$, let $E^+ = \max\{E_1^+, E_2^+\}$, $\nu^+ = \max\{\nu_1^+, \nu_2^+\}$, $E^- = \min \{E_1^-, E_2^-\}$, and $\nu^- = \min \{\nu_1^-, \nu_2^-\}$.


\begin{restatable}{proposition}{gammaboundsprop}
\label{prop:bound_u1u2}
Under Assumptions \ref{ass:stab}, for all solutions $\gamma$ of equation \eqref{eqn:gen_q_gamma}, the following bounds hold: 
\begin{enumerate}
    \item $\sup_{t \in \mathcal{T}} \|\gamma\|_{H_0^1,\nu_1}^2 \leq \left(\frac{\nu^+}{E^-}\right)^2\frac{1}{\nu^-}\|g\|_{\mathcal{Z}}^2$
    \item $\sup_{t \in \mathcal{T}} \|\gamma\|_{H_0^1, E_1}^2 \leq \left(\frac{\nu^+}{E^-}\right)^2\frac{E^+}{(\nu^-)^2}\|g\|_{\mathcal{Z}}^2$
    \item $\sup_{t\in \mathcal{T}} \|\p_t \gamma\|_{H_0^1, \nu_1} \leq \frac{\|g\|_{\mathcal{Z}}}{\nu^-} + \frac{E^+}{\nu^-}\|\gamma\|_{H_0^1, E}$
\end{enumerate}
\end{restatable}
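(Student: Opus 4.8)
The plan is to observe that the weak equation \eqref{eqn:gen_q_gamma} governing $\gamma$ has exactly the same structure as the weak equation \eqref{eqn:gen_q} governing $u$ in Proposition \ref{prop:bounds_u}: the material coefficients are $\nu_1, E_1$, the initial datum is zero ($\gamma|_{t=0}=0$), and the only substantive change is that the source enters as $\la g, \p_x\varphi\ra$ rather than $\la f, \varphi\ra$. Consequently I would reproduce, essentially verbatim, the three energy estimates proving Proposition \ref{prop:bounds_u}, tracking how these two differences propagate through the constants. The zero initial datum removes the $\|u|_{t=0}\|$ contribution after Gronwall, and because $\la g, \p_x\varphi\ra \le \|g\|\,\|\p_x\varphi\| = \|g\|\,\|\varphi\|_{H_0^1}$ by Cauchy--Schwarz directly, there is no need to pass through $H^{-1}$ duality, so the embedding constant $C_1$ present in Proposition \ref{prop:bounds_u} is absent here.

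For the first bound I would test \eqref{eqn:gen_q_gamma} with $\varphi = \gamma$, giving the energy identity $\tfrac12\tfrac{d}{dt}\|\gamma\|_{H_0^1,\nu_1}^2 + \|\gamma\|_{H_0^1,E_1}^2 = \la g, \p_x\gamma\ra$. Bounding the right-hand side by $\|g\|\,\|\gamma\|_{H_0^1}$, using Lemma \ref{lem:h01equi} to bound $\|\gamma\|_{H_0^1,E_1}^2$ from below by $\tfrac{E^-}{\nu^+}\|\gamma\|_{H_0^1,\nu_1}^2$ and $\|\gamma\|_{H_0^1}$ from above by a multiple of $\|\gamma\|_{H_0^1,\nu_1}$, and then applying Young's inequality with the same choice $\delta^2 = E^-\nu^-/\nu^+$ used in Proposition \ref{prop:bounds_u}, I absorb half the dissipative term and obtain a differential inequality of the form $\tfrac{d}{dt}\,y + a\,y \le b$ with $y = \|\gamma\|_{H_0^1,\nu_1}^2$, $a = E^-/\nu^+$, and $b$ proportional to $\|g\|_{\mathcal{Z}}^2$ (using $\|g(\cdot,t)\|\le\|g\|_{\mathcal{Z}}$ pointwise in $t$). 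Gronwall's inequality with $y(0)=0$ then yields $\sup_t y \le b/a$, which is precisely the claimed bound 1. Bound 2 follows immediately by applying the upper estimate of Lemma \ref{lem:h01equi}, $\|\gamma\|_{H_0^1,E_1}^2 \le \tfrac{E^+}{\nu^-}\|\gamma\|_{H_0^1,\nu_1}^2$, to bound 1.

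For the third bound I would instead test with $\varphi = \p_t\gamma$, producing $\|\p_t\gamma\|_{H_0^1,\nu_1}^2 = \la g, \p_x\p_t\gamma\ra - q_{E_1}(\gamma,\p_t\gamma)$. I estimate the source term by Cauchy--Schwarz together with the norm equivalences of Lemma \ref{lem:h01equi}, and the term $q_{E_1}(\gamma,\p_t\gamma)$ by Cauchy--Schwarz in the $q_{E_1}$ inner product together with the same equivalences; after dividing through by one factor of $\|\p_t\gamma\|_{H_0^1,\nu_1}$, I recover the stated bound 3, whose $\|\gamma\|_{H_0^1,E}$ term is then controlled by bound 2. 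There is no genuine obstacle here: the argument is a transcription of Proposition \ref{prop:bounds_u}. The only points requiring care are bookkeeping---making the Young's-inequality constant choice land exactly on the stated coefficients, keeping the powers of $\nu^-$, $\nu^+$, $E^-$, $E^+$ consistent across the norm conversions, and correctly passing from the time-pointwise source norm $\|g(\cdot,t)\|$ to the $\mathcal{Z}$-norm $\|g\|_{\mathcal{Z}}$ before invoking Gronwall.
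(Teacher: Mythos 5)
Your proposal is correct and follows the paper's own proof essentially verbatim: the same test functions $\varphi=\gamma$ and $\varphi=\p_t\gamma$, the same use of Lemma \ref{lem:h01equi} and Young's inequality with $\delta^2 = E^-\nu^-/\nu^+$, and Gronwall's inequality exploiting $\gamma|_{t=0}=0$, with bound 2 deduced from bound 1 by norm equivalence. Your observations that the zero initial datum removes the $\|u|_{t=0}\|$ term and that testing against $\p_x\varphi$ eliminates the embedding constant $C_1$ are exactly the two differences from Proposition \ref{prop:bounds_u} that the paper's argument reflects.
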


\begin{proof}
To show the first bound, let $\varphi = \gamma$ in equation \eqref{eqn:gen_q_gamma}. We have
$$q_{\nu}(\p_t \gamma, \gamma) + q_E(\gamma,\gamma) = \la g, \p_x \gamma \ra $$
so that 
\begin{align*}
    \frac{1}{2}\frac{d}{dt}\|\gamma\|_{H_0^1, \nu}^2 + \|u\|_{H_0^1, E}^2 &\leq \|g\|\|\gamma\|_{H_0^1}\\
\end{align*}
Then
$$
    \frac{1}{2}\frac{d}{dt}\|\gamma\|_{H_0^1, \nu}^2 + \frac{E^-}{\nu^+}\|\gamma\|_{H_0^1, \nu}^2  \leq \frac{1}{2\delta^2}\|g\|^2 + \frac{\delta^2}{2\nu^-}\|\gamma\|_{H_0^1,\nu}^2
$$
for any $\delta >0$ by Young's Inequality. Letting $\delta^2 = \frac{E^- \nu^-}{\nu^+}$, we have
\begin{align*}
    \frac{d}{dt}\|\gamma\|_{H_0^1, \nu}^2 + \frac{E^-}{\nu^+}\|\gamma\|_{H_0^1, \nu}^2 & \leq \frac{\nu^+}{E^-\nu^-}\|g\|_{\mathcal{Z}}^2.
\end{align*}
Finally, since $\gamma(0) = 0$, Gronwall's inequality yields 
\begin{equation*}
    \sup_{t \in \mathcal{T}} \|\gamma\|_{H_0^1, \nu}^2  \leq \left(\frac{\nu^+}{E^-}\right)^2 \frac{1}{\nu^-}\|g\|^2.
\end{equation*}
The second bound follows from Lemma \ref{lem:h01equi}. 
For the third bound, let $\varphi = \p_t \gamma$ in equation \eqref{eqn:gen_q_gamma}. Then 
$$q_{\nu}(\p_t \gamma, \p_t \gamma) + q_E(\gamma,\p_t \gamma) = \la g, \p_{xt}^2 \gamma \ra$$
so that, again using Lemma \ref{lem:h01equi},
$$\|\p_t \gamma\|_{H_0^1, \nu}^2  \leq \frac{1}{\nu^-}\|g\|\|\p_t \gamma\|_{H_0^1, \nu} + \frac{E^+}{\nu^-}\|\gamma\|_{H_0^1, E}\|\p_t \gamma\|_{H_0^1, \nu}$$
and
$$\|\p_t \gamma\|_{H_0^1, \nu} \leq \frac{1}{\nu^-}\|g\|_{\mathcal{Z}} + \frac{E^+}{\nu^-}\|\gamma\|_{H_0^1, E}.$$
\end{proof}

To prove the Lipschitz property of the solution in Theorem \ref{thm:PC_approx}, we will need the following lemma. 
\lemmau*
\begin{proof}
Let $\gamma$ and $g$ be as defined before and after equation
\ref{eqn:gen_q_gamma}. Then, by the result of 
Proposition \ref{prop:bound_u1u2},
\begin{equation*}
\sup_{t \in \mathcal{T}} \|\gamma\|_{H_0^1}^2 \le
   \frac{1}{\nu^-} \sup_{t \in \mathcal{T}} \|\gamma\|_{H_0^1, \nu_1}^2 \leq \left(\frac{\nu^+}{E^-\nu^-}\right)^2\|g\|^2_{\mathcal{Z}}.
\end{equation*}
To bound the RHS: 
\begin{align*}
    \|g\|_{\mathcal{Z}} &= \|(\Delta E)\p_x u_2 + (\Delta \nu)\p^2_{t,x}u_2\|_{\mathcal{Z}} \\
    & \leq \|(\Delta E)\p_x u_2 \|_{\mathcal{Z}} + \|(\Delta \nu)\p^2_{t,x}u_2\|_{\mathcal{Z}}\\
    & \leq \|\Delta E\|_{\infty}\|\p_x u_2\|_{\mathcal{Z}} + \|\Delta \nu\|_{\infty}\|\p^2_{t,x}u_2\|_{\mathcal{Z}}\\
    & \leq 
    \sup_{t \in \mathcal{T}}
    \|u_2\|_{H^1_0}\|\Delta E\|_{\infty} + 
    \sup_{t \in \mathcal{T}}\|\p_{t}u_2\|_{H^1_0}\|\Delta \nu\|_{\infty}\\
    & \leq 
    \frac{1}{(\nu^-)^{\frac12}}\Bigl(\sup_{t \in \mathcal{T}}
    \|u_2\|_{H^1_0,\nu_2}\|\Delta E\|_{\infty} + 
    \sup_{t \in \mathcal{T}}\|\p_{t}u_2\|_{H^1_0,\nu_2}\|\Delta \nu\|_{\infty}\Bigr)\\
    \end{align*}
To bound $\|\p_xu_2\|_{\mathcal{Z}}$ and $\|\p^2_{t,x}u_2\|$, note that any solution $u_2$ will satisfy equation \eqref{eqn:gen_q} for $(u,E,\nu) \mapsto (u_2,E_2,\nu_2)$. 
The analysis of Proposition \ref{prop:bounds_u} yields 
\begin{align*}
     \sup_{t\in \mathcal{T}} \|u_2\|_{H_0^1, \nu_2} &\leq \|u|_{t=0}\|_{H_0^1,\nu}+ \left(\frac{\nu^+}{E^-}\right)\frac{C_1}{(\nu^-)^{1/2}}\|f\|_{\mathcal{Z}}\\
\end{align*}
and
\begin{align*}
    \sup_{t\in \mathcal{T}} \|\p_t u_2\|_{H_0^1, \nu_2} &\leq \frac{C_1\|f\|_{\mathcal{Z}}}{\nu^-} + \frac{E^+}{\nu^-}\|u_2\|_{H_0^1, E}\\
    & \leq \frac{C_1}{\nu^-}\|f\|_{\mathcal{Z}} + \left(\frac{E^+}{\nu^-}\right)^{3/2}\|u|_{t=0}\|_{H_0^1,\nu} + \frac{(E^+)^{3/2}}{(\nu^-)^2}\frac{\nu^+}{E^-}C_1\|f\|_{\mathcal{Z}}.
\end{align*}
By the Poincar\'e inequality, $\|\gamma\|_{\mathcal{Z}} \leq C_p \sup_{t \in \mcT} \|\gamma\|_{H_0^1}$ for some constant $C_p$ and setting \begin{align*}C = C_p\left(\frac{\nu^+}{E^-(\nu^-)^{\frac32}}\right)\max\Bigr\{&\|u|_{t=0}\|_{H_0^1,\nu}+ \left(\frac{\nu^+}{E^-}\right)\frac{C_1}{(\nu^-)^{1/2}}\|f\|_{\mathcal{Z}},\\ \;&\frac{C_1}{\nu^-}\|f\|_{\mathcal{Z}} + \left(\frac{E^+}{\nu^-}\right)^{3/2}\|u|_{t=0}\|_{H_0^1,\nu} + \frac{(E^+)^{3/2}}{(\nu^-)^2}\frac{\nu^+}{E^-}C_1\|f\|_{\mathcal{Z}}\Bigr\}\end{align*} gives the result.

\end{proof}

Now we can prove the piecewise constant approximation theorem. 
\PCapprox*

\begin{proof} 

Let $\mathcal{A}_{E}$ and $\mathcal{A}_{\nu}$ be the finite sets of discontinuities of $E_{\eps}$ and $\nu_{\eps}$ respectively, and let $\mathcal{A} = \mathcal{A}_E \cup \mathcal{A}_{\nu}$ with elements $a_1, a_2, \dots, a_K$. Partition the interval $\mathcal{D}$ into intervals $D_1 = (a_0,a_1), D_2 = [a_1,a_2),\dots D_K = [a_{K-1},a_K)$ such that $\bigcup_{k=1}^K D_k = \mathcal{D}$ and $\bigcap_{k=1}^K D_k  = 0$. Let $B_{k,\delta} = \{b^k_0, b^k_1, \dots, b^k_{N(\delta)}\}$ be a uniform partition of $D_k$ such that $b^k_i - b^k_{i-1} = \delta$. Furthermore, define $E_{\eps}^{PC}$ and $\nu_{\eps}^{PC}$ via 
\begin{align*}
    E_{\eps}^{PC}(x) &= \sum_{k =1}^K \sum_{n = 1}^N \mathds{1}_{x \in (b^k_{n-1},b^k_n]}E\left(\frac{1}{2}b^k_{n-1}+\frac{1}{2}b^k_n\right) \\
     \nu_{\eps}^{PC}(x) &= \sum_{k =1}^K \sum_{n = 1}^N \mathds{1}_{x \in (b^k_{n-1},b^k_n]}\nu\left(\frac{1}{2}b^k_{n-1}+\frac{1}{2}b^k_n\right)
\end{align*}
for $x \in \mathcal{D}$, noting that $E_{\eps}^{PC}$ and $\nu_{\eps}^{PC}$ are piecewise constant with $KN(\delta)$ pieces. 

$E_{\eps}$ and $\nu_{\eps}$ are continuous on each interval $D_k$, so for each $\delta' > 0$, there exists a mesh width $\delta$ such that with partitions $\{B_{k,\delta}\}_{k=1}^K$
\begin{align*}
    \sup_{x \in (b_{n-1}^k, b_n^k]} \|E_{\eps}\left(\frac{1}{2}b^k_{n-1}+\frac{1}{2}b^k_n\right) - E_{\eps}(x)\| &< \delta'\\
    \sup_{x \in (b_{n-1}^k, b_n^k]} \|\nu_{\eps}\left(\frac{1}{2}b^k_{n-1}+\frac{1}{2}b^k_n\right) - \nu_{\eps}(x)\| &< \delta'
\end{align*}
for all $n \in \{1,\dots, N(\delta)\}$. Thus, $\|E^{PC} -E\|_{\infty} < \delta '$ and $\|\nu^{PC} - \nu\|_{\infty} < \delta'$. Since $\delta ' $ was arbitrary, we can pick $\delta ' < \frac{\eta}{C_1}$ where $C_1$ is as in Lemma \ref{lem:u1u2}, and the theorem follows by use of the same lemma. 
\end{proof}

\subsection{Proof of Theorem \ref{thm:PC_exact}}\label{sapdx:PC_exact}
We will need the following lemma:
\begin{restatable}[Existence of Exact Parametrization]{lemma}{exactpieceparam}\label{lem:exactpieceparam}
For a piecewise constant material with $L' + 1$ pieces and under Assumptions \ref{ass:stab}, $a_0$ in equation \eqref{eqn:a_0} can be written exactly as 
$$ \widehat{a}_0(s) = E' + \nu' s - \sum_{\ell =1}^{L'} \frac{\beta_{\ell}}{s+ \alpha_{\ell}} $$
where $E', \nu', \beta_{\ell} \in \bbR$ and $\alpha_{\ell} \in \bbR_{+}$ for all $\ell \in [L']$. 
\end{restatable}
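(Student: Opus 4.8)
The plan is to exploit the explicit harmonic-average formula \eqref{eqn:a_0} for $\wh{a}_0$, which for piecewise-constant media collapses the integral into a finite sum, and then to analyze the resulting rational function of $s$. Write the material as taking values $(E_j,\nu_j)$ on $L'+1$ subintervals of the unit cell of lengths $\phi_j$ with $\sum_j \phi_j = 1$. Then \eqref{eqn:a_0} gives
$$\wh{a}_0(s) = \Bigl(\sum_{j=1}^{L'+1} \frac{\phi_j}{s\nu_j + E_j}\Bigr)^{-1} = \Bigl(\sum_{j=1}^{L'+1} \frac{c_j}{s + d_j}\Bigr)^{-1}, \qquad c_j := \frac{\phi_j}{\nu_j}>0,\quad d_j := \frac{E_j}{\nu_j}>0,$$
where positivity of $c_j,d_j$ follows from Assumptions \ref{ass:stab}. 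After combining any pieces that share a common ratio $d_j$, I may assume the $d_j$ are distinct. Putting the sum over a common denominator yields $\sum_j c_j/(s+d_j) = N(s)/D(s)$ with $D(s) = \prod_j (s+d_j)$ monic of degree $L'+1$ and $N(s)$ of degree $L'$ with leading coefficient $\sum_j c_j>0$. Hence $\wh{a}_0 = D/N$ is a rational function whose numerator exceeds its denominator in degree by exactly one, so polynomial division gives $\wh{a}_0(s) = \nu' s + E' + R(s)/N(s)$ with $\deg R < \deg N = L'$; matching behavior as $s\to\infty$ identifies $\nu' = (\sum_j c_j)^{-1} = (\int_0^1 \nu^{-1})^{-1}$, consistent with the constants recorded before \eqref{eqn:homogc_general}.

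The crux is to show that the poles of $\wh{a}_0$, i.e.\ the zeros of $N$, are real, simple, and negative, so that the proper part $R/N$ expands into simple poles of the stated form. This is an interlacing argument. Order $d_1 < \cdots < d_{L'+1}$ and set $g(s) = \sum_j c_j/(s+d_j)$, viewed on the real axis, where the poles sit at $-d_{L'+1} < \cdots < -d_1$. Since $g'(s) = -\sum_j c_j/(s+d_j)^2 < 0$, the function $g$ is strictly decreasing on each interval between consecutive poles; and because every $c_j>0$, on $(-d_{j+1},-d_j)$ we have $g\to+\infty$ as $s\downarrow -d_{j+1}$ and $g\to-\infty$ as $s\uparrow -d_j$. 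By the intermediate value theorem $g$ has exactly one zero in each of the $L'$ such intervals, all lying in $(-d_{L'+1},-d_1)\subset(-\infty,0)$. As $\deg N = L'$, these are all the zeros of $N$ and they are simple. Setting $\alpha_\ell := -(\text{zero}_\ell) > 0$, the partial-fraction expansion of the proper rational function $R/N$ has real residues (because $N$, $R$, and the poles are all real), so $R(s)/N(s) = -\sum_{\ell=1}^{L'} \beta_\ell/(s+\alpha_\ell)$ for suitable $\beta_\ell\in\bbR$, which is exactly the claimed form. If pieces were combined so that fewer than $L'$ distinct poles occur, the missing terms are supplied with $\beta_\ell = 0$ and an arbitrary $\alpha_\ell>0$.

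The main obstacle is the interlacing claim that the poles of $\wh{a}_0$ are real, simple, and negative; everything else—the reduction to a finite sum, the degree count, the polynomial division, and the partial-fraction expansion—is routine algebra once this spectral structure is in hand. The positivity $\alpha_\ell>0$ is precisely what guarantees that the memory kernel obtained upon Laplace inversion is a sum of decaying exponentials, which is what makes the subsequent Markovian internal-variable representation in Theorem \ref{thm:PC_exact} possible.
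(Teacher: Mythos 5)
Your proof is correct, and while the overall skeleton matches the paper's (harmonic-average formula collapses to a finite sum, rewrite as a rational function of $s$, polynomial division to extract $\nu' s + E'$, then partial fractions), you handle the crux step --- that the poles of $\wh{a}_0$ are real and negative --- by a genuinely different argument. The paper works directly with $Q(s) = \sum_{\ell} d_{\ell} \prod_{j\neq \ell}(E_j + \nu_j s)$ and rules out complex roots by explicit complex arithmetic: assuming $a+bi$ is a root with $b \neq 0$, it factors $Q(a+bi)$ and shows the imaginary part of the resulting sum is a sum of strictly negative terms times $b$, hence nonzero; positivity of the coefficients of $Q$ then excludes positive real roots. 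You instead normalize each term to $c_j/(s+d_j)$ and run a real-variable interlacing argument: $g(s)=\sum_j c_j/(s+d_j)$ is strictly decreasing between consecutive poles and sweeps from $+\infty$ to $-\infty$ on each gap, so it has exactly one zero per gap, and degree counting shows these exhaust the zeros of the numerator $N$. Your route buys two things the paper's proof does not explicitly deliver: \emph{simplicity} of the roots, which is tacitly needed for the partial-fraction decomposition to contain only first-order poles (the paper's decomposition $C(s)/Q(s)=\sum_\ell \beta_\ell/(s+\alpha_\ell)$ silently assumes this), and an explicit treatment of the degenerate case where two pieces share the ratio $E_j/\nu_j$, which you resolve by merging terms and padding with $\beta_\ell = 0$; the paper is silent on both points. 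The paper's complex-algebraic computation, on the other hand, requires no ordering or distinctness of the ratios and transfers more directly to the SLS variant in Appendix \ref{assec:SLS}, where the same style of argument is reused.
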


\begin{proof}
Let $E(y)$ and $\nu(y)$ have $L'+1$ constant pieces of lengths $\{d_{\ell}\}_{\ell=1}^{L'+1}$, each associated to values $\{E_{\ell}\}_{\ell=1}^{L'+1}$ and $\{\nu_{\ell}\}_{\ell=1}^{L'+1}$ of $E$ and $\nu$. Then equation \eqref{eqn:a_0}, rewritten here for convenience
\begin{align*}
    \widehat{a}_0(s) & = \left(\int_0^1 \frac{dy}{s\nu(y) + E(y)}\right)^{-1},
\end{align*}
becomes
\begin{align}
    \widehat{a}_0(s) &= \left[ \sum_{\ell = 1}^{L'+1} \frac{d_{\ell}}{E_{\ell}+\nu_{\ell}s} \right]^{-1}\\
    & = \frac{\prod_{\ell=1}^{L'+1} (E_{\ell}+ \nu_{\ell}s)}{\sum_{\ell = 1}^{L'+1} d_{\ell}\;\prod_{j \neq \ell}(E_j + \nu_j s)}\\
    & =  \frac{P(s)}{Q(s)}
\end{align}
where $P(s)$ is a polynomial of degree $L'+1$ and $Q(s)$ a polynomial of degree $L'$. Therefore, there exists a decomposition 
\begin{equation}\label{eqn:decomp}
    \frac{P(s)}{Q(s)} = E' + \nu's - \frac{C(s)}{Q(s)}
\end{equation}
for some constants $E'$ and $\nu'$ and polynomial $C(s)$ of degree $L'-1$. 

Let $-\alpha_1,\dots,-\alpha_{L'}$ be the roots of $Q(s)$. Then $\frac{C(s)}{Q(s)} = \sum_{\ell=1}^{L'} \frac{\beta_{\ell}}{s+\alpha_{\ell}}$ for some constants $\beta_{\ell}\in \bbC$ by partial fraction decomposition. We wish to show that $\Re(\alpha_{\ell}) > 0$ for all roots $-\alpha_{\ell}$ of $Q(s)$ so that we can take the inverse Laplace transform. Furthermore, we wish to show that, in fact, $-\alpha_{\ell} \in \bbR$ for all roots $-\alpha_{\ell}$ so that $\beta_{\ell} \in \bbR$ as well. Since $E_j$ and $v_j$ are positive for all $j \in [L'+1]$, it is clear that if a root $-\alpha_{\ell}$ is real, then it cannot be positive since $Q(s) = \sum_{\ell=1}^{L'+1} d_{\ell}\prod_{j\neq \ell}(E_j + \nu_js)$ has all positive coefficients. We now show that all roots of $Q(s)$ are real. Suppose $a+bi$ is a root of $Q(s)$. Then
\begin{align*}
    Q(a+bi) & = \sum_{\ell=1}^{L'+1}d_{\ell}\prod_{j \neq \ell} (E_j + \nu_j(a+bi)) \\
    & = \left[\prod_{j = 1}^{L' + 1}(E_j + \nu_j(a+bi))\right]\cdot \sum_{\ell = 1}^{L'+1}\frac{d_{\ell}}{E_{\ell} + \nu_{\ell}(a+bi)} \\
    & = \left[\prod_{j = 1}^{L' + 1}(E_j + \nu_j(a+bi))\right]\cdot \sum_{\ell = 1}^{L'+1}\left(\frac{d_{\ell}(E_{\ell} + \nu_{\ell}a)}{(E_{\ell}+\nu_{\ell} a)^2 + (\nu_{\ell} b)^2} - \frac{d_{\ell}(\nu_{\ell}b)}{(E_{\ell} + \nu_{\ell} a)^2 + (\nu_{\ell} b)^2}i\right). 
\end{align*}
The term $ \prod_{j = 1}^{L' + 1}(E_j + \nu_j(a+bi))$ is a nonzero constant for $b \neq 0$ since $E_j, \nu_j \in \bbR_+$. Therefore, for $Q(a+bi)=0$, both the real and imaginary components of the sum on the right must total $0$. However, since $d_{\ell}$, $\nu_{\ell}$, and the denominator term $(E_{\ell} + \nu_{\ell} a)^2 + (\nu_{\ell} b)^2$ are all positive as well, $b$ must equal $0$ to make $\text{Im}[Q(a+bi)] = 0$. Therefore, all roots of $Q(s)$ are in $ \bbR_-$. 
Returning to the decomposition, we now have 
\begin{equation}
    \widehat{a}_0(s) = E' + \nu's - \sum_{\ell=1}^{L'}\frac{\beta_{\ell}}{s+\alpha_{\ell}}
\end{equation}
where $\beta_{\ell} \in \bbR$ and $\alpha_{\ell} \in \bbR_+$ for all $\ell \in [L']$.
\end{proof}
Now we may prove the theorem.
\exactpieceparamthm*
\begin{proof}
By Lemma \ref{lem:exactpieceparam}, we have that 
\begin{align*}
    \widehat{\sigma_0} & = \widehat{a_0}(s)\p_x \widehat{u_0} \\
    & = \left(E' + \nu' s -  \sum_{\ell =1}^{L'} \frac{\beta_{\ell}}{s+ \alpha_{\ell}}\right) \p_x \widehat{u_0}
\end{align*}
where $\beta_{\ell} \in \bbR$ and $\alpha_{\ell} \in \bbR_+$ for all $\ell \in [L']$. Taking an inverse Laplace transform, we get
\begin{equation}
    \sigma_0(t) = E'\p_x u_0 (t) + \nu' \p_t\p_x u_0(t) - \sum_{\ell = 1}^{L'}\beta_{\ell}\int_0^t \p_x u_0(\tau) \exp[-\alpha_{\ell}(t-\tau)]\;d\tau.
\end{equation}
The above may be reexpressed as equations \eqref{eqn:PC_exact} with a choice of parameters $\theta = (E',\nu',L',\alpha,\beta)$ and auxiliary variable $\xi$.
\end{proof}
\subsection{RNN Approximation Theorem \ref{thm:RNN_approx} Proof}\label{sapdx:RNN}

In this subsection we use $|\cdot|, \|\cdot\|$ to denote modulus and Euclidean norm, respectively,
and $\langle \cdot, \cdot \rangle$ to denote Euclidean inner-product. This overlap with the notation
from Subsection \ref{ssec:N} should not lead to any confusion as it is confined to this subsection.

To prove Theorem \ref{thm:RNN_approx} we first study the simple case where $\mcF^{PC}, \mcG^{PC}$ 
are uniformly approximated across all inputs; subsequently we will use this to establish Theorem \ref{thm:RNN_approx} as stated.

\begin{assumptions}\label{ass:RNN}
For any $\delta > 0$, there exist $\mcF^{RNN}$ and $\mcG^{RNN}$ such that 
\begin{align*}
    &\sup_{z \in \bbR^{2+L'}}\left|\mcF^{PC}(z) - \mcF^{RNN}(z)\right|\leq \delta\\
    &\sup_{z \in \bbR^{1+L'}}\left\|\mcG^{PC}(z) - \mcG^{RNN}(z)\right\|\leq \delta
\end{align*}
\end{assumptions}

\begin{restatable}{proposition}{RNNeasyprop}
\label{prop:RNN_easy}
Under Assumptions \ref{ass:RNN}, if $\{\alpha_{\ell}\}$ in equations \eqref{eqns:alphaellbetaell} are bounded such that $0< \rho < \alpha_{\ell}$ for some $\rho$ for all $\ell\in [L']$, then  for any $\eta > 0$, there exists a map $\Psi_0^{RNN}$ defined as in equations \eqref{eqn:psiRNN} such that for $\Psi_0^{PC}$ defined in equations \eqref{eqn:psiPC}, for any $t \in \bbR^+$ and functions $b,c : \bbR^+\to\bbR$, $$\left|\Psi_0^{PC}(b(t),c(t),\{b(\tau)\}_{\tau \in \overline{\mathcal{T}}},t;\theta^*)-\Psi_0^{RNN}(b(t),c(t),\{ b(\tau)\}_{\tau \in \overline{\mathcal{T}}},t)\right| \leq \eta.$$
\end{restatable}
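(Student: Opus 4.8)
The plan is to run the two internal-variable systems \eqref{eqn:psiPC} and \eqref{eqn:psiRNN} forward from their common initial condition $\xi(0)=0$, driven by the same input $b$, and to control the output discrepancy entirely through the discrepancy of the internal-variable trajectories. Write $\xi^{PC}$ for the solution of $\dot\xi=\mathcal{G}^{PC}(\xi,b)$ and $\xi^{RNN}$ for the solution of $\dot\xi=\mathcal{G}^{RNN}(\xi,b)$, both with $\xi(0)=0$. Fix $\delta>0$ and choose $\mathcal{F}^{RNN},\mathcal{G}^{RNN}$ as guaranteed by Assumptions \ref{ass:RNN}; at the very end I will pick $\delta$ in terms of $\eta$.

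The key estimate is a uniform-in-time bound on $e(t):=\xi^{RNN}(t)-\xi^{PC}(t)$. Subtracting the two evolution equations and inserting $\pm\,\mathcal{G}^{PC}(\xi^{RNN},b)$ gives
\begin{equation*}
\dot e = \bigl(\mathcal{G}^{RNN}(\xi^{RNN},b)-\mathcal{G}^{PC}(\xi^{RNN},b)\bigr) + \bigl(\mathcal{G}^{PC}(\xi^{RNN},b)-\mathcal{G}^{PC}(\xi^{PC},b)\bigr).
\end{equation*}
The first parenthesis has norm at most $\delta$ by Assumptions \ref{ass:RNN} (evaluated pointwise at $(\xi^{RNN}(t),b(t))$, regardless of its magnitude), while the second equals $-Ae$ since $\mathcal{G}^{PC}(r,b)=-Ar+\beta b$ is affine in $r$ with linear part $-A$. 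Hence $\dot e=-Ae+r(t)$ with $\|r(t)\|\le\delta$ and $e(0)=0$. Because $A=\mathrm{diag}(\alpha_\ell)$ is symmetric with every eigenvalue exceeding $\rho$, we have $\la e,Ae\ra\ge\rho\|e\|^2$; differentiating $\tfrac12\|e\|^2$ then yields $\tfrac12\tfrac{d}{dt}\|e\|^2\le-\rho\|e\|^2+\delta\|e\|$, and Gronwall/comparison gives $\|e(t)\|\le\tfrac{\delta}{\rho}(1-e^{-\rho t})\le\tfrac{\delta}{\rho}$ for all $t\ge0$. (Working throughout with $\tfrac12\frac{d}{dt}\|e\|^2$ avoids any difficulty of the comparison argument at times where $e$ vanishes, and global existence of $\xi^{RNN}$ follows by continuation, since this a priori bound and the global existence of the linear $\xi^{PC}$ preclude finite-time blowup.)

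With the trajectory bound in hand, I would estimate the output difference by telescoping through $\mathcal{F}^{PC}(b,c,\xi^{RNN})$:
\begin{equation*}
\Psi_0^{PC}-\Psi_0^{RNN} = \bigl(\mathcal{F}^{PC}(b,c,\xi^{PC})-\mathcal{F}^{PC}(b,c,\xi^{RNN})\bigr) + \bigl(\mathcal{F}^{PC}(b,c,\xi^{RNN})-\mathcal{F}^{RNN}(b,c,\xi^{RNN})\bigr).
\end{equation*}
The second parenthesis is at most $\delta$ by Assumptions \ref{ass:RNN}. For the first, the crucial point is that $\mathcal{F}^{PC}(b,c,r)=E'b+\nu'c-\la\mathds{1},r\ra$ is linear in $r$, so the $E'b$ and $\nu'c$ terms cancel and this parenthesis equals $\la\mathds{1},e\ra$, bounded by $\|\mathds{1}\|\,\|e\|\le\sqrt{L'}\,\delta/\rho$ via Cauchy--Schwarz. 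Routing through $\mathcal{F}^{PC}$ rather than $\mathcal{F}^{RNN}$ is what sidesteps any need to control a Lipschitz constant of the network $\mathcal{F}^{RNN}$, and the cancellation removes any dependence on the (unbounded) sizes of $b,c$. Combining, $|\Psi_0^{PC}-\Psi_0^{RNN}|\le\delta\bigl(1+\sqrt{L'}/\rho\bigr)$ uniformly in $t$, $b$, and $c$, so the choice $\delta=\eta\bigl(1+\sqrt{L'}/\rho\bigr)^{-1}$ delivers the claim.

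The main obstacle is precisely the uniform-in-time trajectory bound of the second paragraph: since the time horizon is unbounded, an instantaneous discrepancy of size $\delta$ in $\mathcal{G}$ could a priori accumulate without limit. The hypothesis $\alpha_\ell>\rho>0$ is exactly what rules this out — it makes the internal-variable dynamics exponentially contracting, so the forcing error feeds in only through the stable balance $\mathcal{O}(\delta/\rho)$ rather than growing in $t$. I expect the two telescoping inequalities to be routine once this dissipativity has been exploited.
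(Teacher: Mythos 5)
Your proposal is correct and follows essentially the same argument as the paper's proof: the same telescoping of the output error through $\mathcal{F}^{PC}(b,c,\xi^{RNN})$, the same reduction to the internal-variable error ODE $\dot e = -Ae + q$ with $\|q\|\le\delta$, and the same dissipativity/Gronwall estimate yielding $\|e(t)\|\le\delta/\rho$ uniformly in time. Your added remarks on global existence of $\xi^{RNN}$ and the explicit choice of $\delta$ in terms of $\eta$ are minor refinements of what the paper leaves implicit.
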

\begin{proof}
Note that the main difficulty in this proof results from the fact that $\mcF^{RNN}$ and $\mcF^{PC}$ act on different hidden variables $\xi$, which we will denote $\xi^{RNN}$ and $\xi^{PC}$, and whose first order time derivatives are given by $\mcG^{RNN}$ and $\mcG^{PC}$ respectively. We write 
\begin{align*}
   &\left|\Psi_0^{PC}(b(t),c(t),\{b(\tau)\}_{\tau \in \overline{\mathcal{T}}},t;\theta^*)-\Psi_0^{RNN}(b(t),c(t),\{ b(\tau)\}_{\tau \in \overline{\mathcal{T}}},t)\right| \\ & = |\mcF^{PC}(b(t),c(t),\xi^{PC}(t))-\mcF^{RNN}(b(t), c(t), \xi^{RNN}(t))|\\
    & \leq |\mcF^{PC}(b(t), c(t), \xi^{RNN}(t)) - \mcF^{RNN}(b(t), c(t), \xi^{RNN}(t))|\\&+|\mcF^{PC}(b(t), c(t), \xi^{PC}(t)) - \mcF^{PC}(b(t), c(t), \xi^{RNN}(t))| \\
    & \leq \delta + |\mcF^{PC}(b(t), c(t), \xi^{PC}(t)) - \mcF^{PC}(b(t), c(t), \xi^{RNN}(t))|
\end{align*}
by Assumptions \ref{ass:RNN} since $\mcF^{PC}$ and $\mcF^{RNN}$ share arguments in the first term. To bound the second term,
\begin{align*}
     |\mcF^{PC}(b(t), c(t), \xi^{PC}(t)) - \mcF^{PC}(b(t), c(t), \xi^{RNN}(t))| & = |\la \mathds{1},\xi^{PC}(t)-\xi^{RNN}(t) \ra | \leq \sqrt{L'}\|\xi^{PC}(t)-\xi^{RNN}(t)\|
\end{align*}
using the known form of $\mcF^{PC}$ where $\|\cdot \|$ is the Euclidean norm in $\bbR^{L'}$.

Let $e_{\xi}(t) = \xi^{PC}(t) - \xi^{RNN}(t)$. Note that $\xi^{PC}(0) = \xi^{RNN}(0) = 0$, so $e_{\xi}(0) = 0$. We wish to bound $\|e_{\xi}(t)\|$. To do so, we first bound $\|\dot{e}_{\xi}(t)\|$, where $\dot{e}_{\xi}(t) = \frac{d}{dt} e_{\xi}(t)$:
\begin{align*}
    \dot{e}_{\xi}(t) & = \dot{\xi}^{PC}(t) - \dot{\xi}^{RNN}(t)\\
    & = \mcG^{PC}(\xi^{PC}(t),b(t))-\mcG^{PC}(\xi^{RNN}(t),b(t)) - \mcG^{RNN}(\xi^{RNN}(t),b(t)) + \mcG^{PC}(\xi^{RNN}(t),b(t))\\
    & = \mcG^{PC}(\xi^{PC}(t),b(t)) - \mcG^{PC}(\xi^{RNN}(t),b(t)) + q(t)
\end{align*}
where we have defined $q(t) = \mcG^{PC}(\xi^{RNN}(t),b(t))-\mcG^{RNN}(\xi^{RNN}(t),b(t))$ and  $\|q(t)\| \leq \delta$ by Assumptions \ref{ass:RNN}. Now note that $\dot{e}_{\xi}(t) = -Ae_{\xi}(t) + q(t)$ by the form of $\mcG^{PC}$, so we can bound
\begin{align*}
    \frac{1}{2}\frac{d}{dt} \|e_{\xi}(t)\|^2 & = \la e_{\xi}(t), \dot{e}_{\xi}(t) \ra = -\la e_{\xi}(t), Ae_{\xi}(t) \ra + \la q(t), e_{\xi}(t)\ra \\
    & \leq -\alpha_{\min}\|e_{\xi}(t)\|^2 + \left\la \frac{1}{\alpha_{\min}^{1/2}}q(t), \alpha_{\min}^{1/2}e_{\xi}(t)\right\ra\\
    &\leq - \alpha_{\min}\|e_{\xi}(t)\|^2 + \frac{1}{2\alpha_{\min}}\|q(t)\|^2 + \frac{\alpha_{\min}}{2}\|e_{\xi}(t)\|^2\\
    \frac{d}{dt}\|e_{\xi}(t)\|^2 & \leq - \alpha_{\min}\|e_{\xi}(t)\|^2 + \frac{\delta^2}{\alpha_{\min}}
\end{align*}
by Young's inequality. Then by Gronwall's inequality
\begin{align}\label{eqn:thm4_result}
    \|e_{\xi}(t)\|^2 & \leq \frac{\delta^2}{\alpha_{\min}^2}\left(1-e^{-\alpha_{\min}t}\right)
\end{align}
so $\|e_{\xi}(t)\|< \frac{\delta}{\rho}$ for all time. Returning to the main proof narrative, 
\begin{align*}
    &\left|\Psi_0^{PC}(b(t),c(t),\{b(\tau)\}_{\tau \in \overline{\mathcal{T}}},t;\theta^*)-\Psi_0^{RNN}(b(t),c(t),\{ b(\tau)\}_{\tau \in \overline{\mathcal{T}}},t)\right|\\ & \leq \delta + \sqrt{L'}\|\xi^{PC}(t)-\xi^{RNN}(t)\| \leq \delta + \frac{\sqrt{L'}\delta}{\rho}. 
\end{align*}
Since by Assumptions \ref{ass:RNN}, $\delta$ is arbitrarily small, the theorem result is shown.
\end{proof}
Although we did not need to restrict the inputs $t,b$, and $c$ in Proposition \ref{prop:RNN_easy} to compact sets in order to prove it, we will argue that the statement holds under weaker assumptions if the inputs are also bounded. 
The following weaker assumptions follow from the Universal Approximation Theorem for RNNs\cite{cybenko1989approximation}. 
\begin{assumptions}\label{ass:UAT}
If $D_1 \in \bbR^{2+L'}$ and $D_2 \in \bbR^{1+L'}$ are compact sets, then for any $\delta > 0$, there exist $\mcF^{RNN}$ and $\mcG^{RNN}$ such that 
\begin{align*}
    &\sup_{z \in D_1}\left|\mcF^{PC}(z) - \mcF^{RNN}(z)\right|\leq \delta\\
    &\sup_{z \in D_2}\left\|\mcG^{PC}(z) - \mcG^{RNN}(z)\right\|\leq \delta.
\end{align*}
\end{assumptions}

\RNNapprox*

\begin{proof}
Notice first that Assumptions \ref{ass:UAT} are a weaker version of Assumptions \ref{ass:RNN}. We will prove the theorem by showing that, for inputs bounded via $t \in \mcT$ and $b,c \in \mathsf{Z}_R$, we never need the stronger assumption in the proof of Proposition \ref{prop:RNN_easy} because the function arguments of $\mcF^{PC}, \mcF^{RNN},\mcG^{PC},$ and $\mcG^{RNN}$ never leave a compact set. First we show that $\sup_{t \in \mcT} \|\xi^{PC}(t)\| \leq R_3$ for some $R_3 > 0$.  For any $\ell\in \{1,\dots, L'\}$, we have 
\begin{align*}
    \dot{\xi}^{PC}_{\ell}(t) &= \beta_{\ell}b(t) - \alpha_{\ell} \xi_{\ell}^{PC}(t)\\
    |\xi^{PC}_{\ell}(t)| &\leq e^{-\alpha_{\ell}t}\beta_{\ell}\left(\sup_{t \in \mcT}|b(t)|\right)\int_0^t e^{\alpha_{\ell}t'}dt' \\
    & \leq e^{-\alpha_{\ell}t}\beta_{\ell}\left(\sup_{t \in \mcT}|b(t)|\right) \frac{1}{\alpha_{\ell}}e^{\alpha_{\ell}t}\\
    \sup_{t\in \mcT}|\xi^{PC}_{\ell}(t)| &\leq \frac{B}{\rho}R,
\end{align*}
so that $\sup_{t \in \mcT} \|\xi^{PC}(t)\| \leq \frac{\sqrt{L'}BR}{\rho}.$ Let $R_3 = \frac{\sqrt{L'}BR}{\rho}$. Define $R_4 = \max\{R_3 + \frac{\delta}{\rho},R\}$ for $\delta$ in Assumptions \ref{ass:UAT}. We will show that $\sup_{t \in \mcT} \|\xi^{RNN}(t)\|\leq R_4$ for $\xi^{RNN}$ defined by $\xi^{RNN}$ in equations \eqref{eqn:psiRNN}. Then the proof of Proposition \ref{prop:RNN_easy} will apply for bounded $t,b$ and $c$ with the weaker assumptions since all inputs to $\mcF^{PC},\mcF^{RNN},\mcG^{PC}$, and $\mcG^{RNN}$: $b(t)$, $c(t)$, $\xi^{PC}(t)$, and $\xi^{RNN}(t)$, will remain in a compact set for $t \in \mcT$. 

Suppose for the sake of contradiction that there exists a time $t'\in \mcT$ at which $\|\xi^{RNN}(t')\| > R_4$. Then there exists a time $T'< t' <T$ and $\eps > 0$ such that for $t \in [0,T']$, $\|\xi^{RNN}(t)\|\leq R_4$, for $t \in (T',T'+\eps)$, $\|\xi^{RNN}(t)\|>R_4$, and $\|\xi^{RNN}(T')\| = R_4$ by continuity. In other words, $T'$ is the time at which $\xi^{RNN}$ first crosses the $R_4$ radius. Then 
$$\|e_{\xi}(T')\|: = \|\xi^{RNN}(T') - \xi^{PC}(T')\| \geq R_4 - R_3 \geq \frac{\delta}{\rho}$$ 
by triangle inequality.  Since $\|\xi^{RNN}(t)\| \leq R_4$ for $t \in [0,T']$, the bound on $\|e_{\xi}(t)\|$ in equation \eqref{eqn:thm4_result} in the proof of Proposition \ref{prop:RNN_easy} applies on the interval $t \in [0,T']$ under the weaker assumptions \ref{ass:UAT}, and $\|e_{\xi}(T')\| < \frac{\delta}{\rho}$. This is a contradiction. Therefore, $\sup_{t \in \mcT} \|\xi^{RNN}(t)\|\leq R_4$, and the proof of Proposition \ref{prop:RNN_easy} holds with the weaker Assumptions \ref{ass:UAT} for bounded inputs $t \in \mcT$ and $b,c \in \mathsf{Z}_R$.  
\end{proof}
The bounds on $\alpha$ and $\beta$ required in Theorem \ref{thm:RNN_approx} are justified because for known material properties $E$ and $\nu$, $\alpha$ and $\beta$ are determined and finite-dimensional, so they have maximum and minimum values. 

\section{Special Case Solutions}
\subsection{Laplace Transform Limit}\label{assec:Enu_s_limit}
Here we derive the form of $\Psi_0^{\dagger}$ in equation \eqref{eqn:homogc_general} via a power series expansion of the Laplace transform at $s = \infty$. Starting from the definition in equation \eqref{eqn:Lap_inv}: 
\begin{align*}
    \Psi_0^\dagger = \mathcal{L}^{-1}\left(\left(\int_0^1 \frac{dy}{s\nu(y) + E(y)}\right)^{-1} \p_x \widehat{u}_0\right).
\end{align*}
For $s\gg1$, we have that
\begin{equation*}
    \left(\int_0^1 \frac{dy}{s\nu(y) + E(y)}\right)^{-1} \approx \left(\int_0^1 \frac{dy}{s\nu(y)}\right)^{-1} = s\left(\frac{dy}{\nu(y)}\right)^{-1}.
\end{equation*}

Setting $\nu' = \left(\frac{dy}{\nu(y)}\right)^{-1}$, we now subtract out the linear dependence on $s$ and let $z = \frac{1}{s}$. We define 
$$F(z)=\hat{a}_0(s) -\nu' s \Big|_{s=z^{-1}}$$
to obtain
\begin{align*}
    F(z) & = \hat{a}_0\bigl(z^{-1}\bigr) - \nu' z^{-1}\\
    & = \left(\int_0^1\frac{z \; dy}{\nu(y) + z E(y)}\right)^{-1} - \left(\int_0^1 \frac{z \; dy}{\nu(y)}\right)^{-1}\\
    & = \frac{\int_0^1 \left(\frac{z}{\nu(y)} - \frac{z}{\nu(y) + zE(y)}\right)dy}{\left(\int_0^1\frac{z \; dy}{\nu(y) + z E(y)}\right)\left(\int_0^1 \frac{z \; dy}{\nu(y)}\right)}\\
    & = \frac{z^2 \int_0^1 \frac{E(y)}{\nu(y)(\nu(y)+zE(y))}\; dy}{z^2\left(\int_0^1 \frac{dy}{\nu(y)+zE(y)}\right)\left(\int_0^1 \frac{dy}{\nu(y)}\right)}\\
    & = \frac{\int_0^1 \frac{E(y)}{\nu(y)(\nu(y)+zE(y))}\; dy}{\left(\int_0^1 \frac{dy}{\nu(y)+zE(y)}\right)\left(\int_0^1 \frac{dy}{\nu(y)}\right)}.
\end{align*}
Since $\inf_{y \in (0,1)} \nu(y) > 0$, 
\begin{equation*}
    \lim_{z \to 0} F(z) = \frac{\int_0^1 \frac{E(y)}{\nu^2(y)}\;dy}{\left(\int_0^1 \frac{dy}{\nu(y)}\right)^2}=: E'
\end{equation*}
From this same computation, we see that for $\widehat{a}_0(s) = s\nu' + E' + \kappa(s)$, the contribution $\kappa(s)$ consists of lower order terms in $s$ and is such that $\lim_{s \to \infty} \kappa(s) = 0$. Using the fact that the inverse
Laplace transform of a product (if it exists) is a convolution, we justify
the form of the integral term in equation \eqref{eqn:homogc_general}.

\subsection{Forced Boundary Problem}\label{sapdx:FBP} 

\lemmaFBP*
\begin{proof}
Taking the Laplace transform of \eqref{eqns:OG} yields
\begin{equation*}
    \widehat{\sigma}(s) = (E(y) + \nu(y)s)\p_y\widehat{u}(y,s).
\end{equation*}
Spatially averaging and noting that $b(t) = \la \p_y u(y,t) \ra $, we have
\begin{equation}\label{eqn:bFBP}
    \widehat{b}(s) = \int_0^1 \frac{dy}{(E + s \nu)(y)} \widehat{\sigma}(s). 
\end{equation}
Then $\widehat{\sigma}(s) = \left( \int_0^1 \frac{dy}{(E + s \nu)(y)} \right)^{-1}\widehat{b}(s)$, which is equivalent to $\widehat{\sigma}(s) = \widehat{a_0}(s) \widehat{b}(s)$ using equation \eqref{eqn:a_0}. The definition of $\Psi_0^\dagger$ in equation \eqref{eqn:Lap_inv} completes the proof. 

\end{proof}
Lemma \ref{lem:data_jus} justifies the use of data arising from the system \eqref{eqns:OG} to train the map $\Psi_0$. 

\section{Macroscale Numeric Comparisons}
\subsection{RNN Training and Testing: Piecewise-Constant Case}\label{assec:RNN_train_test}
We trained three RNNs using the same dataset for the setting of a 2-piecewise constant material with material parameters $E_1 = 1$, $E_2 = 3$, $\nu_1 = 0.1$, and $\nu_2 = 0.2$. The data was generated using a forward Euler method with time discretization $dt = 0.001$ up to time $T = 4$ on the known analytic solution for the 2-piecewise-constant cell problem. Denote the data by $\{(\p_x u_0)_n,(\sigma_0)_n\}_{n=1}^N$ as discussed in Section \ref{ssec:RNN_Optimization}. We repeat the two loss functions here: 
\paragraph{Accessible Loss Function:}
\begin{equation*}
   L_1(\{\sigma_0\}_{n=1}^N, \{\widehat{\sigma}_0\}_{n=1}^N)  = \frac{1}{N}\sum_{n=1}^N \frac{\|(\sigma_0)_n -(\widehat{\sigma}_0)_n\|}{\|(\sigma_0)_n\|}
\end{equation*}
\paragraph{Inaccessible Loss Function:}
\begin{equation*}
   L_2(\{\sigma_0\}_{n=1}^N, \{\widehat{\sigma}_0\}_{n=1}^N, \{\xi\}_{n=1}^N, \{\widehat{\xi}_0\}_{n=1}^N)  = \frac{1}{N}\sum_{n=1}^N\left( \frac{\|(\sigma_0)_n -(\widehat{\sigma}_0)_n\|}{\|(\sigma_0)_n\|} + \frac{\|(\xi)_n -(\widehat{\xi})_n\|}{\|(\xi)_n\|_{L^2(\mathcal{D},\bbR)}}\right)
\end{equation*}

For each of the following RNNs, the architecture for $\mathcal{F}_{RNN}$ and $\mathcal{G}_{RNN}$ consists of three internal layers of SeLU units of $100$ nodes separated by linear layers, all followed by a final linear layer. The SELU function is applied element-wise as
$$ SELU(x) = s(\max(0,x) + \min(0,\alpha(\exp(x)-1))) $$
where $\alpha = 1.67326$ and $s = 1.05070$\footnote{\url{https://pytorch.org/docs/stable/generated/torch.nn.SELU.html}}.
We trained three different RNNs on the same dataset in the following manner: 
\begin{itemize}
    \item \textbf{RNN ``A"}: Using only the inaccessible loss function $L_2$, we trained on $N=400$ data points with subsampled time discretization of $dt = 0.004$ up to $T = 4$ for $1500$ epochs with a batch size of $50$. 
    \item \textbf{RNN ``B"}: First we used the inaccessible loss function $L_2$ to train on $N = 200$ data points with subsampled time discretization of $dt = 0.004$ up to $T = 2$ for $1500$ epochs with a batch size of $40$. Then we initialized a new RNN at the parameters of this RNN and trained with the accessible loss function $L_1$ for $1000$ epochs on $200$ data with batch size of $40$. 
    \item \textbf{RNN ``C"}: Using only the accessible loss function $L_1$, we trained on $N= 500$ data points with subsampled time discretization of $dt = 0.004$ up to $T = 4$ for $3000$ epochs with a batch size of $50$. 
\end{itemize}

The train and test errors are shown for the three RNNs in Figure \ref{fig:train_test}. 

\begin{figure}[ht!]
        \begin{subfigure}[b]{0.3\textwidth}
            \includegraphics[width = \textwidth]{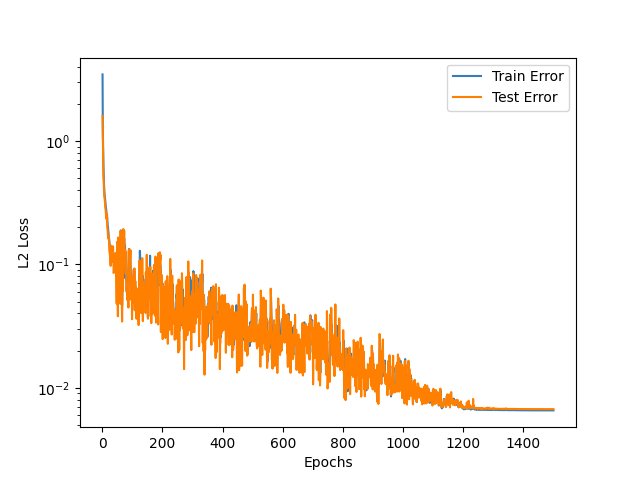}
            \caption{RNN ``A" trained using inaccessible loss function}
        \end{subfigure}
        \begin{subfigure}[b]{0.3\textwidth}
            \includegraphics[width = \textwidth]{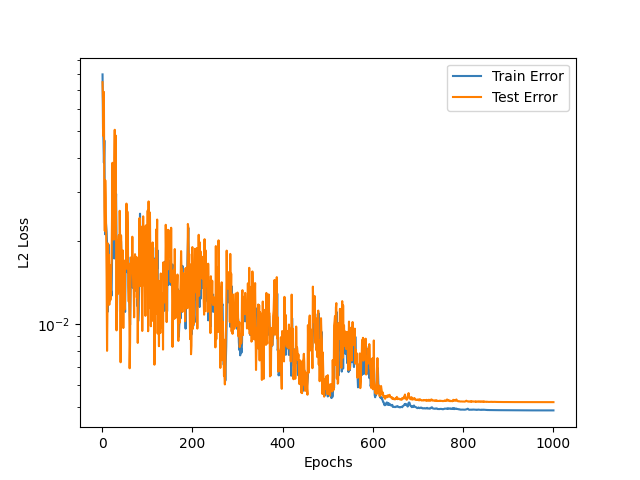}
            \caption{RNN ``B" initialized at inaccessible loss solution}
        \end{subfigure}
        \begin{subfigure}[b]{0.3\textwidth}
            \includegraphics[width = \textwidth]{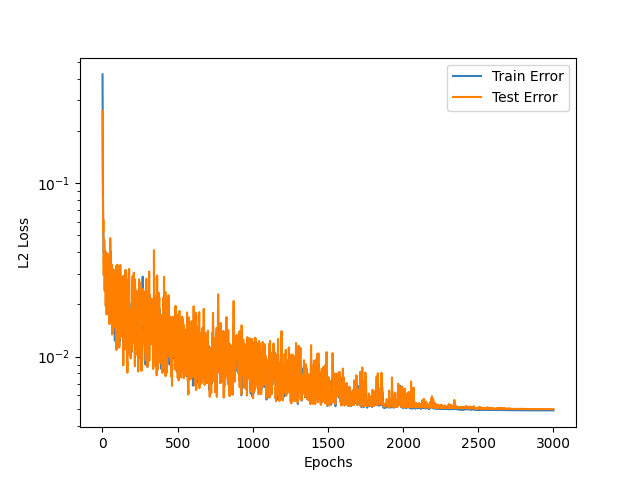}
            \caption{RNN ``C" trained using only standard loss function}
        \end{subfigure}
        \centering
        \caption{Train and test error for the three RNNs.}
        \label{fig:train_test}
\end{figure}

\subsection{RNN Training and Testing: Continuous Case}\label{assec:RNN_cont}
We trained several RNNs on data $\{(\p_xu_0)_n,(\sigma_0)_n\}_{n=1}^N$ for continuous material parameters given by $E(y) = 2 + \tanh\left(\frac{y-0.5}{0.2}\right)$ and $\nu(y) = 0.5 + 0.1\tanh\left(\frac{y-0.5}{0.2}\right)$. Each of the four RNNs had $1$, $2$, $5$, and $10$ hidden variables ($L_0$, or the dimension of $\xi$) respectively. The data was generated by solving the cell problem using
a finite difference method with $200$ spatial nodes and $dt = h^2$ where $h$ is the spatial discretization. The RNN was trained for $3000$ epochs on $500$ data. The macroscale simulations were performed with a spacial discretization of $h_{cell} = 0.05$ and a time discretization of $dt = 0.4h_{cell}^2$. They were compared to an FEM solution computed with a spacial discretization of $h = 0.004$ with a material period of $0.04$ and time discretization of $dt = 0.1h^2$. 

\section{One-Dimensional Standard Linear Solid}\label{sec:SLS}
In this section we address the model of the one-dimensional Maxwell version of the SLS, whose constitutive law depends only on the strain and strain history. The analysis  for the SLS model demonstrates that the ideas presented for the KV model extend beyond that particular setting. In Section \ref{subsec:SLS-eqns}, we present the governing equations, and in Section \ref{subsec:SLS-homog} we homogenize the system. 
\subsection{Governing Equations}\label{subsec:SLS-eqns}
In the setting without inertia,
the displacement $u_{\eps}$, strain $e_{\eps}$, and inelastic strain
$e_{\eps}^p$ are related by 
\begin{subequations}\label{eqns:SLS}
\begin{align}
-\p_x \sigma_{\eps} & = f, \\
e_{\eps} & = \p_x u,\\
    \sigma_{\eps}&= E_{1,\eps} e_{\eps}+E_{2,\eps}(e_{\eps}-e^p_{\eps}), \label{eqn:SLS1}\\
    \p_t e^p_{\eps} &=\frac{E_{2,\eps}}{\nu_{\eps}}(e_{\eps}-e^p_{\eps}); \label{eqn:SLS2}
\end{align}
\end{subequations}
where $f: \mathcal{D} \times \mathcal{T} \mapsto \bbR$ is a known forcing, and we impose initial condition $u(x,0) = 0$ and boundary conditions $u(x,t) = 0$ for $x \in \p\mathcal{D}$. We seek a solution $u_{\eps}:\mathcal{D} \times \mathcal{T} \mapsto \bbR$. Once more we have small scale dependence in the material properties through $\eps$: we have $E_{i,\eps}(x) = E_i\left(\frac{x}{\eps}\right)$ for $i = 1,2$ and $\nu_{\eps}(x) = \nu \left(\frac{x}{\eps}\right)$ for $0 < \eps \ll 1$. 
\subsection{Homogenization}\label{subsec:SLS-homog}
First, we take the Laplace transform of equation \eqref{eqns:SLS} and combine the transformed expressions of equations \eqref{eqn:SLS1} and \eqref{eqn:SLS2} 
to arrive at 
\begin{equation}
    \wh{\sigma}_{\eps} = E_{1,\eps}\wh{e}_{\eps} + E_{2,\eps}s\left(s + \frac{E_{2,\eps}}{\nu_{\eps}}\right)^{-1}\wh{e}_{\eps}
\end{equation}
Letting $\wh{a}(s) = E_{1,\eps} + E_{2,\eps}s\left(s+\frac{E_{2,\eps}}{\nu_{\eps}}\right)^{-1}$, the homogenization theory of Section \ref{subsec:homog} applies, and we can use the harmonic averaging expression in equation \eqref{eqn:a_0} to write
\begin{equation}\label{eqn:SLSa0}
    \wh{a}_0(s) = \left\la (a(s))^{-1}\right\ra^{-1} = \left(\int_0^1 \frac{s+\frac{E_{2}}{\nu}}{(E_{1}+E_{2})s + \frac{E_{1}E_{2}}{\nu}}dy\right)^{-1}
\end{equation} 
where the homogenized solution $u_0$ solves system \ref{eqns:general-dynamics-homog} with $\Psi_0^\dagger$ is defined as 
\begin{equation}\label{eqn:Lap_inv2}
\Psi_0^\dagger = \mathcal{L}^{-1}\left[\wh{a}_0(s)\p_x\wh{u}_0\right],
\end{equation}
analogous to the KV case. However, in the case of piecewise-constant $E_1$, $E_2$ and $\nu$ the inverse Laplace transform yields a different form in the SLS case: 
\begin{subequations}\label{eqn:SLS_PC_exact}
\begin{align}
    \Psi_0^{PC}(\p_x u_0, t; \theta) &= E' \p_x u_0(t) - \sum_{\ell=1}^L \xi_{\ell}(t)\\
    \p_t \xi_{\ell}(t) & = \beta_{\ell}\p_x u_0(t) - \alpha_{\ell}\xi_{\ell}(t), \quad \ell \in \{1, \dots, L\}
\end{align}
for a material with $L$ pieces. Note that this model does not have dependence on the strain rate, but it has one more internal variable than the piecewise-constant case for the KV model. 
The value of $E'$ follows from taking the limit $s \to \infty$ and is given by
\begin{equation*}    E'=\left(\int_0^1 \frac{1}{(E_{1}+E_{2})}dy\right)^{-1}.
\end{equation*}

Full derivation may be found in Appendix \ref{assec:SLS}.
\end{subequations}

\subsection{SLS Derivation}\label{assec:SLS}
Here we show that the SLS model has one more internal variable in the piecewise-constant case than the KV model does. This is the analog of Theorem \ref{thm:PC_exact} for the SLS model. 
Starting from equation \eqref{eqn:SLSa0} for $\hat{a}_0(s)$:
\begin{align*}
    \widehat{a}_0(s) & = \la \widehat{a}(s)^{-1}\ra^{-1} \\
    & = \left(\int_0^1\frac{s + \frac{E_2(y)}{\nu}}{(E_1(y)  + E_2(y))s + \frac{E_1(y)E_2(y)}{\nu(y)}}\;dy \right)^{-1}\\
    & \left(\sum_{i=1}^L \frac{(s + \frac{E_{2,i}}{\nu_i})d_i}{(E_{1,i} + E_{2,i})s + \frac{E_{1,i}E_{2,i}}{\nu_i}}\right)^{-1}
\end{align*}
for $L$-piecewise-constant $E_1$, $E_2$, and $\nu$ with pieces of length $d_i$. Let $c_i = \frac{E_{2,i}}{\nu_i}$, $k_i = E_{1,i} + E_{2,i}$ and $p_i = \frac{E_{1,i}E_{2,i}}{\nu_i}$. Continuing, \begin{align*}
    & = \left(\sum_{i=1}^L \frac{(s+c_i)d_i}{k_i s + p_i}\right)^{-1}\\
    & = \frac{\prod_{i=1}^L (k_i s + p_i)}{\sum_{i=1}^L d_i (s+c_i)\prod_{j\neq i}(k_j s + p_j)}:= \frac{P(s)}{Q(s)}
\end{align*}
Note that both $P(s)$ and $Q(s)$ have degree $L$. There is a unique constant $E'$ such that 
\begin{equation*}
    \frac{P(s)}{Q(s)} = E' + \frac{C(s)}{Q(s)}
\end{equation*}
where $C(s)$ has degree $L$. Then $\frac{C(s)}{Q(s)}$ decomposes uniquely as $\sum_{\ell = 1}^L \frac{\beta_{\ell}}{s + \alpha_{\ell}}$. Note that this is one more pole than the decomposition for the KV model in Theorem \ref{thm:PC_exact} has. We will now show that roots of $Q$ are real and negative, which will lead to the expression in equation \eqref{eqn:SLS_PC_exact}. First notice that if the roots of $Q(s)$ are real, then they must be negative since $k_i$, $c_i$, $d_i$, and $p_i$ are strictly positive for all $i \in [L]$. Suppose for the sake of contradiction that $Q(s)$ has a root with a nonzero imaginary component: $s = a+bi$ where $b \neq 0$. Then
\begin{align*}
    Q(a+bi) & = \sum_{\ell = 1}^L d_i (a+bi +c_{\ell}) \prod_{j \neq \ell}(k_j(a+bi)+p_j)\\
    & = \left(\prod_j(k_j(a+bi)+p_j)\right)\sum_{\ell = 1}^L \frac{d_{\ell}(a + bi+c_{\ell})}{k_{\ell}(a+bi) + p_{\ell}}\\
    & = \left(\prod_j(k_j(a+bi)+p_j)\right)\sum_{\ell = 1}^L \left(\frac{d_{\ell}a + d_{\ell}c_{\ell}+d_{\ell}bi}{k_{\ell}a + p_{\ell} + k_{\ell}bi}\right)\left(\frac{k_{\ell}a + p_{\ell}-k_{\ell}bi}{k_{\ell}a+p_{\ell}-k_{\ell}bi}\right)\\
    & =\left(\prod_j(k_j(a+bi)+p_j)\right)\times \\ &\sum_{\ell = 1}^L \frac{d_{\ell}}{(k_{\ell}a+p_{\ell})^2 + (k_{\ell}b)^2}\left[\left((a +c_{\ell})(k_{\ell}a+p_{\ell}) + k_{\ell}b^2\right) + \left(-k_{\ell}bc_{\ell}+ bp_{\ell}\right)i\right]
\end{align*}
If $a+bi$ is a root of $Q$, then we need $b\sum_{\ell =1}^L \frac{d_{\ell}}{(k_{\ell}a+p_{\ell})^2 + (k_{\ell}b)^2}(-k_{\ell}c_{\ell} +p_{\ell})  = 0$. Notice that $-k_{\ell}c_{\ell} + p_{\ell} = -\frac{E_{2,\ell}^2}{\nu_{\ell}}$, which is strictly negative, so for $b \neq 0$, $\Im(Q(a+bi))<0$, which is a contradiction. Therefore, $b =0$, and all the roots of $Q$ are real and negative. Inverting the Laplace transform, we arrive at equation \eqref{eqn:SLS_PC_exact}.

\end{document}